\newcommand{\bS}{\mathbf{S}}
\newcommand{\LL}{\mathbb{L}}
\newcommand{\PP}{\mathbb{P}}
\newcommand{\RR}{\mathbb{R}}
\newcommand{\ZZ}{\mathbb{Z}}
\newcommand{\fE}{\mathfrak{E}}
\newcommand{\fG}{\mathfrak{G}}
\newcommand{\cO}{\mathcal{O}}
\newcommand{\cS}{\mathcal{S}}
\newcommand{\cT}{\mathcal{T}}
\newcommand{\cU}{\mathcal{U}}
\newcommand{\cD}{\mathcal{D}}
\newcommand{\cR}{\mathcal{R}}
\newcommand{\Db}{\mathrm{D}^{b}}
\DeclareMathOperator{\IGr}{IGr}
\DeclareMathOperator{\Gr}{Gr}
\DeclareMathOperator{\GL}{GL}
\DeclareMathOperator{\Sp}{Sp}
\newtheorem{lemma}{Lemma}[section]
\newtheorem{theorem}[lemma]{Theorem}
\newtheorem{proposition}[lemma]{Proposition}
\newtheorem{corollary}[lemma]{Corollary}
\theoremstyle{definition}
\newtheorem{definition}[lemma]{Definition}
\newtheorem{remark}[lemma]{Remark}
\newtheorem{example}[lemma]{Example}
\newtheorem{conjecture}[lemma]{Conjecture}
\title{On the Derived Category Of $\IGr(3,8)$}
\author{Lyalya Guseva}
\thanks{The author was partially supported by the Russian Academic Excellence Project `5-100' and the Moebius Contest Foundation for Young Scientists}
\address{Laboratory of Algebraic Geometry, National Research University Higher School of Economics,
Russian Federation}
\email{lyalya.guseva1994@gmail.com}
\begin{document}

\maketitle{}

\begin{abstract}
We construct a full exceptional collection of vector bundles in the bounded derived category of coherent sheaves
on the Grassmannian $\IGr(3,8)$ of isotropic 3-dimensional subspaces in a symplectic vector space of dimension~8.
\end{abstract}

\section{Introduction}
The bounded derived category of coherent sheaves is one of the most important invariants of an algebraic variety.
This is one of the reasons to investigate its structure.
In general, the structure of a triangulated category may be quite complicated.
However, there is an important case when it can be described fairly explicitly, namely the case when a triangulated category
possesses a full exceptional collection~$(E_{1}, E_{2}, \ldots, E_{m})$.
In this case every object of a triangulated category admits a unique
filtration with $i$-th subquotient being a direct sum of shifts of the objects $E_i$.
Therefore, an exceptional collection can be considered as a kind of basis for triangulated category.

The simplest example of a variety with a full exceptional collection is a projective space.
Beilinson~\cite{6}~in~1978 showed that the collection of line bundles $\mathcal{O},\mathcal{O}(1), \ldots, \mathcal{O}(n)$  on $\mathbb{P}^n$
is a full exceptional collection.
In 1988~ Kapranov~\cite{7} constructed full exceptional collections on Grassmannians and flag varieties of groups $\operatorname{SL}_{n}$
and on smooth quadrics.
It has been conjectured afterwards that:

\begin{conjecture} \label{conjecture}
If $G$ is a semisimple algebraic group over an algebraically closed field of characteristic~$0$ and~$P\subset G$ is a parabolic subgroup
then there is a full exceptional collection of vector bundles on~$G/P$.
\end{conjecture}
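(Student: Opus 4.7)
The plan is to reduce the question to representation theory via the equivalence between $G$-equivariant coherent sheaves on $G/P$ and representations of $P$, and to use the Borel--Weil--Bott theorem as the main cohomological tool controlling $\operatorname{Ext}$-groups between homogeneous bundles.

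First, I would look for candidate exceptional objects among the irreducible homogeneous bundles $\mathcal{E}^{\lambda}$ on $G/P$ corresponding to irreducible representations of the Levi quotient of $P$. Next, I would organize these into blocks compatible with twisting by the minimal ample line bundle $\cO(1)$ on $G/P$, aiming at a \emph{Lefschetz collection} of the form $\langle \mathcal{B}_{0}, \mathcal{B}_{1}(1), \ldots, \mathcal{B}_{k-1}(k-1)\rangle$ in the sense of Kuznetsov and Polishchuk, the length $k$ being dictated by the index of $G/P$.

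To verify semi-orthogonality I would compute, for each pair of summands, the complex $R\Gamma(G/P, (\mathcal{E}^{\lambda})^{\vee} \otimes \mathcal{E}^{\mu}(i))$. Borel--Weil--Bott reduces such a computation to a combinatorial check on the $\rho$-shifted action of the Weyl group on the weights appearing in the decomposition of the tensor product, and semi-orthogonality becomes the statement that certain weights are singular. This step is essentially mechanical once the blocks have been chosen correctly.

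The main obstacle is fullness. The cleanest strategy is to exhibit a resolution of the structure sheaf of the diagonal $\Delta \subset G/P \times G/P$ whose terms are external tensor products of the proposed exceptional objects, in the style of Beilinson and Kapranov, but there is no uniform construction of such a resolution for arbitrary $G/P$. An alternative is to argue inductively by presenting $G/P$ as a relative flag (often a Grassmannian or projective) bundle over a smaller homogeneous space for which the conjecture is already known, and assembling the collection from the base and the relative exceptional collection on the fiber. Both approaches are very sensitive to the type of $G$ and to the choice of $P$: the correct starting block $\mathcal{B}_{0}$ is rarely obvious, and naive candidates often fail to generate. This is precisely why the conjecture remains open in general and each new case --- such as $\IGr(3,8)$ treated in this paper --- requires its own careful construction and a dedicated fullness argument.
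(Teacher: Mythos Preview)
The statement you are addressing is a \emph{conjecture}, not a theorem, and the paper does not claim to prove it; it is presented as an open problem and the paper establishes only the single new case $(C_4,P_3)$, i.e., $\IGr(3,8)$. Your text correctly recognizes this at the end, but framing the whole thing as a ``proof proposal'' is misleading: what you have written is a survey of the general strategies (irreducible homogeneous bundles, Borel--Bott--Weil for semiorthogonality, Lefschetz block structure, Beilinson-type diagonal resolution or inductive fibration arguments for fullness), together with an honest admission that none of them is known to work uniformly. That is an accurate summary of the state of the art, not a proof.

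If the intent was to prove the conjecture, then the gap is the entire fullness step: you do not construct a resolution of the diagonal, you do not specify an inductive scheme that covers all $(G,P)$, and you correctly note that the choice of starting block is nonobvious and case-dependent. If instead the intent was to explain why the conjecture is plausible and how one attacks individual cases, then the content is fine but should not be labeled a proof. In either case, there is nothing in the paper's own treatment to compare against, since the paper simply records the conjecture and moves on to the specific case of $\IGr(3,8)$.
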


The conjecture easily reduces to the case when $G$ is a simple group and $P$ is its maximal parabolic subgroup, see~\cite[Section~1.2]{1}.
In the case of simple $G$ and maximal $P$
the conjecture is known to be true for the following series
(we use the Bourbaki indexing of simple roots):
\begin{itemize}
\item $G$ is of Dynkin type $A$ and any $P$, see \cite{7};
\item $G$ is of Dynkin type $B$ and $P$ corresponds to one of the first two simple roots, see~\cite{7,2};
\item $G$ is of Dynkin type $C$ and $P$ corresponds to one of the first two simple roots, see~\cite{6,2};
\item $G$ is of Dynkin type $D$ and $P$ corresponds to the first simple root, see~\cite{7};
\end{itemize}
and for the following sporadic cases:
\begin{itemize}
\item $(B_3,P_3)$, $(B_4,P_4)$, see~\cite{7,K06};
\item $(C_3,P_3)$, $(C_4,P_4)$, $(C_5,P_5)$, see~\cite{11,S01};
\item $(D_4,P_3)$, $(D_4,P_4)$, $(D_5,P_4)$, $(D_5,P_5)$, see~\cite{7,K06};
\item $(E_6,P_1)$, $(E_6,P_6)$, see~\cite{FM};
\item $(G_2,P_1)$, $(G_2,P_2)$, see~\cite{7,K06}.
\end{itemize}
Besides that, an exceptional collection of maximal possible length (equal to the rank of the Grothendieck group)
has been constructed on $G/P$ for all classical groups (i.e. for groups of Dynkin types $ABCD$)
and all their maximal parabolic subgroups, see~\cite{1};
however the fullness of these collections is not yet known.

In this work we discuss the first unknown case for the symplectic group.
This is the case $(C_4,P_3)$, where the group is the symplectic group $G=\mathrm{Sp}(8)$
and the parabolic subgroup corresponds to the third simple root.
The corresponding homogeneous space $\IGr(3,8)$ is the Grassmannian of 3-dimensional isotropic subspaces
in an 8-dimensional symplectic vector space.
The other cases of maximal parabolic subgroups of $\mathrm{Sp}(8)$,
i.e., $\IGr(1,8) \cong \PP^7$, $\mathrm{IGr}(2,8)$, and $\mathrm{IGr}(4,8)$, were established in \cite{6,2,11}.

The exceptional collection on $\IGr(3,8)$ that we construct is a \emph{Lefschetz collection}, \cite{2,K06,8}.
Recall that a Lefschetz exceptional collection with respect to a line bundle $\mathcal{L}$ is just
an exceptional collection which consists of several blocks,
each of them is a sub-block of the previous one twisted by $\mathcal{L}$, see Definition~\ref{def:lefschetz} for more details.
If all blocks are the same, the collection is called \emph{rectangular}.

Let $\cU$ denote the tautological sub-bundle on $\mathrm{IGr}(3,8)$.
Denote by $\fE$ and $\fE'$ the following collections of vector bundles on $\IGr(3,8)$:
\begin{align}
\label{eq:e}
\mathfrak{E}\hphantom{{}'}	&:=	
(\hphantom{\Sigma^{2,1}\cU^{\vee}(-1),{}}\mathcal{O},\cU^{\vee},S^2\cU^{\vee},\Lambda^{2}\cU^{\vee},\Sigma^{2,1}\cU^{\vee}),\\
\label{eq:e-prime}
\mathfrak{E}'	&:=	
(\Sigma^{2,1}\cU^{\vee}(-1),\mathcal{O},\cU^{\vee},S^2\cU^{\vee},\Lambda^{2}\cU^{\vee}\hphantom{{},\Sigma^{2,1}\cU^{\vee}}),
\end{align}
where $\Sigma^{2,1}\cU^\vee \cong (\cU^\vee \otimes S^2\cU^\vee)/S^3\cU^\vee$.
We will denote by $\fE(i)$ and $\mathfrak{E}'(i)$ the collections consisting of the corresponding
five vector bundles twisted by $\cO(i)$,
and in the same way the subcategories of $\Db(\IGr(3,8))$ generated by these.
We will also need a vector bundle $\Sigma^{3,1}\cU^\vee \cong (\cU^\vee \otimes S^3\cU^\vee)/S^4\cU^\vee$
and we denote by $\LL$ and $\RR$ the left and right mutation functors, see the precise definition in~Section~\ref{section:preliminaries}.


The main result of this article is the following theorem.

\begin{theorem}
\label{intro:main}
The objects
\begin{equation*}
T := (\mathbb{L}_{\mathfrak{E}}(\Sigma^{3,1}\cU^{\vee}))[-3]
\qquad\text{and}\qquad
F := \RR_{\Sigma^{2,1}\cU^\vee(-1)}(T)
\end{equation*}
are equivariant vector bundles on $\IGr(3,8)$.

The collections of $32$ vector bundles  on $\IGr(3,8)$
\begin{equation*}
\begin{array}{lllllllll}
F,& \mathfrak{E}\hphantom{{}'},& F(1), & \mathfrak{E}\hphantom{{}'}(1),& \mathfrak{E}\hphantom{{}'}(2), &
\mathfrak{E}\hphantom{{}'}(3),& \mathfrak{E}\hphantom{{}'}(4),& \mathfrak{E}\hphantom{{}'}(5), & \qquad\text{and}\\
T,& \mathfrak{E}',& T(1), & \mathfrak{E}'(1),& \mathfrak{E}'(2),& \mathfrak{E}'(3),& \mathfrak{E}'(4),& \mathfrak{E}'(5)
\end{array}
\end{equation*}
are full Lefschetz collections with respect to the line bundle $\cO(1)$.

The collections of $32$ vector bundles  on $\IGr(3,8)$
\begin{equation*}
\begin{array}{lllllllll}
F,& \mathfrak{E}\hphantom{{}'},& \mathfrak{E}\hphantom{{}'}(1),& \mathfrak{E}\hphantom{{}'}(2),&
F(3),& \mathfrak{E}\hphantom{{}'}(3),& \mathfrak{E}\hphantom{{}'}(4),& \mathfrak{E}\hphantom{{}'}(5), & \qquad\text{and} \\
T,& \mathfrak{E}',& \mathfrak{E}'(1),& \mathfrak{E}'(2),& T(3),& \mathfrak{E}'(3),& \mathfrak{E}'(4),& \mathfrak{E}'(5)
\end{array}
\end{equation*}
are full rectangular Lefschetz collections with respect to the line bundle~$\cO(3)$.
\end{theorem}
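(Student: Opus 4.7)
The rank of the Grothendieck group $K_0(\IGr(3,8))$ equals the number of Schubert cells, namely $|W(C_4)|/|W_{P_3}| = 384/12 = 32$, matching the length of the claimed collections. Consequently, once exceptionality is verified, fullness reduces to generation of $\Db(\IGr(3,8))$. All morphism and extension groups between the equivariant bundles appearing in the statement decompose into sums of cohomologies $H^\bullet(\IGr(3,8), \Sigma^\alpha\cU^\vee \otimes \cO(k))$, each of which is computable by the Borel-Weil-Bott theorem for the symplectic group.

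\textbf{Step 1: the starting block $\fE$, and construction of $T$ and $F$.} First, I would establish, by direct Borel-Weil-Bott computation, that $(\fE, \fE(1), \ldots, \fE(5))$ is an exceptional collection, giving the Lefschetz base of length~$5$. Next, I would compute $\Ext^\bullet(\fE, \Sigma^{3,1}\cU^\vee)$; the crucial (and expected) phenomenon is that this graded group is concentrated in cohomological degree~$3$, so the defining triangle for $\LL_\fE(\Sigma^{3,1}\cU^\vee)$ collapses after the shift $[-3]$ into a short exact sequence of equivariant sheaves, identifying $T$ explicitly as a vector bundle. An analogous computation of $\Ext^\bullet(\Sigma^{2,1}\cU^\vee(-1), T)$, again concentrated in a single degree, yields $F$ as a vector bundle. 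A by-product is an explicit equivariant resolution of each of $T$ and $F$ in terms of Schur functors of $\cU^\vee$ and the symplectic representation $\Lambda^2 V_8$.

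\textbf{Step 2: exceptionality and Lefschetz structure.} With $\fE$, $T$ and $F$ in hand, the remaining semi-orthogonality conditions for the $\cO(1)$-Lefschetz collections, together with the block inclusion $\langle \fE \rangle \subset \langle F, \fE \rangle$, reduce via the defining triangles of $T$ and $F$ to Ext computations of the type already handled. The rectangular $\cO(3)$-Lefschetz collections involve the same subcategories of $\Db(\IGr(3,8))$ but with objects rearranged into two equal blocks of size~$16$, namely $(F, \fE, \fE(1), \fE(2))$ and its twist by $\cO(3)$; once the $\cO(1)$-version is in place, the rectangular statement follows from a further finite list of Borel-Weil-Bott vanishings certifying exceptionality of the rearranged order.

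\textbf{Step 3: fullness — the main obstacle.} Since the length already matches the rank of $K_0$, fullness is equivalent to generation. The plan is to embed $i\colon \IGr(3,8) \hookrightarrow \Gr(3,8)$ as the zero locus of a regular section of $\Lambda^2\cU^\vee$, restrict Kapranov's full exceptional collection from $\Gr(3,8)$, and use the Koszul resolution of $i_*\cO$ to obtain a generating set on $\IGr(3,8)$. Each such generator must then be expressed as an iterated extension of objects in the claimed collection, which amounts to writing down a finite but delicate list of equivariant short exact sequences built from tautological relations and Koszul-type complexes. An alternative and often more manageable route uses the isotropic two-step flag variety $\mathrm{IFl}(2,3;8)$, which is a projective bundle over $\IGr(2,8)$: pulling back the full collection on $\IGr(2,8)$ from~\cite{2} and resolving along the fibres via a relative Kapranov construction yields a generating collection that can be mutated into the one asserted in the theorem. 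The principal difficulty on either route is the combinatorial bookkeeping of these mutations and the verification that they match the specific objects $T$ and $F$.
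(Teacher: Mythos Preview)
Your Step~1 contains a factual error that undermines the construction. The groups $\mathrm{Ext}^\bullet(E,\Sigma^{3,1}\cU^\vee)$ for $E\in\fE$ are \emph{not} concentrated in degree~$3$; Borel--Weil--Bott places them entirely in degree~$0$, and they are large (for instance $\mathrm{Hom}(\cU^\vee,\Sigma^{3,1}\cU^\vee)\cong (S^2V\otimes V)/V$ and $\mathrm{Hom}(\cO,\Sigma^{3,1}\cU^\vee)\cong \Sigma^{3,1}V/S^2V$). Hence the iterated mutation $\LL_\fE(\Sigma^{3,1}\cU^\vee)$ does not arise from a single cone in a shifted degree, and one cannot read off directly that $T$ is a sheaf. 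The paper instead constructs $T$ as the totalization of an explicit $\Sp(V)$-equivariant bicomplex whose two rows are the restrictions to $\IGr(3,8)$ of staircase complexes on $\Gr(3,8)$ (for $\Sigma^{3,1}\cU^\vee$ and for $S^2\cU^\vee$), joined by a uniquely determined equivariant morphism of complexes. That $T$ is a vector bundle is a spectral-sequence computation on this bicomplex; the identification $T\cong\LL_\fE(\Sigma^{3,1}\cU^\vee)[-3]$ is verified \emph{afterwards} by showing $T\in\fE^\perp$, using the quasi-isomorphic left-hand truncation of the same bicomplex. A closely related omission is the identity $\LL_{\fE'(-2),\fE'(-1)}(T)\cong T(-2)[4]$: this is exactly what links the two placements of $T$ (and of $F$) in the collections and is needed to pass between the $\cO(1)$- and $\cO(3)$-Lefschetz forms; its proof again uses a further truncation of the bicomplex and does not reduce to an extra list of Borel--Weil--Bott vanishings as you suggest.

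Your Step~3 route through $\mathrm{IFl}(2,3;8)$ is indeed the one the paper takes, but the mechanism is not mutation into the target collection. One fixes a specific full collection on the flag variety (Orlov's formula over $\IGr(2,8)$ applied to a twist of the known Lefschetz collection there), pushes every object forward to $\IGr(3,8)$ via relative Borel--Weil--Bott, and checks that all but six of the pushforwards already lie in the subcategory $\cD$ generated by the claimed collection, while the six exceptions land in $\fE(6)$. The Serre-duality observation ${}^\perp\fE\cap\fE(6)=0$ then forces ${}^\perp\cD=0$.
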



Descriptions of the vector bundles $T$ and $F$ can be found in Lemma~\ref{T_1} and Remark~\ref{remark:f} respectively.
In particular, the bundle $F$ is isomorphic to a twist of the vector bundle $\mathcal{E}^{2,0,0;1}$ constructed in~\cite{1}.

A significant part of the proof of Theorem~\ref{intro:main} is based on the study of a certain interesting bicomplex
{\tiny\begin{equation*}
\label{eq:first-left}
\vcenter{\xymatrix@C=.9em{
&
0 \ar[r] &
\Sigma^{3,2}\cU^{\vee}(-3) \ar[r] &
V\otimes \Sigma^{2,1}\cU^{\vee}(-2) \ar[r] &
\Lambda^2 V\otimes\cU^{\vee}(-1) \ar[r] &
\Lambda^4V\otimes\mathcal{O} \ar[r] &
\Lambda^2V\otimes \Lambda^2\cU^{\vee} \ar[r] &
V\otimes \Sigma^{2,1}\cU^{\vee} \ar[r] &
\Sigma^{3,1}\cU^{\vee} \ar[r] &
0
\\
0 \ar[r] &
\Sigma^{3,3}\cU^{\vee}(-4) \ar[r] \ar[u] &
V\otimes\Sigma^{2,2}\cU^{\vee}(-3) \ar[u] \ar[r] &
\Lambda^2 V\otimes \Lambda^2\cU^{\vee}(-2) \ar[u] \ar[r] &
\Lambda^3 V\otimes \cO(-1) \ar[u] \ar[r] &
\Lambda^2V\otimes\mathcal{O} \ar[u] \ar[r] &
V\otimes \cU^{\vee}\ar[u] \ar[r] &
S^2\cU^{\vee} \ar[u] \ar[r] &
0, \ar[u]
}}
\end{equation*}}%
of vector bundles on $\IGr(3,8)$, where $V$ is the tautological 8-dimensional representation of $\operatorname{Sp}(8)$.
This bicomplex is $\operatorname{Sp}(8)$-equivariant, its lines are exact and are obtained as the restrictions
of the so-called staircase complexes (see~\cite{4}) from $\Gr(3,8)$.
The vector bundle $T$ is identified with the cohomology of the truncation~\eqref{eq:first-right} or~\eqref{eq:first-left} of this bicomplex,
and using the bicomplex we prove an isomorphism
\begin{equation*}
\mathbb{L}_{\mathfrak{E}'(1),\mathfrak{E}'(2)}(T(3))=T(1)[4],
\end{equation*}
which is crucial for the proof of completeness of the above exceptional collections.
We want to stress that this part of the argument is similar to the one used in~\cite{2} in the case of $\IGr(2,2n)$;
so it seems likely that an analogous construction can be used for other homogeneous varieties.

To prove the fullness of the exceptional collections in Theorem~\ref{intro:main} we first prove that some special objects
lie in the subcategory $\mathcal{D}$ of $\mathrm{D}^b(\mathrm{IGr}(3,8))$ generated by each of these collections.
After that we consider the isotropic flag variety $\mathrm{IFl}(2,3;8)$ with its two projections
\begin{equation*}
\xymatrix{
& \mathrm{IFl}(2,3;8) \ar[dl] \ar[dr]
\\
\IGr(2,8) &&
\IGr(3,8)
}
\end{equation*}
The first arrow is a $\PP^3$-fibration.
Using a certain variant of the Lefschetz exceptional collection on $\IGr(2,8)$ from~\cite{2} and Orlov's projective bundle formula
we construct a very special full exceptional collection on~$\mathrm{IFl}(2,3;8)$.
The main property of this exceptional collection is that the pushforwards along the second arrow (which is a $\PP^2$-fibration)
of almost all objects constituting it are contained in the subcategory~$\cD$, and for the few objects that do not enjoy this property,
the pushforwards are contained in the subcategory~$\fE(6) \subset \Db(\IGr(3,8))$.
It follows from this that every object of $\Db(\IGr(3,8))$ contained in the orthogonal~${}^\perp\cD$ to the subcategory $\cD$,
belongs to $\fE(6)$.
The trivial observation
\begin{equation*}
{}^\perp \fE \cap \fE(6) = 0
\end{equation*}
(that follows immediately from the Serre duality on $\IGr(3,8)$) then shows that ${}^\perp\cD = 0$,
and completes the proof of the fullness of the collections.


The work is organized as follows.
In Sections~\ref{section:preliminaries} and~\ref{section:bbw} we collect some preliminary results
from the theory of derived categories and equivariant vector bundles on Grassmannians.
In  Section~\ref{section:db} we prove vanishing lemmas that are essential for the proof of exceptionality and fullness of the constructed collection.
In Section~\ref{section:exact} we collect some important exact sequences and construct the bicomplex discussed above.
Also in this section we prove some important properties of this bicomplex.
In Section~\ref{section:non-rectangular} we construct vector bundles $F$ and $T$
and prove exceptionality of the collection of Theorem~\ref{intro:main}.
In Section~\ref{section:fullness} we give a proof of fullness of the constructed collection.
Finally, in Section~\ref{section:applications} we provide a couple of applications of our results:
compute the residual category of $\IGr(3,8)$ as defined in~\cite{KS},
and construct a pair of (fractional) Calabi--Yau categories related to a half-anticanonical section
and anticanonical double covering of $\IGr(3,8)$.

{\bf Acknowledgements:}
I would like to thank my advisor, Alexander Kuznetsov, for suggesting this problem as well as for his patience and constant support, and Anton Fonarev for useful comments on the draft of this paper.
%

\section{Preliminaries}
\label{section:preliminaries}

Let $\Bbbk$ be an algebraically closed field of characteristic zero and let $\mathcal{T}$ be a $\Bbbk$-linear triangulated category.
We start by recalling some basic definitions.

\begin{definition}
A sequence of full triangulated subcategories $\mathcal{A}_1, \ldots ,\mathcal{A}_m \in \mathcal{T}$ is \textbf{semiorthogonal}
if for all $0\le i {}<{} j\le m$ and all $G\in \mathcal{A}_i$, $H\in \mathcal{A}_j$ one has $\mathrm{Hom}_{\mathcal{T}} (H,G) = 0$.
Let $\langle \mathcal{A}_1, \ldots ,\mathcal{A}_m\rangle$ denote
the smallest full triangulated subcategory in $\mathcal{T}$ containing all $\mathcal{A}_i$.
If $\langle \mathcal{A}_1, \ldots ,\mathcal{A}_m \rangle = \mathcal{T}$,
we say that the subcategories $\mathcal{A}_i$ form a \textbf{semiorthogonal decomposition} of $\mathcal{T}$.
\end{definition}

\begin{definition}
An object $E$ of  $\mathcal{T}$ is \textbf{exceptional} if $\mathrm{Ext}^{\bullet}(E, E) = \Bbbk$
(that is, $E$ is simple and has no non-trivial self-extensions).
\end{definition}

If $E$ is exceptional, the minimal triangulated subcategory $\langle E \rangle$ of $\cT$ containing $E$
is equivalent to~$\Db(\Bbbk)$, the bounded derived category of $\Bbbk$-vector spaces,
via the functor $\Db(\Bbbk) \to \cT$ that takes a graded vector space $V$ to $V \otimes E \in \cT$.

\begin{definition}
A sequence of objects $E_{1},\ldots, E_{m}$ in  $\mathcal{T}$ is an \textbf{exceptional collection}
if each $E_{i}$ is exceptional and $\mathrm{Ext}^{\bullet}(E_{i}, E_{j} ) = 0$ for all $i > j$.
A collection $(E_{1},E_{2},\ldots ,E_{m})$ is \textbf{full} if the minimal triangulated subcategory of $\mathcal{T}$
containing $(E_{1},E_{2},\ldots, E_{m})$ coincides with $\mathcal{T}$.
\end{definition}

For an exceptional object $E \in \cT$ we denote by $\LL_E$ and $\RR_E$ the \textbf{left} and \textbf{right mutation} functors through $E$,
which are defined as taking an object $G \in \cT$ to
\begin{equation*}
\mathbb{L}_{E}(G):=\mathrm{Cone}(\mathrm{Hom}^{\bullet}(E, G)\otimes E \to G),
\qquad\text{and}\qquad
\mathbb{R}_{E}(G):= \mathrm{Cone}(G \to \mathrm{Hom}^{\bullet}(G,E)^{\vee} \otimes E)[-1],
\end{equation*}
where the morphisms are given by the evaluation and coevaluation, respectively.


It is well known (see \cite{3}) that if $(E,E')$ is an exceptional pair then $(E',\mathbb{R}_{E'}(E))$ and $(\mathbb{L}_{E}(E'),E)$
are also exceptional pairs each of which generates the same subcategory in $\mathcal{T}$ as the initial pair $(E,E').$

More generally, if $(E_1, \ldots , E_m)$ is an exceptional collection of arbitrary length in $\mathcal{T}$ then
one can define the left and right mutations of an object $E\in \mathcal{T}$ through the category $\langle E_1, \ldots , E_n\rangle$
as the compositions of the corresponding mutations through the generating objects:
\begin{equation*}
\LL_{\langle E_1, \ldots , E_m\rangle} = \LL_{E_1} \circ \ldots \circ \LL_{E_m},
\qquad
\RR_{\langle E_1, \ldots , E_m\rangle} = \RR_{E_m} \circ \ldots \circ \RR_{E_1}.
\end{equation*}


\begin{proposition}[\cite{3}]
\label{proposition:mutations}
The functors of left and right mutations through an exceptional collection
induce mutually inverse equivalences of the left and the right orthogonals to the collection:
\begin{equation*}
\xymatrix@1@C=7em{
{}^\perp\langle E_1, \ldots , E_m\rangle \ar@<.5ex>[r]^{\LL_{\langle E_1, \ldots , E_m\rangle}} &
\langle E_1, \ldots , E_m\rangle^\perp \ar@<.5ex>[l]^{\RR_{\langle E_1, \ldots , E_m\rangle}}
}
\end{equation*}
Mutation of a \textup(full\textup) exceptional collection is a \textup(full\textup) exceptional collection.
\end{proposition}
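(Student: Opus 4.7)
The plan is to reduce the assertion to the case of a single exceptional object $E$, since mutation through a collection $(E_1, \ldots, E_m)$ is defined as the composition $\LL_{E_1} \circ \cdots \circ \LL_{E_m}$ (respectively $\RR_{E_m} \circ \cdots \circ \RR_{E_1}$) of one-object mutations; the general statement then follows by induction, provided the orthogonality conditions propagate through each step.

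For a single exceptional $E$, I would first establish the ``landing lemma'': $\LL_E$ sends $\mathcal{T}$ into $\langle E\rangle^\perp$ and $\RR_E$ sends $\mathcal{T}$ into ${}^\perp\langle E\rangle$. Applying $\mathrm{Hom}^{\bullet}(E, -)$ to the defining triangle $\mathrm{Hom}^{\bullet}(E, G)\otimes E \to G \to \LL_E(G)$ and using $\mathrm{Hom}^{\bullet}(E, E) = \Bbbk$ identifies the first map with the identity on $\mathrm{Hom}^{\bullet}(E, G)$, forcing $\mathrm{Hom}^{\bullet}(E, \LL_E(G)) = 0$; the statement for $\RR_E$ is dual.

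Next I would show that $\LL_E$ and $\RR_E$ restrict to mutually inverse equivalences between ${}^\perp\langle E\rangle$ and $\langle E\rangle^\perp$. For $G \in {}^\perp\langle E\rangle$, applying $\mathrm{Hom}^{\bullet}(-, E)$ to the defining triangle of $\LL_E(G)$ together with $\mathrm{Hom}^{\bullet}(G, E) = 0$ gives $\mathrm{Hom}^{\bullet}(\LL_E(G), E)^{\vee} \otimes E \cong \mathrm{Hom}^{\bullet}(E, G) \otimes E[1]$. Substituting this into the defining triangle of $\RR_E(\LL_E(G))$ yields a distinguished triangle whose last two terms coincide with those of the rotation of the triangle defining $\LL_E(G)$, and the triangulated-category axioms then identify $\RR_E(\LL_E(G)) \cong G$. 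The composition $\LL_E \circ \RR_E$ on $\langle E\rangle^\perp$ is handled symmetrically. To bootstrap to the collection case, I would observe that if $X \in {}^\perp\langle E_i\rangle$ for some $i < m$, then $\mathrm{Hom}^{\bullet}(\LL_{E_m}(X), E_i) = 0$, because applying $\mathrm{Hom}^{\bullet}(-, E_i)$ to the triangle defining $\LL_{E_m}(X)$ introduces a factor $\mathrm{Hom}^{\bullet}(E_m, E_i) = 0$ by semiorthogonality of the original collection.

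For part (ii), after mutating an adjacent pair $(E_i, E_{i+1})$ to $(\LL_{E_i}(E_{i+1}), E_i)$, the object $\LL_{E_i}(E_{i+1})$ is exceptional because the equivalence of part (i) preserves $\mathrm{Ext}$-algebras, and $\langle \LL_{E_i}(E_{i+1}), E_i\rangle = \langle E_i, E_{i+1}\rangle$ by construction, so semiorthogonality against every other $E_j$ is retained (as one verifies by applying $\mathrm{Hom}^{\bullet}(E_j, -)$ or $\mathrm{Hom}^{\bullet}(-, E_j)$ to the defining triangle of $\LL_{E_i}(E_{i+1})$), and fullness is preserved because the subcategory generated by the full collection is unchanged. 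The main obstacle I anticipate is the triangle-comparison step in part (i): to conclude that the third vertices of the two distinguished triangles are canonically isomorphic, one must verify that the connecting morphisms agree, which requires careful tracking of the canonical evaluation and coevaluation maps; the remaining arguments are essentially formal manipulations with the axioms of triangulated categories.
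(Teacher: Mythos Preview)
The paper does not supply its own proof of this proposition: it is stated with a citation to Bondal~\cite{3} and no argument is given. Your proposal is essentially the standard proof found in that reference, and the outline is correct, including your identification of the one genuinely delicate point (matching the coevaluation morphism in the triangle for $\RR_E(\LL_E(G))$ with the rotated connecting morphism of the $\LL_E$-triangle). The inductive bootstrap is also handled correctly: you need both that $\LL_{E_m}(X)$ remains in ${}^\perp\langle E_i\rangle$ for $i<m$ (which you check) and that subsequent mutations $\LL_{E_{m-1}}$ preserve membership in $\langle E_m\rangle^\perp$ (which follows by the same triangle argument using $\mathrm{Hom}^\bullet(E_m,E_{m-1})=0$); you might make this second half of the propagation explicit.
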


Let $X$ be a smooth projective algebraic variety over a field $\Bbbk$.
We denote by $\mathrm{D}^{b}(X)$ the bounded derived category of coherent sheaves on $X$.
The canonical line bundle of~$X$ is denoted by $\omega_X$.
The following result is useful when dealing with exceptional collections on $X$.

\begin{proposition}
\label{proposition:long-mutations}
If $(E_1, E_2, \ldots, E_{m-1}, E_m)$ is an exceptional collection in $\mathrm{D}^{b}(X)$ then
\begin{equation*}
(E_2, \ldots, E_{m-1}, E_m, E_1 \otimes \omega_X^{-1})
\qquad\text{and}\qquad
(E_m \otimes \omega_X, E_1, E_2, \ldots, E_{m-1})
\end{equation*}
are exceptional collections too.
If one of these collections is full then so are the others.
\end{proposition}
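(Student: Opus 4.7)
The plan is to reduce both statements to Serre duality on $X$. Writing $n = \dim X$, Serre duality takes the form
\[
\mathrm{Ext}^k(A,B) \cong \mathrm{Ext}^{n-k}(B,A\otimes\omega_X)^\vee,
\]
equivalently asserting that the Serre functor on $\Db(X)$ is $S = (-)\otimes\omega_X[n]$. The two collections in the statement are obtained respectively by right-rotating the first block and left-rotating the last block of $(E_1,\ldots,E_m)$, so I will treat the first in detail and handle the second by the analogous mirror argument.

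For exceptionality of the first collection $(E_2,\ldots,E_m, E_1\otimes\omega_X^{-1})$, the subsequence $(E_2,\ldots,E_m)$ is unchanged, and $E_1\otimes\omega_X^{-1}$ is exceptional because tensoring by a line bundle is an autoequivalence. The only nontrivial check is the semiorthogonality $\mathrm{Ext}^\bullet(E_1\otimes\omega_X^{-1}, E_i) = 0$ for $2\le i\le m$, which follows from
\[
\mathrm{Ext}^k(E_1\otimes\omega_X^{-1}, E_i) \cong \mathrm{Ext}^{n-k}(E_i, E_1)^\vee
\]
and the original exceptionality ($i>1$). The symmetric computation $\mathrm{Ext}^k(E_i, E_m\otimes\omega_X) \cong \mathrm{Ext}^{n-k}(E_m, E_i)^\vee = 0$ for $i<m$ handles the second collection.

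For fullness, assume $\Db(X) = \langle E_1,\ldots,E_m\rangle$. I view this as the semiorthogonal decomposition $\Db(X) = \langle \langle E_1\rangle, \cA\rangle$ with $\cA = \langle E_2,\ldots,E_m\rangle$, so that ${}^\perp\cA = \langle E_1\rangle$. A direct application of Serre duality gives the general identity $\mathcal{C}^\perp = S^{-1}({}^\perp\mathcal{C})$ for any admissible subcategory $\mathcal{C}$; applied to $\mathcal{C} = \cA$ this shows $\cA^\perp = S^{-1}\langle E_1\rangle$, so
\[
\Db(X) = \langle \cA, \cA^\perp\rangle = \langle E_2,\ldots,E_m, E_1\otimes\omega_X^{-1}\rangle,
\]
where the last equality uses that the shift by $[-n]$ introduced by $S^{-1}$ does not affect the subcategory generated. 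The mirror rotation applied to $\Db(X) = \langle\langle E_1,\ldots,E_{m-1}\rangle, \langle E_m\rangle\rangle$, using the dual identity ${}^\perp\mathcal{C} = S(\mathcal{C}^\perp)$, yields the second fullness statement. I do not expect any technical obstacles: the argument is entirely formal, being the classical Serre functor rotation for semiorthogonal decompositions applied to rank-one blocks.
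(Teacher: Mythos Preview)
Your proof is correct and follows essentially the same approach as the paper: the exceptionality is deduced directly from Serre duality, and the fullness from the Serre-functor rotation of a semiorthogonal decomposition. The paper simply cites Bondal's theorem for the fullness part rather than unwinding the identity $\mathcal{C}^\perp = S^{-1}({}^\perp\mathcal{C})$ explicitly, but this is exactly the content of that result; you might also remark that the implication is reversible (each rotation is undone by the other), so that fullness of any one of the three collections gives fullness of the rest.
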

\begin{proof}
The first part follows easily from Serre duality.
The second part is~\cite[Theorem 4.1]{3}.
\end{proof}

%


The next definition is also quite useful.

\begin{definition}[\cite{8,2}]
\label{def:lefschetz}
Let $\mathcal{L}$ be a line bundle on $X$.
\begin{enumerate}
\item A \textbf{Lefschetz collection} in $\mathrm{D}^{b}(X)$ with respect to a line bundle
$\mathcal{L}$ is an exceptional collection of objects of $\mathrm{D}^{b}(X)$ which has a block structure
\begin{equation*}
\underbrace{E_{1}, E_{2},\ldots ,E_{\lambda_{0}}}_{\mbox{block $1$}},
\underbrace{E_{1}\otimes\mathcal{L}, E_{2}\otimes\mathcal{L},\ldots,E_{\lambda_{1}}\otimes \mathcal{L}}_{\mbox{block $2$}},\ldots,
\underbrace{E_{1}\otimes \mathcal{L}^{\otimes{i-1}}, E_{2}\otimes \mathcal{L}^{\otimes i-1},\ldots,E_{\lambda_{i-1}}\otimes \mathcal{L}^{\otimes i-1}}_{\mbox{block $i$}} ,
\end{equation*}
where $\lambda = (\lambda_{0}\ge \lambda_{1}\ge \ldots \ge \lambda_{i-1} > 0)$ is a non-increasing sequence of positive integers that is called the \textbf{support partition} of the Lefschetz collection.
\item If $\lambda_{0}= \lambda_1 = \dots = \lambda_{i-1}$, then the corresponding Lefschetz collection is called \textbf{rectangular}. Otherwise, its \textbf{rectangular part} is the subcollection
    $$E_{1}, E_{2},\ldots,E_{\lambda_{i-1}},E_{1}\otimes\mathcal{L}, E_{2}\otimes\mathcal{L},\ldots,E_{\lambda_{i-1}}\otimes\mathcal{L},\ldots,E_{1}\otimes \mathcal{L}^{\otimes i-1}, E_{2}\otimes \mathcal{L}^{\otimes i-1},\ldots,E_{\lambda_{i-1}}\otimes \mathcal{L}^{\otimes i-1}.$$
\end{enumerate}
\end{definition}

We should point out that being Lefschetz is not a property of an exceptional collection, but rather a structure expressed as the block decomposition.

\begin{example} \label{14}
Now let us give several examples of Lefschetz exceptional collections:
\begin{enumerate}
\item Any exceptional collection can be considered as a $1$-block Lefschetz collection.
\item For any $d>0$ the standard exceptional collection $(\cO_{\mathbb{P}^{n}},\cO_{\mathbb{P}^{n}}(1), \ldots ,\cO_{\mathbb{P}^{n}}(n))$
on $\mathbb{P}^{n}$ is Lefschetz with respect to $\cO_{\mathbb{P}^{n}}(d)$ with support partition
\begin{equation*}
\lambda = (\underbrace{d,d,\ldots,d}_{q},r),
\end{equation*}
where $q$ and $r$ are defined from the equality $n + 1 = qd + r$ with $0<r\le d$.

\item The Lefschetz exceptional collection on the isotropic Grassmannian $X=\mathrm{IGr}(2,V)$ of two-dimensional subspaces
in a symplectic vector space $V$ was constructed in \cite{2}.
If the dimension of~$V$ is equal to $2m$ then the first block of this collection looks like
    $$(\cO_{X},\cU^{\vee},S^2\cU^{\vee}, \ldots, S^{m-1}\cU^{\vee}),$$
where $\cU$ is the tautological bundle, and its support partition is
    $$ \lambda = (\underbrace{m,m,\ldots,m}_{m-1},\underbrace{m-1,m-1, \ldots, m-1}_{m}).$$

\end{enumerate}
\end{example}

Also we will need the following theorem.
\begin{theorem}[Orlov’s projectivization formula, \cite{9}] \label{Orlov}
Let $\mathcal{E}$ be a vector bundle on $X$ of rank n.
Let~$\pi \colon \mathbb{P}(\mathcal{E})\to X$ be the projectivization of $\mathcal{E}$, and let $\cO(1)$ denote
the Grothendieck invertible sheaf.
Then for each $i \in \mathbb{Z}$ there is a semiorthogonal decomposition
\begin{equation*}
\mathrm{D}^{b}(\mathbb{P}(\mathcal{E})) =
\langle \pi^*\mathrm{D}^{b}(X) \otimes \cO(i), \pi^*\mathrm{D}^{b}(X) \otimes \cO(i + 1),
\ldots \pi^*\mathrm{D}^{b}(X) \otimes \cO(i + n - 1) \rangle.
\end{equation*}
\end{theorem}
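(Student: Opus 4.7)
The plan is to prove the three standard ingredients of a semiorthogonal decomposition in turn: fully faithfulness of each block functor $\Phi_j := \pi^*(-)\otimes\cO(j)$, semiorthogonality of the blocks, and generation. All three rest on a single computation, namely the relative cohomology of $\cO(m)$ along $\pi$. By the standard projective-bundle computation I would first establish
\begin{equation*}
R\pi_*\cO(m) \;=\;
\begin{cases}
S^m\mathcal{E}^\vee, & m\ge 0,\\
0, & -n+1\le m\le -1,\\
S^{-m-n}\mathcal{E}\otimes\det\mathcal{E}^\vee[-(n-1)], & m\le -n,
\end{cases}
\end{equation*}
which can be proved fibrewise (reducing to the classical $\PP^{n-1}$ case) combined with flat base change, or by induction on $n$ using the relative Euler sequence
\begin{equation*}
0\to\Omega_{\pi}(1)\to\pi^*\mathcal{E}\to\cO(1)\to 0.
\end{equation*}

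With this in hand, fully faithfulness of $\Phi_j$ is immediate: for $F,G\in\Db(X)$, the projection formula and $R\pi_*\cO=\cO_X$ give $R\pi_*R\mathcal{H}om(\pi^*F\otimes\cO(j),\pi^*G\otimes\cO(j))=R\mathcal{H}om(F,G)$, and taking global sections yields $\mathrm{Hom}(\Phi_j F,\Phi_j G)=\mathrm{Hom}(F,G)$. Semiorthogonality of the blocks for $i\le k<j\le i+n-1$ follows in exactly the same way: by adjunction the relevant $\mathrm{Hom}$ group equals $\mathrm{Hom}_X(F,G\otimes R\pi_*\cO(k-j))$, and since $k-j\in[-(n-1),-1]$ the middle range of the cohomology computation gives zero.

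For generation, which is the substantive step, I would use the relative Beilinson resolution of the diagonal. On the fibre product $Y:=\PP(\mathcal{E})\times_X\PP(\mathcal{E})$ with projections $p_1,p_2$, the composition $p_1^*\Omega_\pi(1)\hookrightarrow q^*\mathcal{E}\twoheadrightarrow p_2^*\cO(1)$ (where $q=\pi\circ p_1=\pi\circ p_2$) is a section of $p_1^*\Omega_\pi(1)^\vee\otimes p_2^*\cO(1)$ whose zero locus is precisely the relative diagonal $\Delta\subset Y$, and is regular of codimension $n-1$. The associated Koszul complex is a resolution
\begin{equation*}
0\to p_1^*\Lambda^{n-1}\Omega_\pi(1)\otimes p_2^*\cO(-(n-1))\to\cdots\to p_1^*\Omega_\pi(1)\otimes p_2^*\cO(-1)\to\cO_Y\to\cO_\Delta\to 0.
\end{equation*}
Convolving an arbitrary $\mathcal{F}\in\Db(\PP(\mathcal{E}))$ with this resolution via the Fourier--Mukai kernel $\cO_\Delta$ (which gives back $\mathcal{F}$) exhibits $\mathcal{F}$ as an iterated extension of objects of the form $\pi^*G_k\otimes\cO(-k)$ with $k=0,\dots,n-1$, where $G_k=R\pi_*(\mathcal{F}\otimes\Lambda^k\Omega_\pi(k))\in\Db(X)$. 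This shows the collection generates for the choice $i=-(n-1)$; a general $i$ follows by tensoring with $\cO(i+n-1)$, or equivalently by iterated use of Proposition~\ref{proposition:long-mutations} since $\cO(n)\cong\omega_\pi^{-1}\otimes\pi^*\det\mathcal{E}$ realizes a shift by the relative canonical class.

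The main obstacle is the construction and exactness of the relative Koszul resolution of $\Delta$: one has to verify that the section is indeed regular (which reduces to the fibrewise case $\PP^{n-1}\times\PP^{n-1}$, a classical Beilinson input) and rewrite the convolution carefully so that the output lands in the advertised subcategories. Once this is in place, the rest of the argument is formal manipulation of adjunctions and the projection formula, and the decomposition in the stated form for arbitrary $i$ drops out.
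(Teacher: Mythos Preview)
Your argument is correct and is essentially Orlov's original proof: compute $R\pi_*\cO(m)$, deduce full faithfulness and semiorthogonality by adjunction and the projection formula, and obtain generation from the relative Beilinson--Koszul resolution of the diagonal. Note, however, that the paper does not give a proof of this statement at all; it is quoted as a known result from~\cite{9}, so there is no in-paper argument to compare against.

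One small notational point: in the Koszul terms you write $\Lambda^{k}\Omega_\pi(1)$ and later $\Lambda^{k}\Omega_\pi(k)$; to avoid ambiguity you should make clear that you mean $\Lambda^{k}\big(\Omega_\pi(1)\big)=\Omega_\pi^{k}(k)$ throughout. Otherwise the sketch is complete and the ``obstacle'' you flag (regularity of the section cutting out the diagonal) is indeed routine, reducing fibrewise to the classical Beilinson computation on $\PP^{n-1}\times\PP^{n-1}$.
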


\section{The Borel--Bott--Weil theorem}
\label{section:bbw}

The Borel--Bott--Weil theorem computes the cohomology of line bundles on the flag variety of a semisimple algebraic group. It can also be used to compute the cohomology of equivariant vector bundles on Grassmannians.  We restrict here to the cases of classical and isotropic Grassmannians.
\subsection{Classical Grassmannian}

Let $V$ be a vector space of dimension $n$.
We will use the standard identification of the weight lattice of the group $\mathrm{GL}(V)$ with $\mathbb{Z}^{n}$
that takes the fundamental weight of the representation $\Lambda^{k}V^{\vee}$ to the vector $(1, 1,\ldots, 1, 0, 0,\ldots , 0) \in \mathbb{Z}^{n}$
(the first $k$ entries are $1$, and the last $n-k$ are $0$).
We denote by
\begin{equation*}
\rho= (n, n- 1,\ldots, 2, 1)
\end{equation*}
the sum of fundamental weights of $\mathrm{GL}(V)$.

The cone of dominant weights of $\mathrm{GL}(V)$ gets identified with the set of non-increasing sequences
$\alpha = (a_{1}, a_{2},\ldots ,a_{n}$) of integers.
For such $\alpha$ we denote by $\Sigma^{\alpha}V^{\vee} = \Sigma^{a_{1},a_{2},\ldots,a_{n}}V^{\vee}$
the corresponding representation of $\mathrm{GL}(V)$ of highest weight $\alpha$.

Similarly, given a vector bundle $E$ of rank $n$ on a scheme $X$, we consider the corresponding principal $\mathrm{GL}(n)$-bundle on $X$ and denote by $\Sigma^{\alpha}E$ the vector bundle associated with the $\mathrm{GL}(n)$-representation of highest weight $\alpha$.

The Weyl group of $\mathrm{GL}(V)$ is isomorphic to the permutation group $\mathbf{S}_{n}$
and the length function $\ell \colon \mathbf{S}_n \to \mathbb{Z}$ counts the number of inversions in a permutation.
Note that for every weight $\alpha \in \mathbb{Z}^{n}$
there exists a permutation $\sigma \in \mathbf{S}_{n}$ such that $\sigma(\alpha)$ is dominant, i.e., non-increasing.


The linear algebraic group $\mathrm{GL}(V)$ acts naturally on the Grassmannian $\mathrm{Gr}(k,V)$ of $k$-dimensional subspaces in $V$.
Let~$\cU \subset V \otimes \mathcal{O}_{\mathrm{Gr}(k,V)}$ denote the tautological subbundle of rank $k$.
Denote by $V/\cU$ the corresponding quotient bundle and by $\cU^{\perp}$ its dual.
Every irreducible $\mathrm{GL}(V)$--equivariant vector bundle on $\mathrm{Gr}(k,V)$ is isomorphic to
\begin{equation*}
\Sigma^{\beta}\cU^{\vee}\otimes \Sigma^{\gamma}\cU^{\perp}
\end{equation*}
for some dominant weights $\beta \in \mathbb{Z}^{k}$ and $\gamma \in \mathbb{Z}^{n-k}$.

\begin{theorem}
Let $\beta \in \mathbb{Z}^{k}$ and $\gamma \in \mathbb{Z}^{n-k}$ be non-increasing sequences.
Let $\alpha =(\beta,\gamma) \in \mathbb{Z}^{n}$ be their concatenation.
Assume that all entries of $\alpha+\rho$ are distinct.
Let $\sigma \in \bS_n$ be the unique permutation such that $\sigma(\alpha+\rho)$ is strictly decreasing. Then
\begin{equation}
H^{p}(\mathrm{Gr}(k,V), \Sigma^{\beta}\cU^{\vee}\otimes \Sigma^{\gamma}\cU^{\perp} ) =
\Sigma^{\sigma(\alpha+\rho)-\rho}\,V^{\vee}[-\ell(\sigma)].
\end{equation}
If not all entries of $\alpha+\rho$ are distinct then
\begin{equation*}
H^{\bullet}(\mathrm{Gr}(k,V), \Sigma^{\beta}\cU^{\vee}\otimes \Sigma^{\gamma}\cU^{\perp} )=0.
\end{equation*}
\end{theorem}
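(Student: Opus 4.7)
The plan is to reduce the statement to the classical Borel--Bott--Weil theorem on the full flag variety $\mathrm{Fl}(V)$. Consider the natural projection $\pi\colon \mathrm{Fl}(V)\to \mathrm{Gr}(k,V)$ which forgets all the members of a complete flag except the $k$-dimensional one. Over a point $U\in \mathrm{Gr}(k,V)$ the fibre factors as $\mathrm{Fl}(U)\times \mathrm{Fl}(V/U)$, so $\pi$ is a tower of projective bundles. Writing $\alpha=(\beta,\gamma)$ as the concatenation, denote by $\mathcal{L}_\alpha$ the $\mathrm{GL}(V)$-equivariant line bundle on $\mathrm{Fl}(V)$ associated with the character $\alpha$ of the maximal torus.

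First I would show that
\begin{equation*}
R^i\pi_*\mathcal{L}_\alpha \;=\;
\begin{cases}\Sigma^\beta \cU^\vee\otimes \Sigma^\gamma \cU^\perp,& i=0,\\ 0,& i>0.\end{cases}
\end{equation*}
Since $\beta$ and $\gamma$ are non-increasing (dominant for $\mathrm{GL}(\cU)$ and $\mathrm{GL}(\cU^\perp)$ respectively), this follows from an iterated application of the projective bundle formula along the two towers of $\mathbb{P}^j$-bundles comprising $\mathrm{Fl}(\cU)$ and $\mathrm{Fl}(V/\cU)$, combined with Borel--Weil in its simplest form: for a dominant character of a flag variety the line bundle has no higher cohomology and its global sections yield the corresponding Weyl (or Schur) module. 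A degenerate Leray spectral sequence then identifies
\begin{equation*}
H^\bullet(\mathrm{Gr}(k,V),\,\Sigma^\beta \cU^\vee\otimes \Sigma^\gamma \cU^\perp) \;\cong\; H^\bullet(\mathrm{Fl}(V),\,\mathcal{L}_\alpha),
\end{equation*}
reducing the statement to computing the cohomology of a line bundle on the full flag variety.

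The remaining task is the classical Borel--Bott--Weil computation for $\mathcal{L}_\alpha$ on $\mathrm{Fl}(V)$, which I would prove by induction on $\ell(\sigma)$ using the simple-root $\mathbb{P}^1$-fibrations $p_i\colon \mathrm{Fl}(V)\to \mathrm{Fl}(1,\dots,\hat{\imath},\dots,n-1;V)$. The inductive mechanism rests on the following dichotomy applied to $p_i$: if the $i$-th and $(i+1)$-st entries of $\alpha+\rho$ coincide, then $Rp_{i,*}\mathcal{L}_\alpha=0$ (the fibre is $\mathbb{P}^1$ and the bundle restricts to $\cO_{\mathbb{P}^1}(-1)$), which yields the vanishing in the singular case; otherwise, applying the simple reflection $s_i$ to $\alpha+\rho$ produces a new weight $\alpha'$ with $\ell(\sigma')=\ell(\sigma)-1$, and the projective bundle formula together with Serre duality on $\mathbb{P}^1$ furnishes an isomorphism $H^{q+1}(\mathrm{Fl},\mathcal{L}_\alpha)\cong H^q(\mathrm{Fl},\mathcal{L}_{\alpha'})$, so induction completes the proof. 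The main bookkeeping obstacle is tracking the length shift and the cohomological degree jump through these reflections, together with verifying the equivariance of the resulting identification with $\Sigma^{\sigma(\alpha+\rho)-\rho}V^\vee$; this is handled by choosing the $s_i$ that reduces $\sigma$ to a reduced word and observing that the construction is manifestly $\mathrm{GL}(V)$-equivariant at every step.
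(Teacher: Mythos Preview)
The paper does not actually prove this statement: it is stated without proof in Section~\ref{section:bbw} as the classical Borel--Bott--Weil theorem for the Grassmannian, and is then used as a black box throughout. So there is no ``paper's own proof'' to compare against.

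That said, your proposal is a correct outline of the standard (Demazure-style) proof. The reduction to $\mathrm{Fl}(V)$ via the projection $\pi$ is valid: since $\beta$ and $\gamma$ are separately dominant, $\mathcal{L}_\alpha$ is relatively ample along $\pi$ and the higher direct images vanish, so Leray degenerates and the problem becomes computing $H^\bullet(\mathrm{Fl}(V),\mathcal{L}_\alpha)$. The inductive step via the $\mathbb{P}^1$-fibrations $p_i$ is also correct; the only point I would make explicit is that in the non-singular case one must distinguish whether the restriction to the fibre is $\cO_{\mathbb{P}^1}(m)$ with $m\ge 0$ (so $R^0p_{i,*}$ survives and no shift occurs) or $m\le -2$ (so $R^1p_{i,*}$ survives and the shift appears), and it is only the latter that you invoke when reducing $\ell(\sigma)$ by one. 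With that clarification the bookkeeping goes through. The equivariance is automatic, as you note, since every map in sight is $\mathrm{GL}(V)$-equivariant.
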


There is a consequence of the general Borel--Bott--Weil theorem,
that computes direct images under the natural projection of relative Grassmannian $q\colon \mathrm{Gr}_{\mathrm{Gr}(l,V)}(k,E)\to \mathrm{Gr}(l,V)$,
where $E$ is a vector bundle on~$\mathrm{Gr}(l,V)$.
Let us denote by $\cU_{k}$ the taulological bundle on $\mathrm{Gr}_{\mathrm{Gr}(l,V)}(k,E)$.
The following proposition describes the direct images of some bundles of the form~$\Sigma^{\beta}\cU_{k}^{\vee}$:

\begin{proposition} \label{45}
Let $\beta=(\beta_1,\ldots,\beta_k)\in \mathbb{Z}^{k}$ be a non-increasing sequence of integers. Denote by~$\alpha= (\beta;0,\ldots,0) {} \in \mathbb{Z}^l$. Assume that all entries of $\alpha + \rho$ are distinct.
Let $\sigma\in \mathbf{S}_{l}$ be the unique permutation such that $\sigma(\alpha + \rho)$ is strictly decreasing.
Then
\begin{equation}
Rq_*(\Sigma^{\beta}\mathcal{U}_k^{\vee}) =
\Sigma^{\sigma(\alpha + \rho)-\rho}\,\cU_{l}^{\vee}[-\ell(\sigma)].
\end{equation}
If not all entries of $\alpha$ are distinct then $Rq_*(\Sigma^{\beta}\cU_k^{\vee})=0$.
\end{proposition}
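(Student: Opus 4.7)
The plan is to deduce this relative statement from the absolute Borel--Bott--Weil theorem stated just above. The key observation is that $q$ is smooth and flat with fibre over a point $[W]\in\Gr(l,V)$ canonically isomorphic to $\Gr(k,E_{[W]})\cong\Gr(k,l)$, and the tautological bundle $\cU_k$ restricts on each fibre to the tautological subbundle of that Grassmannian. Consequently, the formation of $\Sigma^\beta\cU_k^\vee$ and its derived pushforward along $q$ commute with flat base change on $\Gr(l,V)$.

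First I would compute locally. Choose an open $U\subset\Gr(l,V)$ over which $E$ is trivial; a trivialisation $E|_U\cong\cO_U^{\oplus l}$ identifies $q^{-1}(U)$ with $U\times\Gr(k,l)$ and exhibits $\Sigma^\beta\cU_k^\vee$ restricted to $q^{-1}(U)$ as the pullback of $\Sigma^\beta\cU^\vee$ from the second factor. Flat base change then yields
\[
Rq_*(\Sigma^\beta\cU_k^\vee)|_U \;\cong\; H^\bullet\!\bigl(\Gr(k,l),\Sigma^\beta\cU^\vee\bigr)\otimes_{\Bbbk}\cO_U.
\]
Applying the absolute Borel--Bott--Weil theorem to $\Gr(k,l)$ with $\gamma=(0,\ldots,0)\in\mathbb{Z}^{l-k}$, so that the concatenation $(\beta,\gamma)$ equals $\alpha$, the right-hand side becomes $\Sigma^{\sigma(\alpha+\rho)-\rho}(\Bbbk^l)^\vee[-\ell(\sigma)]\otimes_{\Bbbk}\cO_U$ when the entries of $\alpha+\rho$ are distinct, and vanishes otherwise. (Note that $\sigma(\alpha+\rho)-\rho$ is automatically non-increasing, since $\sigma(\alpha+\rho)$ is strictly decreasing, so the Schur functor on the right is well defined.)

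The final step is to globalise. Because Schur functors commute with scalar extension --- $\Sigma^\mu(M)\otimes_{\Bbbk} R\cong\Sigma^\mu(M\otimes_{\Bbbk} R)$ for any $\Bbbk$-vector space $M$ and commutative $\Bbbk$-algebra $R$ --- the local right-hand side is identified with $\Sigma^{\sigma(\alpha+\rho)-\rho}(E|_U)^\vee[-\ell(\sigma)]$, and by the naturality of Schur functors these local isomorphisms patch across overlaps to a canonical global isomorphism $Rq_*(\Sigma^\beta\cU_k^\vee)\cong\Sigma^{\sigma(\alpha+\rho)-\rho}E^\vee[-\ell(\sigma)]$. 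Specialising to $E=\cU_l$ yields the formula as written. I expect no essential obstacle: the only point requiring a moment of care is the verification that the gluing is independent of the chosen trivialisation of $E$, which is a standard consequence of the functoriality of Schur functors, so the entire proof is a routine propagation of the absolute Borel--Bott--Weil theorem along the flat fibration~$q$.
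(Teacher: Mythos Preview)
Your argument is correct. Note, however, that the paper does not actually supply a proof of this proposition: it simply records it as ``a consequence of the general Borel--Bott--Weil theorem'' and moves on. So there is no detailed paper proof to compare against.

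That said, your route and the paper's implicit one differ in emphasis. The paper has in mind the general form of Borel--Bott--Weil, which computes derived pushforwards of equivariant line bundles along any projection between (partial) flag varieties of a reductive group; when $E=\cU_l$, the relative Grassmannian is the partial flag variety $\mathrm{Fl}(k,l;V)$, and the formula drops out in one line. Your approach instead proves the statement for an arbitrary vector bundle $E$ by local trivialisation and gluing, using only the absolute BBW on $\Gr(k,l)$. This is more elementary and more general (it does not require $E$ to be tautological or the base to be homogeneous), at the cost of spelling out the gluing step. The point that makes your gluing work---and which you state correctly but might sharpen---is that the BBW isomorphism $H^\bullet(\Gr(k,l),\Sigma^\beta\cU^\vee)\cong\Sigma^{\sigma(\alpha+\rho)-\rho}(\Bbbk^l)^\vee$ is $\GL(l)$-equivariant, so the transition functions of $E$ act on the fibre cohomology exactly through the Schur functor $\Sigma^{\sigma(\alpha+\rho)-\rho}$. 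That equivariance is the content of ``naturality of Schur functors'' in your last paragraph, and it is indeed what patches the local identifications into a global one.
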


Also we will use the Littlewood–Richardson rule, that provides a recipe
to decompose the tensor product $\Sigma^{\alpha}\cU \otimes \Sigma^{\beta}\cU^{\vee}$
into a direct sum of bundles of the form $\Sigma^{\gamma}\cU^{\vee}$.
We refer to \cite{5} for the precise formulation of this rule.
Let us just mention that we have the following property of this decomposition.

\begin{lemma} \label{lemma:lrr}
Let $\alpha=(\alpha_1, \ldots,\alpha_k)\in \mathbb{Z}^{k}$ and $\beta=(\beta_1, \ldots, \beta_k)\in \mathbb{Z}^{k}$ be non-increasing sequences.
Then there is a direct sum decomposition
\begin{equation*}
\Sigma^{\alpha_1, \ldots,\alpha_k }\cU \otimes \Sigma^{\beta_1, \ldots, \beta_k}\cU^{\vee}=
\bigoplus \Sigma^{\gamma_1,\ldots, \gamma_k}\cU^{\vee},
\end{equation*}
where
\begin{equation*}
- \alpha_{k+1-i} \le \gamma_i \le \beta_i\quad\text{for all $1 \le i \le k$}\qquad\text{and}\qquad
\displaystyle\sum \gamma_i = \sum \beta_i - \sum \alpha_i.
\end{equation*}
\end{lemma}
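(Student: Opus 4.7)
My plan is to reduce the statement to the classical Littlewood--Richardson rule (cited as \cite{5}) via a twist by the determinant line bundle. Assuming $\alpha$ is a partition (so $\alpha_k\ge 0$), I would first set $\hat\alpha := (\alpha_1-\alpha_k,\alpha_1-\alpha_{k-1},\ldots,0)$, which is itself a partition, and use the isomorphism $\Sigma^\alpha\cU \cong (\det\cU^\vee)^{-\alpha_1}\otimes \Sigma^{\hat\alpha}\cU^\vee$ (both sides are irreducible equivariant bundles with the same highest weight $(-\alpha_k,\ldots,-\alpha_1)$). The tensor product then rewrites as $(\det\cU^\vee)^{-\alpha_1}\otimes \Sigma^{\hat\alpha}\cU^\vee\otimes \Sigma^\beta\cU^\vee$, bringing both Schur factors to the bundle $\cU^\vee$ with partition indices.

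The next step is to apply the Littlewood--Richardson rule to decompose $\Sigma^{\hat\alpha}\cU^\vee\otimes \Sigma^\beta\cU^\vee \cong \bigoplus_\lambda c^\lambda_{\hat\alpha,\beta}\,\Sigma^\lambda\cU^\vee$, and then untwist to obtain summands $\Sigma^\gamma\cU^\vee$ with $\gamma_i := \lambda_i - \alpha_1$. The sum condition $\sum\gamma_i = |\beta|-|\alpha|$ is immediate from $|\lambda| = |\hat\alpha|+|\beta|$ together with the identity $|\hat\alpha| = k\alpha_1-|\alpha|$. The lower bound $\gamma_i \ge -\alpha_{k+1-i}$ is equivalent to $\lambda_i \ge \hat\alpha_i$, which is precisely the LR inclusion $\hat\alpha \subset \lambda$ and is automatic.

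The main obstacle is the upper bound $\gamma_i \le \beta_i$, i.e.\ $\lambda_i - \beta_i \le \alpha_1$. For this I would invoke the conjugate symmetry $c^\lambda_{\hat\alpha,\beta} = c^{\lambda'}_{\hat\alpha',\beta'}$ of Littlewood--Richardson coefficients combined with column-strictness of semistandard tableaux: an LR tableau of shape $\lambda'/\beta'$ with content $\hat\alpha'$ has labels strictly increasing down each column, drawn from $\{1,\ldots,\ell(\hat\alpha')\} = \{1,\ldots,\hat\alpha_1\}$, so every column has at most $\hat\alpha_1$ boxes. Transposing, every row of $\lambda/\beta$ has at most $\hat\alpha_1 = \alpha_1-\alpha_k$ boxes, yielding $\gamma_i = \lambda_i-\alpha_1 \le \beta_i - \alpha_k \le \beta_i$ since $\alpha_k\ge 0$. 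This row-bound is the essential combinatorial input; the rest of the argument is formal bookkeeping.
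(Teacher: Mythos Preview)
The paper does not prove this lemma; it is stated as a consequence of the Littlewood--Richardson rule with a bare reference to \cite{5}. Your argument supplies the missing details correctly. The reduction via the twist $\Sigma^\alpha\cU\cong(\det\cU^\vee)^{-\alpha_1}\otimes\Sigma^{\hat\alpha}\cU^\vee$ is the natural one, and the conjugate-symmetry trick for the upper bound is a clean way to extract the row bound $\lambda_i-\beta_i\le\hat\alpha_1$ from column-strictness of LR tableaux.

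Two small points. First, you assume $\alpha_k\ge 0$ but then apply the Littlewood--Richardson rule directly to $\Sigma^{\hat\alpha}\cU^\vee\otimes\Sigma^\beta\cU^\vee$, which tacitly requires $\beta_k\ge 0$ as well; you should say so. Second, the hypothesis that $\alpha$ and $\beta$ are partitions is not merely convenient but necessary for the bounds as stated: for $k=2$, $\alpha=(0,-1)$, $\beta=(0,0)$ one has $\Sigma^\alpha\cU\otimes\Sigma^\beta\cU^\vee\cong\cU^\vee=\Sigma^{1,0}\cU^\vee$, and $\gamma_1=1\not\le\beta_1=0$. Every application of the lemma in the paper uses partitions, so this is an imprecision in the stated hypotheses rather than a defect in your argument.
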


\subsection{Isotropic Grassmannian}

Now, let $V$ be a $2n$-dimensional vector space with a fixed symplectic form.
The weight lattice of the corresponding symplectic group $\mathrm{Sp}(V)$ can be identified with $\mathbb{Z}^{n}$:
under this identification, as before, the $k$-th fundamental weight goes to $(1,\dots,1,0,\dots,0)$
(the first $k$ entries are $1$, and the last $n-k$ are $0$).
Denote by
\begin{equation*}
\rho= (n, n-1,\ldots, 2, 1)
\end{equation*}
the sum of the fundamental weights of $\mathrm{Sp}(V)$.

The cone of dominant weights of $\mathrm{Sp}(V)$ gets identified with the set of non-increasing sequences $\alpha = (a_{1}, a_{2},\ldots ,a_{n}$)
of non-negative integers.
For such $\alpha$ we denote by $\Sigma^{\alpha}_{\mathrm{Sp}}V = \Sigma^{a_{1},a_{2},\ldots,a_{n}}_{\mathrm{Sp}}V$
the corresponding representation of $\mathrm{Sp}(V)$.

Similarly, given a symplectic vector bundle $E$ of rank $2n$ on a scheme $X$,
we consider the corresponding principal $\mathrm{Sp}(2n)$-bundle on $X$ and denote by $\Sigma^{\alpha}_{\mathrm{Sp}}E$
the vector bundle associated with the~$\mathrm{Sp}(2n)$-representation of highest weight $\alpha$.

The Weyl group of $\mathrm{Sp}(V)$ is equal to a semidirect product of $\mathbf{S}_{n}$ and $(\mathbb{Z}/2\mathbb{Z})^{n},$
where $\mathbf{S}_{n}$ acts on the weight lattice $\mathbb{Z}^n$ by permutations and $(\mathbb{Z}/2\mathbb{Z})^{n}$
acts by changes of signs of the coordinates. Let us denote by $\ell \colon \mathbf{S}_{n}\ltimes(\mathbb{Z}/2\mathbb{Z})^{n}\to \mathbb{Z}$
the corresponding length function.
Note that for every $\alpha \in \mathbb{Z}^{n}$ there exists an element $\sigma$ of the Weyl group such that $\sigma(\alpha)$
is dominant, i.e., is a non-increasing sequence of non-negative integers.

The symplectic group $\mathrm{Sp}(V)$ acts naturally on the Grassmannian $\mathrm{IGr}(k,V) \subset \Gr(k,V)$
of isotropic $k$-dimensional subspaces.
Let $\mathcal{U} \subset V \otimes \mathcal{O}_{\mathrm{IGr}(k, V)}$ denote the tautological subbundle of rank $k$.
Denote by~$V/\mathcal{U}$ the corresponding quotient bundle and by $\cU^{\perp}$ its dual.
Note that all these bundles are restricted from $\Gr(k,V)$.
The main difference is that
in the case of isotropic Grassmannian the tautological bundle~$\mathcal{U}$ is naturally a subbundle of $\mathcal{U}^{\perp},$
i.e. $\mathcal{U} \subset \mathcal{U}^{\perp},$ so we also have the quotient bundle
\begin{equation*}
\cS := \mathcal{U}^{\perp}/\mathcal{U}
\end{equation*}
of rank $2n-2k.$
Note that $\cS$ is a symplectic bundle.
Every irreducible $\mathrm{Sp}(V)$--equivariant vector bundle on $\IGr(k,V)$ is isomorphic to
\begin{equation*}
\Sigma^{\beta}\cU^{\vee}\otimes \Sigma^{\gamma}_{\mathrm{Sp}}\cS
\end{equation*}
for some $\GL$-dominant weight $\beta \in \mathbb{Z}^{k}$ and $\Sp$-dominant weight $\gamma \in \mathbb{Z}^{n-k}$.

\begin{theorem} \label{12}
Let $\beta \in \mathbb{Z}^{k}$ be a non-increasing sequence and
let $\gamma \in \mathbb{Z}^{n-k}$ be a non-increasing sequence of non-negative integers.
Let $(\alpha_1,\ldots,\alpha_n)=\alpha =(\beta,\gamma) \in \mathbb{Z}^{n}$ be their concatenation.
Assume that all entries of $\alpha+\rho$ are non-zero integers with distinct absolute values.
Let $\sigma$ be the unique element of the Weyl group~$\mathbf{S}_{n}\ltimes(\mathbb{Z}/2\mathbb{Z})^{n}$
such that $\sigma(\alpha+\rho)$ is a strictly decreasing sequence of positive integers.
Then
\begin{equation}
H^{p}(\mathrm{IGr}(k,V), \Sigma^{\beta}\mathcal{U}^{\vee}\otimes \Sigma_{\mathrm{Sp}}^{\gamma}\cS) =
\Sigma_{\mathrm{Sp}}^{\sigma(\alpha+\rho)-\rho}\,V^{\vee}[-\ell(\sigma)].
\end{equation}
If not all entries of $\alpha+\rho$ have distinct non-zero absolute values then
\begin{equation*}
H^{\bullet}(\mathrm{IGr}(k,V), \Sigma^{\beta}\mathcal{U}^{\vee}\otimes \Sigma^{\gamma}_{\mathrm{Sp}}\cS)=0.
\end{equation*}
\end{theorem}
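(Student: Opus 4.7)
The plan is to reduce Theorem~\ref{12} to the classical Borel--Bott--Weil theorem for line bundles on the full isotropic flag variety of $\Sp(V)$, which I would take as a black box. Set $G = \Sp(V)$, let $B \subset G$ be a Borel subgroup, and let $P \subset G$ be the maximal parabolic associated with the $k$-th simple root, so that $\IGr(k,V) = G/P$. The Levi quotient of $P$ is $L \cong \GL(k) \times \Sp(2n-2k)$, and under this identification the irreducible $G$-equivariant bundle $\Sigma^{\beta}\cU^{\vee}\otimes \Sigma^{\gamma}_{\Sp}\cS$ is precisely the bundle $\mathcal{E}_\alpha$ associated with the irreducible $L$-representation of highest weight $\alpha = (\beta, \gamma)$; this weight is $L$-dominant by the assumptions on $\beta$ and $\gamma$.

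The key step would be to consider the natural projection $\pi \colon G/B \to G/P$, whose fibres are isomorphic to $P/B$, the product of the full flag variety of $\GL(k)$ and the full isotropic flag variety of $\Sp(2n-2k)$. Viewing $\alpha$ as a character of the maximal torus, let $\mathcal{L}_\alpha$ denote the associated $B$-equivariant line bundle on $G/B$. Since $\alpha$ is $L$-dominant, its restriction to every fibre is the equivariant line bundle associated with a dominant weight of $L$, so applying Borel--Weil on the fibres (equivalently, Kempf vanishing for $L$) together with flat base change gives
\begin{equation*}
R^{i}\pi_{*}\mathcal{L}_\alpha = 0 \text{ for } i>0,
\qquad
\pi_{*}\mathcal{L}_\alpha \;\cong\; \Sigma^{\beta}\cU^{\vee}\otimes \Sigma^{\gamma}_{\Sp}\cS.
\end{equation*}
The Leray spectral sequence would then collapse and yield an isomorphism $H^{p}(\IGr(k,V), \Sigma^{\beta}\cU^{\vee}\otimes \Sigma^{\gamma}_{\Sp}\cS) \cong H^{p}(G/B, \mathcal{L}_\alpha)$ for every $p$.

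It would then suffice to invoke the classical Borel--Bott--Weil theorem for line bundles on $G/B$. For the symplectic group the weight $\alpha + \rho$ is regular precisely when its entries are non-zero integers with pairwise distinct absolute values, which is exactly the hypothesis of the theorem; this regularity is equivalent to $\alpha+\rho$ avoiding every wall of the Weyl chambers, which for type $C$ are the hyperplanes $a_{i} = \pm a_{j}$ and $a_{i} = 0$. In the regular case a unique element $\sigma$ of the Weyl group $\bS_n \ltimes (\ZZ/2\ZZ)^{n}$ carries $\alpha + \rho$ to a strictly decreasing sequence of positive integers, and BBW on $G/B$ yields $H^{\ell(\sigma)}(G/B, \mathcal{L}_\alpha) = \Sigma^{\sigma(\alpha+\rho)-\rho}_{\Sp} V^\vee$ with all other cohomology vanishing, while in the singular case the total cohomology vanishes. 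The only genuine technical point — not truly a hard obstacle given the classical nature of the result — is the higher direct image vanishing in the middle step, which is handled by Kempf vanishing for the Levi $L$; the rest is a bookkeeping exercise with the Weyl group action on $\ZZ^{n}$ and the matching of sign conventions between the statement and the formulation of BBW on $G/B$.
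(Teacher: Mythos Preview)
Your sketch is correct and is precisely the standard reduction of the ``vector bundle'' form of Borel--Bott--Weil to the line bundle statement on $G/B$. The paper, however, does not give any proof of Theorem~\ref{12}: it is stated without argument in Section~\ref{section:bbw} as a direct instance of the general Borel--Bott--Weil theorem (the opening sentence of that section already says the theorem ``computes the cohomology of line bundles on the flag variety'' and ``can also be used'' for equivariant vector bundles on Grassmannians). So there is nothing in the paper to compare against; your write-up simply supplies the routine derivation the paper omits.
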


The following simple consequence of the Borel--Bott--Weil theorem is quite useful.

\begin{corollary} \label{11}
In the notation of the previous theorem suppose there exists $m\in \mathbb{Z},$
such that
\begin{equation*}
\#\{i \text{ such that }  |\alpha_i+\rho_i|\le m \}\ge m+1.
\end{equation*}
Then for all $p$ we have $$H^{p}(\mathrm{IGr}(k,V), \Sigma^{\beta}\mathcal{U}^{\vee}\otimes \Sigma_{\mathrm{Sp}}^{\gamma}\cS)=0.$$
\end{corollary}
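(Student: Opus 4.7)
The plan is to apply Theorem~\ref{12} directly and show that under the given hypothesis the ``good'' case of Borel--Bott--Weil cannot occur, so we land in the vanishing alternative.

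First I would recall that by Theorem~\ref{12}, the cohomology $H^\bullet(\IGr(k,V),\Sigma^{\beta}\cU^\vee\otimes\Sigma^{\gamma}_{\Sp}\cS)$ vanishes in all degrees unless the entries of $\alpha+\rho$ are all nonzero and have pairwise distinct absolute values. So it is enough to prove that under the hypothesis of the corollary, one of these two conditions must fail.

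The argument is then a pigeonhole count. Suppose the hypothesis holds for some $m\in\ZZ$. If $m\le -1$, then the condition $|\alpha_i+\rho_i|\le m$ is impossible, so there are no such indices $i$, contradicting $m+1\ge 0$; hence $m\ge 0$. If $m=0$, then there exists $i$ with $\alpha_i+\rho_i=0$, and the vanishing of Theorem~\ref{12} applies. If $m\ge 1$, consider the set of nonzero integers $z$ with $|z|\le m$; it consists of $m$ pairs $\{j,-j\}$ for $j=1,\dots,m$, so there are only $m$ possible distinct absolute values. Since by hypothesis at least $m+1$ of the numbers $\alpha_i+\rho_i$ lie in $\{-m,-m+1,\dots,m\}$, either one of them equals $0$, or by the pigeonhole principle two of them share the same absolute value. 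Either way, the hypothesis ``all entries of $\alpha+\rho$ are nonzero integers with distinct absolute values'' of Theorem~\ref{12} is violated.

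Hence we are in the second alternative of Theorem~\ref{12}, which gives $H^{\bullet}(\IGr(k,V),\Sigma^{\beta}\cU^\vee\otimes\Sigma^{\gamma}_{\Sp}\cS)=0$, as required. There is no real obstacle here; the only subtlety to keep straight is the careful counting of absolute values (remembering that $0$ also counts as a ``forbidden'' value, which handles the $m=0$ case uniformly with $m\ge 1$).
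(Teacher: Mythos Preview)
Your argument is correct and is exactly the intended one: the paper states this corollary without proof as a ``simple consequence'' of Theorem~\ref{12}, and the pigeonhole count on absolute values in $\{0,1,\dots,m\}$ is precisely what one has in mind. One tiny quibble: your treatment of $m\le -1$ is not quite a contradiction (for $m=-1$ the hypothesis $0\ge 0$ is vacuously true), but this is an artifact of the statement rather than of your proof, and every application in the paper uses $m\ge 1$.
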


\section{Some cohomology computations and the rectangular part of $\mathrm{D}^{b}(\mathrm{IGr}(3,8))$}
\label{section:db}

From now on we denote by $V$ an $8$-dimensional vector space with a fixed symplectic form $\omega$.
In what follows we will always identify $V$ with $V^{\vee}$ via the symplectic form $\omega$.
We denote
\begin{equation*}
X = \mathrm{IGr}(3,V)
\end{equation*}
and note that $\dim X = 12$ and
\begin{equation}
\label{eq:kx}
\omega_X \cong \cO(-6),
\end{equation}
where $\cO(1) = \det(\cU^\vee)$ is the ample generator of the Picard group $\operatorname{Pic}(X)$.
Note also that the rank of the Grothendieck group of $X$, equal to the index of the subgroup
$\bS_3 \times (\ZZ/2\ZZ) \subset \bS_4 \rtimes (\ZZ/2\ZZ)^4$, is $32$.

Recall that any irreducible $\mathrm{Sp}(V)$-equivariant bundle on $\mathrm{IGr}(3,V)$ is isomorphic to
$\Sigma^{\beta}\mathcal{U}^{\vee}\otimes \Sigma_{\mathrm{Sp}}^{\gamma}\cS$,
where now $\cU$ is the tautological bundle of rank $3$ and $\cS := \cU^\perp/\cU$ is a symplectic bundle of rank $2$,
so~$\beta\in \mathbb{Z}^{3}$ and~$\gamma\in \mathbb{Z}$.
In particular, the corresponding weight $\gamma$ is just a single non-negative integer~$\gamma =(c)$, and
\begin{equation*}
\Sigma^{\gamma}_{\mathrm{Sp}}\cS\simeq S^{c}\cS
\end{equation*}
since $\mathrm{Sp}_2\simeq \mathrm{SL}_2$.
Thus, any irreducible $\mathrm{Sp}(V)$-equivariant bundle on $\mathrm{IGr}(3,V)$ can be written as
\begin{equation*}
\Sigma^{\beta}\mathcal{U}^{\vee}\otimes S^c.
\end{equation*}
We will frequently use the following natural identifications:
\begin{equation*}
\Sigma^{\alpha_1,\alpha_2,\alpha_3}\cU \cong \Sigma^{-\alpha_3,-\alpha_2,-\alpha_1}\cU^\vee,
\qquad
\Sigma^{\alpha_1+t,\alpha_2+t,\alpha_3+t}\cU^\vee \cong \Sigma^{\alpha_1,\alpha_2,\alpha_3}\cU^\vee \otimes \cO(t).
\end{equation*}

In what follows for a dominant weight of the form $(\alpha_1,\alpha_2,0)$ we will omit the last zero,
i.e., we will write just~$(\alpha_1,\alpha_2)$~and~$\Sigma^{\alpha_1,\alpha_2}\cU^{\vee}$ for such a weight.
Note also that $\Sigma^{a,0,0}\cU^\vee \cong S^a\cU^\vee$ and $\Sigma^{1,1}\cU^\vee = \Lambda^2\cU^\vee$.


\subsection{The rectangular part}

We consider the partial ordering on dominant weights of $\mathrm{GL}_3$ defined by:
\begin{align*}
(\alpha_1,\alpha_2,\alpha_3) &\le (\beta_1,\beta_2,\beta_3)  &&
\text{if $\alpha_1 \le \beta_1$ and $\alpha_2 \le \beta_2$ and $\alpha_3 \le \beta_3$},
\\
(\alpha_1,\alpha_2,\alpha_3) &< (\beta_1,\beta_2,\beta_3)  &&
\text{if $\alpha \le \beta$ and $\alpha \ne \beta$}.
\end{align*}
The next computation allows to construct the rectangular part of the desired Lefschetz collection.

\begin{lemma}
\label{19}
For $(0,0) \le \alpha \le(2,1)$, $(0,0) \le \beta \le (2,1)$, and $0 \le k \le 5$ we have
\begin{equation*}
\mathrm{Ext}^{\bullet}(\Sigma^{\alpha}\cU^{\vee}(k),\Sigma^{\beta}\cU^{\vee})\ne 0 {}\iff{} \alpha\le \beta\  \text{and}\ k=0 .
\end{equation*}
Moreover, for $k = 0$ we have
\begin{equation*}
\mathrm{Hom}(\Sigma^{\alpha}\cU^{\vee},\Sigma^{\beta}\cU^{\vee})=\begin{cases}
\Bbbk,&\text{if $\alpha=\beta$};\\
\Sigma^{\beta}_{\mathrm{Sp}}V, &\text{if $\alpha=0$};\\
V, &\text{if $\alpha=(1,0,0)$ and $\beta=(1,1,0)$ or $\beta=(2,0,0)$};\\
(V\otimes V)/\omega, &\text{if $\alpha=(1,0,0)$ and $\beta=(2,1,0)$};\\
V, &\text{if $\alpha=(1,1,0)$ or $\alpha=(2,0,0),$ and $\beta=(2,1,0)$}.
\end{cases}
\end{equation*}
All other $\mathrm{Ext}^{\bullet}(\Sigma^{\alpha}\cU^{\vee},\Sigma^{\beta}\cU^{\vee})$ are equal to zero.
\end{lemma}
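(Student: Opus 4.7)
The plan is to translate each Ext-group into a cohomology computation on $X$ via the standard formula
\[
\mathrm{Ext}^{\bullet}(\Sigma^{\alpha}\cU^{\vee}(k),\Sigma^{\beta}\cU^{\vee}) \cong H^{\bullet}\bigl(X, \Sigma^{\alpha}\cU \otimes \Sigma^{\beta}\cU^{\vee}(-k)\bigr),
\]
then to decompose $\Sigma^{\alpha}\cU \otimes \Sigma^{\beta}\cU^{\vee}$ into irreducible $\mathrm{GL}_{3}$-summands $\Sigma^{\gamma}\cU^{\vee}$, and finally to compute the cohomology of each summand using the Borel--Bott--Weil theorem (Theorem~\ref{12}) or the vanishing Corollary~\ref{11}. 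For the tiny weights involved the decomposition is cleanest via Pieri's rule combined with the identifications $\Lambda^{2}\cU \cong \cU^{\vee}(-1)$ and $\cU \cong \Lambda^{2}\cU^{\vee}(-1)$ (both consequences of $\det\cU = \cO(-1)$); note that Lemma~\ref{lemma:lrr} only bounds the possible $\gamma$'s, so multiplicities must be verified independently, e.g.\ by comparing ranks.

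For $k \ge 1$, a summand $\Sigma^{\gamma}\cU^{\vee}(-k)$ corresponds to the $\mathrm{Sp}_8$-weight $(\gamma_{1}-k,\gamma_{2}-k,\gamma_{3}-k,0)$, and the sequence to feed into Theorem~\ref{12} is $(\gamma_{1}+4-k,\gamma_{2}+3-k,\gamma_{3}+2-k,1)$. Because the constraints $-\alpha_{4-i}\le\gamma_{i}\le\beta_{i}$ with $\alpha,\beta\le(2,1)$ force $\gamma_{i}\in[-2,2]$, the third coordinate lies in $[-k,4-k]$, and a short finite check for each $k\in\{1,\dots,5\}$ shows that this sequence always contains either a zero or two entries of equal absolute value; Corollary~\ref{11} (or Theorem~\ref{12}) then yields $H^{\bullet}=0$ for every summand, giving the required vanishing.

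For $k=0$ and $\alpha=0$ the statement is immediate from Theorem~\ref{12}: the sequence $(\beta_{1}+4,\beta_{2}+3,2,1)$ is strictly decreasing and positive for every $\beta\le(2,1)$, so $H^{0}(X,\Sigma^{\beta}\cU^{\vee})=\Sigma^{\beta}_{\mathrm{Sp}}V$ with no higher cohomology. For general $\alpha\ne 0$ one decomposes as above, discards summands with non-regular $\mathrm{Sp}_8$-weights, and reads off the contributing $\Sigma^{\gamma}_{\mathrm{Sp}}V$. For example, $\cU\otimes\Sigma^{2,1}\cU^{\vee}=S^{2}\cU^{\vee}\oplus\Lambda^{2}\cU^{\vee}\oplus\Sigma^{2,1,-1}\cU^{\vee}$ (the last summand is acyclic by BBW), giving $\mathrm{Hom}(\cU,\Sigma^{2,1}\cU^{\vee})=S^{2}V\oplus(\Lambda^{2}V/\Bbbk\omega)=(V\otimes V)/\omega$ as claimed; the remaining non-zero entries of the table are verified analogously. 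The vanishing when $\alpha\not\le\beta$ falls out of the same case analysis, since every surviving summand then has a non-regular $\mathrm{Sp}_8$-weight (typically two entries with absolute value $1$).

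The proof is essentially a large bookkeeping exercise covering the $25\times 6$ triples $(\alpha,\beta,k)$; the main obstacle is not mathematical depth but rather the need to pin down each Littlewood--Richardson decomposition exactly (Lemma~\ref{lemma:lrr} being only a necessary condition) and to identify the precise $\mathrm{Sp}(V)$-irreducible $\Sigma^{\gamma}_{\mathrm{Sp}}V$ appearing in each non-trivial $\mathrm{Hom}$ -- in particular recognizing $\Sigma^{1,1}_{\mathrm{Sp}}V$ as the traceless part $\Lambda^{2}V/\Bbbk\omega$ in the computation of $\mathrm{Hom}(\cU,\Sigma^{2,1}\cU^{\vee})$.
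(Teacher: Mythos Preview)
Your approach is essentially the paper's: reduce to cohomology of $\Sigma^{\gamma}\cU^{\vee}(-k)$ via Littlewood--Richardson and then kill each summand for $k\ge 1$ with Corollary~\ref{11}, while for $k=0$ read off the surviving $\Sigma^{\gamma}_{\mathrm{Sp}}V$ from Theorem~\ref{12}. The paper organizes the vanishing for $k\ge 1$ slightly more efficiently by using the sharp bounds $\gamma_1\in[0,2]$, $\gamma_2\in[-1,1]$, $\gamma_3\in[-2,0]$ from Lemma~\ref{lemma:lrr} (rather than your coarser $\gamma_i\in[-2,2]$), which makes Corollary~\ref{11} apply directly for each $k$ without a full enumeration, and it treats $k=0$ uniformly by noting that nonvanishing forces $\gamma_3=0$ and hence $\gamma$ dominant---but these are differences of bookkeeping, not of method.
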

\begin{proof}
We need to compute
$\mathrm{Ext}^{\bullet}(\Sigma^{\alpha}\cU^{\vee}(k),\Sigma^{\beta}\cU^{\vee})=
\mathrm{H}^{\bullet}(X,\Sigma^{\alpha}\cU\otimes \Sigma^{\beta}\cU^{\vee}(-k))$,
where $(0,0) \le \alpha \le(2,1)$ and~$(0,0) \le \beta \le (2,1)$.
Using the Littlewood–Richardson rule we decompose
\begin{equation*}
\Sigma^{\alpha}\cU\otimes \Sigma^{\beta}\cU^{\vee}(-k)= \bigoplus \Sigma^{\gamma}\cU^{\vee}(-k),
\end{equation*}
where $\gamma=(\gamma_1,\gamma_2,\gamma_3)$ and $\gamma_1\in [0,2],$ $\gamma_2 \in [-1,1],$ $\gamma_3\in [-2,0]$
by Lemma~\ref{lemma:lrr}, and so we need to compute~$\mathrm{H}^{\bullet}(X,\Sigma^{\gamma}\cU^{\vee}(-k))$.

Let us denote by $(x_1,x_2,x_3,x_4)=(\gamma_1-k,\gamma_2-k,\gamma_3-k,0)+\rho=(\gamma_1+4-k,\gamma_2+3-k,\gamma_3+2-k,1)$.
So we get that $x_1\in [4-k,6-k],$ $x_2 \in [2-k,4-k],$ $x_3=[-k,2-k]$ and $x_4=1$.
In each of the next five cases we apply Corollary~\ref{11} to conclude that the cohomology groups vanish.
\begin{itemize}
\item
If $k=1$ then $|x_3|\le 1$ and $x_4=1$ so
all cohomology groups vanish.
\item
If $k=2$ then $|x_3|\le 2,$ $|x_2|\le 2$ and $x_4=1$ so
all cohomology groups vanish.
\item
If $k=3$ then $|x_3|\le 3,$ $|x_2|\le 3$, $|x_1|\le 3$ and $x_4=1$ so
all cohomology groups vanish.
\item
If $k=4$ then $|x_1|\le 2,$ $|x_2|\le 2$ and $x_4=1$ so
all cohomology groups vanish.
\item
If $k=5$ then $|x_1|\le 1$ and $x_4=1$ so
all cohomology groups vanish.
\end{itemize}
Now, finally, consider the case $k=0$.
Assume that not all the cohomology groups vanish.
Since $x_4=1$ and~$x_3 \in [0,2]$ it follows that $x_3=2>x_4$.
Since $x_2\in [2,4]$ it follows that $x_2 \ge 3 > x_3$.
Since~$x_1\in [4,6]$ and $x_1 {}\ge{} x_2$ we conclude that $\gamma+\rho$ is strictly dominant, hence $\gamma$ is dominant,
hence the only non-trivial cohomology group is $\mathrm{H}^0$.
Furthermore, by the Littlewood--Richardson rule we deduce that $\alpha\le \beta$.
Finally, considering the summands with dominant $\gamma$ in $\Sigma^{\alpha}\cU\otimes \Sigma^{\beta}\cU^{\vee}$
we deduce the required formula for the $\operatorname{Hom}$-spaces.
\end{proof}

Recall the collection~\eqref{eq:e} of five vector bundles on $\IGr(3,V)$ defined in the Introduction.

\begin{corollary}
\label{corollary:e}
The following collection of $30$ vector bundles
\begin{equation*}
\mathfrak{E},\mathfrak{E}(1),\mathfrak{E}(2),\mathfrak{E}(3),\mathfrak{E}(4),\mathfrak{E}(5)
\end{equation*}
is a rectangular Lefschetz exceptional collection in $\mathrm{D}^b(\mathrm{IGr}(3,V))$ with respect to $\cO(1)$.
\end{corollary}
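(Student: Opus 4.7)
The plan is to reduce everything to a direct application of Lemma~\ref{19}.

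First, I would observe that each of the five bundles constituting $\mathfrak{E}$ has the form $\Sigma^{\alpha}\cU^{\vee}$ with $\alpha$ running through the five dominant weights satisfying $(0,0)\le \alpha \le (2,1)$, namely
\begin{equation*}
\alpha \in \{(0,0,0),\;(1,0,0),\;(2,0,0),\;(1,1,0),\;(2,1,0)\},
\end{equation*}
listed here in the order in which they appear in $\mathfrak{E}$. The collection $\mathfrak{E},\mathfrak{E}(1),\ldots,\mathfrak{E}(5)$ is Lefschetz and rectangular with respect to $\cO(1)$ by construction, since each block is the previous one tensored by $\cO(1)$ and all six blocks have the same length. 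It remains to check that it is an exceptional collection.

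Next, I would verify semiorthogonality between blocks. Given two bundles $\Sigma^{\alpha}\cU^{\vee}(i)$ and $\Sigma^{\beta}\cU^{\vee}(j)$ in our collection with $i>j$, tensoring by $\cO(-j)$ yields
\begin{equation*}
\mathrm{Ext}^{\bullet}\bigl(\Sigma^{\alpha}\cU^{\vee}(i),\Sigma^{\beta}\cU^{\vee}(j)\bigr)
=\mathrm{Ext}^{\bullet}\bigl(\Sigma^{\alpha}\cU^{\vee}(i-j),\Sigma^{\beta}\cU^{\vee}\bigr),
\end{equation*}
and since $k := i-j \in \{1,2,3,4,5\}$ with $(0,0)\le \alpha,\beta \le(2,1)$, the hypothesis of Lemma~\ref{19} applies and this $\mathrm{Ext}$ group vanishes. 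In particular, any pair coming from different blocks contributes zero.

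Finally, I would handle the situation within a single block. By the same twist, it suffices to check that $\mathfrak{E}$ itself is an exceptional collection, i.e., that for the ordering $\alpha_0=(0,0,0),\alpha_1=(1,0,0),\alpha_2=(2,0,0),\alpha_3=(1,1,0),\alpha_4=(2,1,0)$ we have $\mathrm{Ext}^{\bullet}(\Sigma^{\alpha_i}\cU^{\vee},\Sigma^{\alpha_j}\cU^{\vee})=0$ whenever $i>j$, and $=\Bbbk$ when $i=j$. Lemma~\ref{19} with $k=0$ reduces this to the combinatorial claim that $\alpha_i \le \alpha_j$ in the coordinate-wise partial order forces $i\le j$, i.e., that the chosen linear ordering of the five weights is a linear extension of the partial order. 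The only non-trivial case is the pair $\alpha_2=(2,0,0)$ and $\alpha_3=(1,1,0)$: these are incomparable, so no constraint is violated; all other inequalities respect the order. The diagonal entries give $\mathrm{Hom}(\Sigma^{\alpha_i}\cU^{\vee},\Sigma^{\alpha_i}\cU^{\vee})=\Bbbk$ directly from Lemma~\ref{19}, so each $\Sigma^{\alpha_i}\cU^{\vee}$ is exceptional. This completes the verification; there is no real obstacle here, as the entire content of the corollary is the combinatorial compatibility of the chosen block ordering with Lemma~\ref{19}.
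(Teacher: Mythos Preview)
Your proposal is correct and follows exactly the approach the paper intends: the corollary is stated without proof precisely because it is an immediate consequence of Lemma~\ref{19}, and your argument spells out the routine verification (semiorthogonality between blocks from the $k\ge 1$ case, exceptionality within a block from the $k=0$ case together with the observation that the chosen order on $\mathfrak{E}$ refines the coordinate-wise partial order on the five weights).
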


Recall also the collection $\mathfrak{E}'$ defined in~\eqref{eq:e-prime}.
Using Proposition~\ref{proposition:long-mutations} and~\eqref{eq:kx} we deduce

\begin{corollary}
\label{corollary:e-prime}
The following collection of $30$ vector bundles
\begin{equation*}
\mathfrak{E}',\mathfrak{E}'(1),\mathfrak{E}'(2),\mathfrak{E}'(3),\mathfrak{E}'(4),\mathfrak{E}'(5)
\end{equation*}
is a rectangular Lefschetz exceptional collection in $\mathrm{D}^b(\mathrm{IGr}(3,V))$ with respect to $\cO(1)$.
\end{corollary}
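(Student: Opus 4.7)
The plan is to deduce this corollary from Corollary~\ref{corollary:e} by a single application of Proposition~\ref{proposition:long-mutations}, using the canonical bundle formula~\eqref{eq:kx}.

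The last term of the exceptional collection $\mathfrak{E}, \mathfrak{E}(1), \ldots, \mathfrak{E}(5)$ furnished by Corollary~\ref{corollary:e} is $\Sigma^{2,1}\cU^{\vee}(5)$, and tensoring it with $\omega_X \cong \cO(-6)$ yields $\Sigma^{2,1}\cU^{\vee}(-1)$. By the second assertion of Proposition~\ref{proposition:long-mutations}, removing this last term and prepending $\Sigma^{2,1}\cU^{\vee}(-1)$ produces another exceptional collection of the same length~$30$ in $\Db(\IGr(3,V))$.

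The remaining step is pure bookkeeping: I would regroup the resulting sequence into six successive blocks of five bundles and verify that for each $i \in \{0,1,\ldots,5\}$ the $i$-th block is
\begin{equation*}
(\Sigma^{2,1}\cU^{\vee}(i-1),\ \cO(i),\ \cU^{\vee}(i),\ S^2\cU^{\vee}(i),\ \Lambda^{2}\cU^{\vee}(i)) = \mathfrak{E}'(i),
\end{equation*}
so that the whole collection reads $\mathfrak{E}', \mathfrak{E}'(1), \ldots, \mathfrak{E}'(5)$. Since consecutive blocks differ by the twist $\cO(1)$, this is, by Definition~\ref{def:lefschetz}, a rectangular Lefschetz collection with respect to $\cO(1)$ with support partition $(5,5,5,5,5,5)$.

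I do not expect any genuine obstacle here: all nontrivial cohomological content is already packaged in Lemma~\ref{19} and Corollary~\ref{corollary:e}, and the passage to Corollary~\ref{corollary:e-prime} is a formal mutation argument combined with the identification $\omega_X \cong \cO(-6)$. The only thing to be careful about is that the mutated collection regroups cleanly into blocks of the form $\mathfrak{E}'(i)$, which is immediate from the explicit definitions of $\mathfrak{E}$ and $\mathfrak{E}'$.
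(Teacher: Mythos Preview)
Your proposal is correct and follows exactly the approach indicated in the paper: the paper's proof consists of the single sentence ``Using Proposition~\ref{proposition:long-mutations} and~\eqref{eq:kx} we deduce,'' and you have simply spelled out this deduction in detail. The regrouping of the mutated collection into the blocks $\mathfrak{E}'(i)$ is indeed immediate from the definitions, as you observe.
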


As the rank of the Grothendieck group is 32, the above collections are not full,
and to complete them we need to add two more objects.
We will do this in Section~\ref{section:non-rectangular} after some preparations.

\subsection{An extra bundle}

Now we will try to add the vector bundle $\Sigma^{3,1}\cU^\vee \cong (S^3\cU^\vee \otimes \cU^\vee) / S^4\cU^\vee$
to the exceptional collection.
Actually, it does not fit, but as we will see later its modification does,
so the results from this section will be useful.

\begin{lemma} \label{31U}
The vector bundle $\Sigma^{3,1}\cU^{\vee}$ is exceptional.
\end{lemma}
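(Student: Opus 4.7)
The plan is to show directly that $\mathrm{Ext}^{\bullet}(\Sigma^{3,1}\cU^{\vee}, \Sigma^{3,1}\cU^{\vee}) = \Bbbk$. Using the identification $(\Sigma^{3,1,0}\cU^{\vee})^{\vee} \cong \Sigma^{3,1,0}\cU$, this rewrites as
\begin{equation*}
\mathrm{Ext}^{\bullet}(\Sigma^{3,1}\cU^{\vee}, \Sigma^{3,1}\cU^{\vee}) = H^{\bullet}(X, \Sigma^{3,1,0}\cU \otimes \Sigma^{3,1,0}\cU^{\vee}).
\end{equation*}
I would apply the Littlewood--Richardson rule (Lemma~\ref{lemma:lrr}) with $\alpha = \beta = (3,1,0)$ to decompose this tensor product as $\bigoplus \Sigma^{\gamma}\cU^{\vee}$, where the dominant weights $\gamma = (\gamma_1, \gamma_2, \gamma_3)$ satisfy $0 \le \gamma_1 \le 3$, $-1 \le \gamma_2 \le 1$, $-3 \le \gamma_3 \le 0$, $\gamma_1 \ge \gamma_2 \ge \gamma_3$, and $\gamma_1 + \gamma_2 + \gamma_3 = 0$. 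This leaves a short finite list of candidate weights, which I would enumerate explicitly.

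For each such $\gamma$, I would then apply the Borel--Bott--Weil theorem (Theorem~\ref{12}) to the summand $\Sigma^{\gamma}\cU^{\vee}$ on $X$. With $\rho = (4,3,2,1)$, the key sequence is $\gamma + \rho = (\gamma_1+4, \gamma_2+3, \gamma_3+2, 1)$, and I would check whether its entries have distinct nonzero absolute values. The trivial summand $\gamma = (0,0,0)$ yields $(4,3,2,1)$, which is already strictly decreasing and positive, contributing $H^0 = \Bbbk$. For every other weight in the list, the last entry $1$ together with the narrow ranges for $\gamma_2+3$ and $\gamma_3+2$ force either a zero or a repeated absolute value, so Corollary~\ref{11} (or directly the vanishing clause of Theorem~\ref{12}) kills the cohomology.

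Combining these vanishings with the single surviving $H^0$ gives $\mathrm{Ext}^{\bullet}(\Sigma^{3,1}\cU^{\vee}, \Sigma^{3,1}\cU^{\vee}) = \Bbbk$, proving exceptionality. The only real work is the careful case analysis: the main obstacle is simply to be thorough in listing all dominant $\gamma$ permitted by Lemma~\ref{lemma:lrr} and verifying the Borel--Bott--Weil vanishing condition for each, but no single case should be difficult since $\gamma_3 + 2$ and the last coordinate $1$ collide in absolute value for most candidates.
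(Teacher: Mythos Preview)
Your proposal is correct and follows essentially the same approach as the paper: decompose $\Sigma^{3,1}\cU \otimes \Sigma^{3,1}\cU^{\vee}$ via Littlewood--Richardson and kill all nontrivial summands by Borel--Bott--Weil (Corollary~\ref{11}). The paper streamlines the case analysis by observing once and for all that $\gamma_3 \in [-3,0]$ with $\gamma_3 \ne 0$ forces $|\gamma_3+2| \le 1 = |x_4|$, so only $\gamma_3 = 0$ can survive, which together with dominance and $\sum \gamma_i = 0$ pins down $\gamma = (0,0,0)$ without enumeration.
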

\begin{proof}
By the Littlewood–Richardson rule we have
\begin{equation*}
\mathrm{Ext}^{\bullet}(\Sigma^{3,1}\cU^{\vee},\Sigma^{3,1}\cU^{\vee}) =
\mathrm{H}^{\bullet}(X,\Sigma^{3,1}\cU\otimes \Sigma^{3,1}\cU^{\vee}) =
\bigoplus \mathrm{H}^{\bullet}(X,\Sigma^{\gamma}\cU^{\vee}),
\end{equation*}
where for each $\gamma = (\gamma_1,\gamma_2,\gamma_3)$ we have $\gamma_3\in[-3,0]$ by Lemma~\ref{lemma:lrr}.
Using Corollary~\ref{11} we see that a summand $\Sigma^\gamma\cU^\vee$ can have non-zero cohomology groups only if $\gamma_3=0$.
But since $\gamma_1\ge\gamma_2\ge\gamma_3$ and~$\gamma_3+\gamma_2+\gamma_1=0$ (again by Lemma~\ref{lemma:lrr}), we conclude that
the only summand having non-zero cohomology groups is $\Sigma^{\gamma}\cU^{\vee}$ with~$\gamma=(0,0,0)$,
and hence $\Sigma^{3,1}\cU^{\vee}$ is exceptional.
\end{proof}

\begin{lemma}
\label{18}
For $(0,0) \le \alpha \le(2,1)$ and $1 \le k \le 6$, we have
\begin{equation*}
\mathrm{Ext}^{\bullet}(\Sigma^{\alpha}\cU^{\vee}(k),\Sigma^{3,1}\cU^{\vee})=\begin{cases}
\Bbbk[-9], &\text{if $\alpha=(2,1)$ and $k=5$};\\
0, &\text{in all other cases}.
\end{cases}
\end{equation*}
\end{lemma}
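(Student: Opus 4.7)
The plan is to follow the strategy of the proof of Lemma~\ref{19}. Writing
\begin{equation*}
\mathrm{Ext}^{\bullet}(\Sigma^{\alpha}\cU^{\vee}(k),\Sigma^{3,1}\cU^{\vee}) =
H^{\bullet}(X,\,\Sigma^{\alpha}\cU \otimes \Sigma^{3,1}\cU^{\vee}(-k)),
\end{equation*}
I would first apply the Littlewood--Richardson rule (Lemma~\ref{lemma:lrr}) to decompose the tensor product into irreducible summands $\Sigma^{\gamma}\cU^{\vee}$, where $\gamma=(\gamma_1,\gamma_2,\gamma_3)$ is dominant with $\gamma_1 \in [0,3]$, $\gamma_2 \in [-1,1]$, $\gamma_3 \in [-2,0]$, and subject to the degree constraint $\gamma_1+\gamma_2+\gamma_3 = 4-(\alpha_1+\alpha_2+\alpha_3) \ge 1$. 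Setting $(x_1,x_2,x_3,x_4) := (\gamma_1-k+4,\gamma_2-k+3,\gamma_3-k+2,1)$ then reduces the task to a case analysis based on Theorem~\ref{12} and Corollary~\ref{11}.

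For $k \in \{1,2,3\}$, Corollary~\ref{11} supplies the vanishing by the same kind of interval argument as in Lemma~\ref{19}: for each such $k$, at least $k+1$ of the entries $|x_i|$ automatically lie in $[0,k]$ for every admissible $\gamma$, which forces all cohomology groups to vanish regardless of the Littlewood--Richardson decomposition.

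For $k \in \{4,5,6\}$, Corollary~\ref{11} no longer suffices by itself, so I would combine the distinctness clause of Theorem~\ref{12} with the degree constraint from Littlewood--Richardson. A short direct inspection shows that the sequences $(x_1,x_2,x_3,x_4)$ with distinct nonzero absolute values come only from $\gamma=(3,-1,-2)$ when $k=4$, from $\gamma\in\{(0,0,0),(0,0,-1),(0,0,-2),(0,-1,-1),(0,-1,-2)\}$ when $k=6$, and from $\gamma=(3,-1,-1)$ when $k=5$. In every case but the last, the value of $\sum \gamma_i$ would force $\sum \alpha_i \ge 4$, which is incompatible with $\alpha \le (2,1)$; hence none of these $\gamma$ actually appears in the decomposition, and the corresponding Ext groups vanish. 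For the single surviving case, $\gamma=(3,-1,-1)$ has $\sum \gamma_i = 1$, which forces $\alpha=(2,1)$, and the Littlewood--Richardson multiplicity of $\gamma$ in $\Sigma^{2,1}\cU\otimes\Sigma^{3,1}\cU^{\vee}$ is exactly~$1$.

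It then remains to carry out the Borel--Bott--Weil computation for this unique surviving case: the sequence $(x_1,x_2,x_3,x_4)=(2,-3,-4,1)$ has absolute values equal to the entries of $\rho=(4,3,2,1)$, so the Weyl group element bringing it to dominant form produces $\sigma(\alpha+\rho)-\rho=0$, i.e.\ the trivial weight. A direct count of positive roots of $C_4$ sent to negative roots by the corresponding signed permutation yields $\ell(\sigma)=9$, so $\mathrm{Ext}^{\bullet}(\Sigma^{2,1}\cU^{\vee}(5),\Sigma^{3,1}\cU^{\vee}) = \Bbbk[-9]$. The main potential obstacle is this final length computation in $\mathbf{S}_{4}\ltimes(\mathbb{Z}/2\mathbb{Z})^{4}$, together with the verification of the Littlewood--Richardson multiplicity in the surviving case; both are routine but require care, whereas the rest of the argument reduces to systematic bookkeeping parallel to the proof of Lemma~\ref{19}.
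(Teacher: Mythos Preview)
Your approach is essentially the same as the paper's: reduce to irreducible summands via Littlewood--Richardson, then apply Corollary~\ref{11} and the sum constraint from Lemma~\ref{lemma:lrr} to kill almost everything, isolating the single survivor $\gamma=(3,-1,-1)$ at $k=5$.

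Two small bookkeeping slips to fix. First, your uniform claim for $k\in\{1,2,3\}$ that ``at least $k+1$ of the entries $|x_i|$ lie in $[0,k]$'' fails for $k=3$ when $\gamma_1=3$: then $x_1=4$ and only three entries lie in $[0,3]$. The vanishing still holds, but via $m=1$ (since $|x_2|\le 1$ and $|x_4|=1$), which is exactly how the paper handles $k=3$. Second, your enumeration for $k=5$ misses $\gamma=(3,-1,-2)$, which also gives distinct nonzero absolute values $(2,-3,-5,1)$; it is harmless because $\sum\gamma_i=0$ forces $\sum\alpha_i=4$, but it should appear in your list of candidates before being excluded by degree (the paper excludes it by the same sum argument). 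With these two corrections your proof goes through and matches the paper's.
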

\begin{proof}
The computation is similar to the one in the proof of Lemma~\ref{19}.
First of all, we have
$\mathrm{Ext}^{\bullet}(\Sigma^{\alpha}\cU^{\vee}(k),\Sigma^{3,1}\cU^{\vee})=
\mathrm{H}^{\bullet}(X,\Sigma^{\alpha}\cU \otimes \Sigma^{3,1}\cU^\vee(-k))$.
By the Littlewood--Richardson rule we see that
\begin{equation*}
\Sigma^{\alpha}\cU\otimes \Sigma^{3,1}\cU^{\vee}(-k)= \bigoplus \Sigma^{\gamma}\cU^{\vee}(-k)
\end{equation*}
and we need to compute $\mathrm{H}^{\bullet}(X,\Sigma^{\gamma}\cU^{\vee}(-k))$.
Here $\gamma=(\gamma_1,\gamma_2,\gamma_3)$ and by Lemma~\ref{lemma:lrr}
we have $\gamma_1\in [0,3]$ $\gamma_2 \in [-1,1],$ $\gamma_3\in [-2,0]$, and~$\gamma_1+\gamma_2+\gamma_3 \in [1,4]$.
A more careful analysis of the Littlewood--Richardson rule shows that $\gamma_1 \ge 1$, so that $\gamma_1 \in [1,3]$.

Let us denote $(x_1,x_2,x_3,x_4)=(\gamma_1-k,\gamma_2-k,\gamma_3-k,0)+\rho=(\gamma_1+4-k,\gamma_2+3-k,\gamma_3+2-k,1)$.
So we get that $x_1\in [5-k,7-k],$ $x_2 \in [2-k,4-k],$ $x_3=[-k,2-k],$ $x_4=1$ and $x_1+x_2+x_3+x_4\in [11-3k,14-3k]$.
In each of the next six cases we apply Corollary~\ref{11} to conclude that the cohomology vanishes.
\begin{itemize}
\item
If $k=1$ then $|x_3|\le 1$ and $x_4=1$ so
all cohomology groups vanish.
\item
If $k=2$ then $|x_3|\le 2,$ $|x_2|\le 2$ and $x_4=1$ so
all cohomology groups vanish.
\item
If $k=3$ then $|x_2|\le 1$ and $x_4=1$ so
all cohomology groups vanish.
\item
For $k=4$ we have $|x_2|\le 2$ and $x_4=1,$ so if $|x_1|\le 2$ or $|x_2| \le 1$ then
all cohomology groups vanish.
The only remaining case is when
$x_1=3,$ $x_2=-2$.
But then $x_4+x_3+x_2+x_1=-2$ and this is a contradiction with the condition $x_4+x_3+x_2+x_1\in [-1,2].$
\item
For $k=5$ we see that if $|x_2|\le 2$ or $x_1 \le 1$ or $x_3=-3$ then
all cohomology groups vanish.
If~$x_1=2,$ $x_2=-3$ and $x_3=-5$ then $x_4+x_3+x_2+x_1=-5$ and this is a contradiction with the condition $x_4+x_3+x_2+x_1\in [-4,-1]$.
If $x_1=2,$ $x_2=-3$ and~$x_3=-4$ then for the corresponding bundle $\Sigma^{3,-1,-1}\cU^{\vee}(-5)$
we have $\mathrm{H}^{9}(\Sigma^{3,-1,-1}\cU^{\vee}(-5))=\Bbbk$ and since~$\Sigma^{3,-1,-1}\cU^{\vee}(-5)$
has multiplicity one in the direct sum decomposition of~$\Sigma^{2,1}\cU \otimes \Sigma^{3,1}\cU^{\vee}(-5)$
we conclude that~$\mathrm{Ext}^{9}(\Sigma^{2,1}\cU^{\vee}(5),\Sigma^{3,1}\cU^{\vee})=\Bbbk$.
\item
If $k=6$ then $|x_1|\le 1$ and $x_4=1$ so
all cohomology groups vanish.
\end{itemize}
This completes the proof of the lemma.
\end{proof}

Using Serre duality and~\eqref{eq:kx}, we deduce

\begin{corollary} \label{17}
For $(0,0) \le \alpha \le(2,1)$ and $0 \le k \le 5$, we have
\begin{equation*}
\mathrm{Ext}^{\bullet}(\Sigma^{3,1}\cU^{\vee}(k), \Sigma^{\alpha}\cU^{\vee}) \cong
\begin{cases}
\Bbbk[-3], &\text{if $\alpha=(2,1)$ and $k = 1$};\\
0, & \text{otherwise}.
\end{cases}
\end{equation*}
\end{corollary}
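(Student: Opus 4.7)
The plan is to deduce this corollary mechanically from Lemma~\ref{18} by applying Serre duality on $X = \IGr(3,V)$. Since $\dim X = 12$ and $\omega_X \cong \cO(-6)$ by~\eqref{eq:kx}, Serre duality takes the form
\[
\mathrm{Ext}^{i}(A, B) \cong \mathrm{Ext}^{12 - i}(B, A(-6))^{\vee}
\]
for any objects $A, B \in \Db(X)$.

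First I would apply this identity with $A = \Sigma^{3,1}\cU^{\vee}(k)$ and $B = \Sigma^{\alpha}\cU^{\vee}$; after rewriting the twist as a twist of the first argument this gives an isomorphism
\[
\mathrm{Ext}^{\bullet}(\Sigma^{3,1}\cU^{\vee}(k), \Sigma^{\alpha}\cU^{\vee}) \cong \mathrm{Ext}^{12-\bullet}(\Sigma^{\alpha}\cU^{\vee}(6-k), \Sigma^{3,1}\cU^{\vee})^{\vee}.
\]
Setting $k' = 6 - k$, the interval $0 \le k \le 5$ corresponds exactly to $1 \le k' \le 6$, which is precisely the range treated in Lemma~\ref{18}.

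Invoking that lemma, the right-hand side vanishes in all cases except $\alpha = (2,1)$ and $k' = 5$ (equivalently $k = 1$), where the lemma gives a copy of $\Bbbk$ sitting in cohomological degree $9$. Under the Serre duality isomorphism above this translates to $\Bbbk$ sitting in cohomological degree $12 - 9 = 3$ on the left-hand side, producing $\Bbbk[-3]$ and matching the stated exceptional value. No serious obstacle is expected: the real content is entirely in Lemma~\ref{18}, and the present corollary amounts to a routine reindexing, with the only thing to double-check being the bookkeeping of twists and cohomological shifts under duality.
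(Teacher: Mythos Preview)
Your proposal is correct and matches the paper's own argument exactly: the paper simply writes ``Using Serre duality and~\eqref{eq:kx}, we deduce\ldots'' and leaves the bookkeeping implicit, which is precisely what you have spelled out. The reindexing $k' = 6 - k$ and the degree shift $12 - 9 = 3$ are both right.
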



We will also need the following lemmas.
\begin{lemma} \label{all31}
For $(0,0) \le \alpha \le(2,1)$ we have
\begin{equation*}
\mathrm{Hom}(\Sigma^{\alpha}\cU^{\vee},\Sigma^{3,1}\cU^{\vee})=\begin{cases}
\Sigma^{3,1}V/S^2V, &\text{if $\alpha=0$};\\
(S^2V\otimes V)/V , &\text{if $\alpha=(1,0,0)$};\\
S^2V, &\text{if $\alpha=(1,1,0)$};\\
(V\otimes V)/\omega, &\text{if $\alpha=(2,0,0)$};\\
V, &\text{if $\alpha=(2,1,0)$}.
\end{cases}
\end{equation*}
All other $\mathrm{Ext}^{\bullet}(\Sigma^{\alpha}\cU^{\vee},\Sigma^{3,1}\cU^{\vee})$ are equal to zero.
\end{lemma}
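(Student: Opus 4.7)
The strategy is identical to that of Lemmas~\ref{19} and~\ref{18}. First I would rewrite
\[
\mathrm{Ext}^{\bullet}(\Sigma^{\alpha}\cU^{\vee},\Sigma^{3,1}\cU^{\vee}) \cong \mathrm{H}^{\bullet}(X, \Sigma^{\alpha}\cU \otimes \Sigma^{3,1}\cU^{\vee})
\]
and then apply the Littlewood--Richardson rule (Lemma~\ref{lemma:lrr}) to decompose the tensor product as a sum of bundles $\Sigma^{\gamma}\cU^{\vee}$ with $\gamma_{1}\in[1,3]$, $\gamma_{2}\in[-1,1]$, $\gamma_{3}\in[-2,0]$ (the bound $\gamma_{1}\ge 1$ coming from the same finer LR analysis already used in the proof of Lemma~\ref{18}). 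The cohomology of each summand will then be computed with the Borel--Bott--Weil theorem.

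Setting $(x_{1},x_{2},x_{3},x_{4}):=(\gamma_{1}+4,\gamma_{2}+3,\gamma_{3}+2,1)$, the ranges above give $x_{4}=1$ and $x_{3}\in[0,2]$. Corollary~\ref{11} kills every summand with $x_{3}=0$ (a zero entry in $\gamma+\rho$) or $x_{3}=1$ (a repeated absolute value with $x_{4}$), so only the case $\gamma_{3}=0$ can contribute. When $\gamma$ is $\GL$-dominant and $\gamma_{3}=0$ one automatically has $x_{1}>x_{2}>x_{3}=2>x_{4}=1>0$, hence $\sigma=e$, $\ell(\sigma)=0$, and Theorem~\ref{12} yields
\[
\mathrm{H}^{0}(X,\Sigma^{\gamma}\cU^{\vee})=\Sigma^{(\gamma_{1},\gamma_{2},0,0)}_{\mathrm{Sp}}V,
\]
together with the vanishing of all higher cohomology. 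In particular, this already establishes that $\mathrm{Ext}^{>0}(\Sigma^{\alpha}\cU^{\vee},\Sigma^{3,1}\cU^{\vee})=0$, which is the second assertion of the lemma.

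It then remains, for each of the five values of $\alpha$, to pick out from the Littlewood--Richardson decomposition of $\Sigma^{\alpha}\cU\otimes\Sigma^{3,1}\cU^{\vee}$ the $\GL$-dominant weights $\gamma$ with $\gamma_{3}=0$ and to sum the associated $\mathrm{Sp}(V)$-representations. A direct LR computation gives the following lists: the single weight $(3,1,0)$ for $\alpha=(0,0,0)$; the weights $(3,0,0)$ and $(2,1,0)$ for $\alpha=(1,0,0)$; only $(2,0,0)$ for $\alpha=(1,1,0)$; the weights $(2,0,0)$ and $(1,1,0)$ for $\alpha=(2,0,0)$; and only $(1,0,0)$ for $\alpha=(2,1,0)$. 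Each identification with the closed form in the statement then reduces to standard symplectic branching identities such as $\Sigma^{3,1}V\cong\Sigma^{3,1}_{\mathrm{Sp}}V\oplus S^{2}V$, $V\otimes V\cong S^{2}V\oplus\Sigma^{1,1}_{\mathrm{Sp}}V\oplus\Bbbk\omega$, and $S^{2}V\otimes V\cong S^{3}V\oplus\Sigma^{2,1}_{\mathrm{Sp}}V\oplus V$.

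The main obstacle will be the Littlewood--Richardson bookkeeping for the larger cases $\alpha=(2,0,0)$ and $\alpha=(2,1,0)$, where one has to track precisely which $\gamma$ with $\gamma_{3}=0$ actually occur and check that each comes with multiplicity one. The cohomological half of the argument, by contrast, is entirely parallel to the vanishing patterns already developed in Lemmas~\ref{19} and~\ref{18}.
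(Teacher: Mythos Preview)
Your proposal is correct and follows exactly the approach indicated in the paper, which simply states that the lemma follows from the Littlewood--Richardson rule and the Borel--Bott--Weil theorem. You have spelled out in detail what the paper leaves implicit; in particular your identification of the surviving summands $\gamma$ with $\gamma_3=0$ for each $\alpha$ and the matching with the stated $\mathrm{Sp}(V)$-representations is accurate.
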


\begin{proof}
The statement of the lemma follows from the Littlewood--Richardson rule and the Borel--Bott--Weil theorem.
\end{proof}
\begin{lemma} \label{31(3)}
We have
\begin{equation*}
\mathrm{Ext}^{\bullet}(\Sigma^{3,1}\cU^{\vee}(3), \Sigma^{3,1}\cU^{\vee}) = 0.
\end{equation*}
\end{lemma}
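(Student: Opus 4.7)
The plan is to reduce to a cohomology computation on $X$ and apply the Littlewood--Richardson rule together with Corollary~\ref{11}, following the same strategy as in Lemmas~\ref{19},~\ref{31U}, and~\ref{18}. Explicitly, I would rewrite
\begin{equation*}
\mathrm{Ext}^{\bullet}(\Sigma^{3,1}\cU^{\vee}(3),\Sigma^{3,1}\cU^{\vee})
= \mathrm{H}^{\bullet}\bigl(X,\Sigma^{3,1}\cU\otimes \Sigma^{3,1}\cU^{\vee}(-3)\bigr)
\end{equation*}
and decompose the tensor product into irreducible summands
$\Sigma^{\gamma_1,\gamma_2,\gamma_3}\cU^{\vee}(-3)$ via the Littlewood--Richardson rule (Lemma~\ref{lemma:lrr}), where $\gamma_1\in[0,3]$, $\gamma_2\in[-1,1]$, $\gamma_3\in[-3,0]$ and $\gamma_1+\gamma_2+\gamma_3=0$.

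For each such summand I would compute the shifted weight
\begin{equation*}
(\gamma_1-3,\gamma_2-3,\gamma_3-3,0)+\rho = (\gamma_1+1,\,\gamma_2,\,\gamma_3-1,\,1),
\end{equation*}
where $\rho=(4,3,2,1)$. The crucial observation is that the fourth entry equals $1$ and the second entry $\gamma_2$ lies in $[-1,1]$, so both have absolute value at most~$1$. Therefore Corollary~\ref{11} applies with $m=1$, since two of the four coordinates of $\alpha+\rho$ satisfy $|\alpha_i+\rho_i|\le 1$, and every such summand has vanishing cohomology. This yields the desired conclusion.

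There is essentially no obstacle: the twist by $\cO(-3)$ is exactly what is needed to push the $\rho$-shifted second coordinate into the interval $[-1,1]$ and keep the fourth coordinate at $1$, so Corollary~\ref{11} applies uniformly to every Littlewood--Richardson summand without any case analysis. No single summand requires a direct Borel--Bott--Weil computation, in contrast with the $k=5$ case in Lemma~\ref{18} where a nonzero cohomology group had to be isolated.
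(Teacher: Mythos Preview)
Your proof is correct and follows essentially the same approach as the paper: both decompose via the Littlewood--Richardson rule and observe that, after adding $\rho$, the second and fourth coordinates have absolute value at most~$1$, so Corollary~\ref{11} with $m=1$ kills every summand. The only cosmetic difference is that the paper absorbs the twist by $\cO(-3)$ into $\gamma$ first (writing $\gamma_2\in[-4,-2]$ and then $x_2=\gamma_2+3\in[-1,1]$), whereas you keep the twist separate and compute $(\gamma_1+1,\gamma_2,\gamma_3-1,1)$ directly; the key observation and the conclusion are identical.
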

\begin{proof}
Using the Littlewood--Richardson rule we have
\begin{equation*}
\mathrm{Ext}^{\bullet}(\Sigma^{3,1}\cU^{\vee}(3), \Sigma^{3,1}\cU^{\vee}) =
\bigoplus \mathrm{H}^{\bullet}(X,\Sigma^{\gamma}\cU^{\vee}),
\end{equation*}
where for $\gamma=(\gamma_1,\gamma_2,\gamma_3)$ we have $\gamma_2\in[-4,-2]$ by Lemma~\ref{lemma:lrr}.
So for $(x_1,x_2,x_3,x_4) = \gamma + \rho$ we have~$x_2\in[-1,1]$ and~$x_4 = 1$.
Hence using Corollary~\ref{11} we deduce that all cohomology groups vanish.
\end{proof}

\subsection{Some semiorthogonalities}


Here we establish some semiorthogonalities that will be useful later.

\begin{lemma} \label{lemma:32}
The bundles $\cU^\vee$, $\cO$, $\cO(-1)$ and $\Lambda^2\cU^\vee(-2)$ are left orthogonal to $\Sigma^{3,2}\cU^\vee(-3)$.
Moreover, we have $\mathrm{Ext}^{\bullet}(S^2\cU^{\vee},\Sigma^{3,2}\cU^\vee(-3))=\Bbbk[-4]$.
\end{lemma}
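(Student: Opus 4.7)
The plan is to follow the pattern of Lemmas~\ref{19} and~\ref{18}: in each of the five cases I rewrite
\[
\mathrm{Ext}^{\bullet}(E, \Sigma^{3,2}\cU^{\vee}(-3)) = H^\bullet(X, E^{\vee} \otimes \Sigma^{3,2}\cU^{\vee}(-3)),
\]
use Lemma~\ref{lemma:lrr} to decompose $E^{\vee} \otimes \Sigma^{3,2}\cU^{\vee}$ into irreducible $\mathrm{GL}_3$-equivariant summands $\Sigma^{\gamma}\cU^{\vee}$, and then apply the Borel--Bott--Weil theorem (Theorem~\ref{12}) together with Corollary~\ref{11} to each shifted weight $(\gamma_1 - k + 4,\ \gamma_2 - k + 3,\ \gamma_3 - k + 2,\ 1)$, where $k$ denotes the total negative twist.

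For the four vanishing claims I enumerate the finitely many dominant triples $\gamma$ permitted by Lemma~\ref{lemma:lrr} and check in each case that the shifted weight has either a zero entry or a pair of entries with equal absolute values; Corollary~\ref{11} then forces all cohomology to vanish. I expect this to be driven uniformly by the constant entry $x_4 = 1$ together with the smallness of $|x_3|$, which in every case must collide with either $x_4$ or $|x_2|$.

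For the remaining claim, the analogous enumeration over dominant $\gamma$ with $\sum \gamma_i = 3$, $\gamma_1 \in [0,3]$, $\gamma_2 \in [0, 2]$, $\gamma_3 \in [-2, 0]$ produces five candidates, of which only $\gamma = (3, 2, -2)$ gives a shifted weight $(4, 2, -3, 1)$ with distinct non-zero absolute values; the others each fail through a zero entry or a repeated absolute value. The signed permutation $\sigma = s_2 s_3 s_4 s_3 \in \bS_4 \ltimes (\ZZ/2\ZZ)^4$ takes $(4, 2, -3, 1)$ to the strictly dominant $(4, 3, 2, 1)$, has length $4$, and satisfies $\sigma(\alpha + \rho) - \rho = 0$; by Theorem~\ref{12} this summand contributes $\Bbbk$ in cohomological degree $4$. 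The remaining subtlety is to verify that $\Sigma^{3,2,-2}\cU^{\vee}$ occurs with multiplicity exactly one in $S^2\cU \otimes \Sigma^{3,2}\cU^{\vee}$; using the isomorphism $S^2\cU \cong \Sigma^{2,2}\cU^{\vee} \otimes \cO(-2)$ this reduces to the classical Littlewood--Richardson count of tableaux of shape $(5,4)/(3,2)$ with content $(2,2)$, and a direct enumeration produces a unique such tableau. Combining these facts yields $\mathrm{Ext}^{\bullet}(S^2\cU^{\vee}, \Sigma^{3,2}\cU^{\vee}(-3)) = \Bbbk[-4]$.
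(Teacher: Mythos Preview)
Your approach is correct and is essentially the same as the paper's: both decompose each $E^\vee\otimes\Sigma^{3,2}\cU^\vee(-3)$ via Littlewood--Richardson and apply Theorem~\ref{12}/Corollary~\ref{11} term by term, with the sole nonvanishing contribution coming from the summand $\Sigma^{3,2,-2}\cU^\vee(-3)=\Sigma^{0,-1,-5}\cU^\vee$ in the $S^2\cU^\vee$ case.

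Two small remarks. First, your outline for the four vanishing cases leaves the enumeration implicit; the paper simply writes out the three (or one) irreducible summands in each case, which is short enough that you may as well do the same rather than appeal to a generic collision argument. Second, your count of five summands for $S^2\cU\otimes\Sigma^{3,2}\cU^\vee$ is in fact sharper than the paper's list of six: the extra term $\Sigma^{-2,-2,-2}\cU^\vee$ listed there does not actually occur (the Littlewood--Richardson coefficient $c^{(3,3,3)}_{(2,2),(3,2)}$ is zero), though this is harmless since its shifted weight $(2,1,0,1)$ has a zero entry anyway. Your explicit tableau verification of multiplicity one for $\gamma=(3,2,-2)$ and the length-$4$ Weyl element are both correct.
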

\begin{proof}
Using the Littlewood--Richardson rule and Corollary~\ref{11} we have
\begin{equation*}
\operatorname{Ext}^\bullet(\cO(-k),\Sigma^{3,2}\cU^\vee(-3))=\mathrm{H}^\bullet(X,\Sigma^{0,-1,-3}\cU^\vee(k))=0
\end{equation*}
for $k\in\{0,1\}$, which gives the required semiorthogonality for $\cO$ and $\cO(-1)$.
Similarly,
we have
\begin{equation*}
\operatorname{Ext}^\bullet(\cU^{\vee},\Sigma^{3,2}\cU^\vee(-3))=
\mathrm{H}^\bullet(X,\Sigma^{-1,-1,-3}\cU^{\vee}\oplus \Sigma^{0,-2,-3}\cU^{\vee}\oplus \Sigma^{0,-1,-4}\cU^{\vee})=0,
\end{equation*}
which gives the required semiorthogonality for $\cU^\vee$.
Finally,
we have
\begin{equation*}
\operatorname{Ext}^\bullet(\Lambda^2\cU^\vee(-2),\Sigma^{3,2}\cU^\vee(-3))=
\mathrm{H}^\bullet(X,\Sigma^{1,0,-1}\cU^{\vee}\oplus \Sigma^{2,0,-2}\cU^{\vee}\oplus \Sigma^{1,1,-2}\cU^{\vee})=0.
\end{equation*}
which gives the semiorthogonality for $\Lambda^2\cU^\vee(-2)$ and
completes the proof of the first part of the lemma.

For the second part we use again the Littlewood--Richardson rule and Corollary~\ref{11} to conclude
\begin{multline*}
\mathrm{Ext}^{\bullet}(S^2\cU^{\vee},\Sigma^{3,2}\cU^\vee(-3))= \\
=\mathrm{H}^{\bullet}(X, \Sigma^{0,-1,-5}\cU^{\vee} \oplus \Sigma^{0,-3,-3}\cU^{\vee}\oplus\Sigma^{0,-2,-4}\cU^{\vee}\oplus\Sigma^{-2,-2,-2}\cU^{\vee}
\oplus\Sigma^{-1,-1,-4}\cU^{\vee}\oplus\Sigma^{-1,-2,-3}\cU^{\vee}) = \\ =
\Bbbk[-4],
\end{multline*}
where the nontrivial contribution comes from the first summand.
\end{proof}

\begin{lemma} \label{T1ort}
The vector bundles $\Sigma^{3,3}\cU^{\vee}(-4),$ $\Sigma^{2,2}\cU^{\vee}(-3)$ and $\Sigma^{3,2}\cU^{\vee}(-3)$
are right orthogonal to the exceptional collection~$\langle \mathcal{O},\cU^{\vee},\Lambda^{2}\cU^{\vee},\Sigma^{2,1}\cU^{\vee}\rangle.$
\end{lemma}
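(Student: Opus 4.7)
The plan is to verify, for each of the three bundles $E \in \{\Sigma^{3,3}\cU^{\vee}(-4), \Sigma^{2,2}\cU^{\vee}(-3), \Sigma^{3,2}\cU^{\vee}(-3)\}$ and each generator $F \in \{\cO, \cU^{\vee}, \Lambda^2\cU^{\vee}, \Sigma^{2,1}\cU^{\vee}\}$, that $\mathrm{Ext}^{\bullet}(F, E) = 0$, following the template of Lemma~\ref{lemma:32}. Using the identification
\begin{equation*}
\mathrm{Ext}^{\bullet}(\Sigma^{\beta}\cU^{\vee}, E) = \mathrm{H}^{\bullet}(X, \Sigma^{\beta}\cU \otimes E)
\end{equation*}
together with $\Sigma^{\beta_1,\beta_2,\beta_3}\cU = \Sigma^{-\beta_3,-\beta_2,-\beta_1}\cU^{\vee}$, this reduces to twelve cohomology computations on $X$.

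The main step is to decompose each $\Sigma^{\beta}\cU \otimes E$ into a sum of irreducible $\mathrm{GL}_3$-equivariant summands $\Sigma^{\gamma}\cU^{\vee}(-t)$ via the Littlewood--Richardson rule, with the entries of $\gamma$ bounded using Lemma~\ref{lemma:lrr}, and then to apply Corollary~\ref{11} to each summand. Two of the twelve cases are already done in Lemma~\ref{lemma:32} (namely $F \in \{\cO, \cU^{\vee}\}$ with $E = \Sigma^{3,2}\cU^{\vee}(-3)$), and the four $F = \cO$ cases collapse to the one-line Borel--Bott--Weil check that $\mathrm{H}^{\bullet}(X, E) = 0$ (for each of the three $E$'s the vector $E + \rho$ already has a repeated absolute value, coming from the trailing $\rho_4 = 1$), so only a handful of genuinely new computations remain.

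For each remaining pair I would list the Littlewood--Richardson summands $\Sigma^{\gamma}\cU^{\vee}$ of $\Sigma^{\beta}\cU \otimes \Sigma^{E'}\cU^{\vee}$, combine each with the twist $\cO(-t)$ inherited from $E$ to get a weight $\alpha$, form $\alpha + \rho = (\alpha_1+4,\alpha_2+3,\alpha_3+2,1)$, and exhibit an integer $m$ for which the hypothesis of Corollary~\ref{11} holds. Because each $E$ carries a negative twist by $\cO(-3)$ or $\cO(-4)$ and both $F^{\vee}$ and $E$ are small, at least two of the first three coordinates of $\alpha + \rho$ land in a small interval around $0$, which together with the trailing~$1$ makes the vanishing criterion apply with $m \in \{1,2,3\}$. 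In the few borderline summands where Corollary~\ref{11} does not immediately suffice, the degree relation $\sum \gamma_i = \sum E'_i - \sum F'_i$ from Lemma~\ref{lemma:lrr} pins $\gamma$ down essentially uniquely, and one checks directly from Theorem~\ref{12} that $\alpha + \rho$ has a repeated absolute value, so the cohomology still vanishes.

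I do not anticipate any serious obstacle; the argument parallels Lemmas~\ref{lemma:32} and~\ref{18} closely. The only care needed is bookkeeping: tracking the line-bundle shift correctly when going from $\Sigma^\gamma\cU^\vee$ to $\Sigma^\gamma\cU^\vee(-t)$, not overlooking any Littlewood--Richardson summand, and separating out the borderline weights where Corollary~\ref{11} must be replaced by a direct appeal to Theorem~\ref{12}. Noting that $S^2\cU^{\vee}$ is deliberately \emph{absent} from the collection in the statement serves as a useful sanity check, since Lemma~\ref{lemma:32} shows $\mathrm{Ext}^{4}(S^2\cU^{\vee}, \Sigma^{3,2}\cU^{\vee}(-3)) \ne 0$; so the present lemma is as sharp as possible with the given generators.
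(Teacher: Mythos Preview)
Your proposal is correct and follows the same strategy as the paper: reduce to cohomology of Littlewood--Richardson summands and kill them with Corollary~\ref{11}, with the observation that $S^2\cU^{\vee}$ is precisely the missing generator that would obstruct vanishing. The paper's execution is slightly more economical: rather than treating the twelve pairs separately, it bounds all cases at once by noting that for any $\mathcal{E}_1$ in the collection and any of the three $\mathcal{E}_2$'s, every summand $\Sigma^\gamma\cU^\vee$ of $\mathcal{E}_1^\vee\otimes\mathcal{E}_2$ has $\gamma_1\in[-2,0]$, $\gamma_2\in[-3,-1]$, $\gamma_3\in[-6,-3]$, hence $x=\gamma+\rho$ has $x_1\in[2,4]$, $x_2\in[0,2]$, $x_3\in[-4,-1]$, $x_4=1$; Corollary~\ref{11} then forces any surviving $x$ to be $(4,2,-3,1)$ or $(3,2,-4,1)$, i.e.\ $\gamma=(0,-1,-5)$ or $(-1,-1,-6)$, and by Littlewood--Richardson such $\gamma$ can only arise from $S^2\cU\otimes\mathcal{E}_2$, which is excluded. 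Your case-by-case plan reaches the same conclusion with more bookkeeping; the paper's uniform bound is the cleaner packaging of the identical idea.
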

\begin{proof}
We want to prove that $\mathrm{Ext}^{\bullet}(\mathcal{E}_1,\mathcal{E}_2)=0,$ where $\mathcal{E}_1$ is one of the bundles
from the exceptional collection $(\mathcal{O},\cU^{\vee},\Lambda^{2}\cU^{\vee},\Sigma^{2,1}\cU^{\vee})$
and $\mathcal{E}_2$ is either $\Sigma^{3,3}\cU^{\vee}(-4)$ or $\Sigma^{2,2}\cU^{\vee}(-3)$ or $\Sigma^{3,2}\cU^{\vee}(-3)$.

We have
\begin{equation*}
\mathrm{Ext}^{\bullet}(\mathcal{E}_1,\mathcal{E}_2)
=\mathrm{H}^{\bullet}(X,\mathcal{E}_1^{\vee}\otimes \mathcal{E}_2)
=\bigoplus \mathrm{H}^{\bullet}(X,\Sigma^{\gamma}\cU^{\vee}),
\end{equation*}
where for $\gamma=(\gamma_1,\gamma_2, \gamma_3)$ we have $\gamma_1\in[-2,0],$  $\gamma_2\in[-3,-1]$ and $\gamma_3\in[-6,-3]$
by Lemma~\ref{lemma:lrr}.
Let us denote $x=(x_1,x_2,x_3,x_4) = \gamma+\rho$.
Then we have $x_1\in[2,4],$ $x_2\in [0,2]$, $x_3\in [-4,-1]$, and $x_4 = 1$.
If~$\mathrm{H}^{\bullet}(X,\Sigma^{\gamma}\cU^{\vee})\ne 0,$ then using Corollary \ref{11} we deduce
that $x_2=2,$ $x_1\in[3,4]$ and $x_3\in[-4,-3]$.
More precisely, either $x=(4,2,-3,1)$ or $x=(3,2,-4,1)$, i.e., either $\gamma=(0,-1,-5)$ or $\gamma=(-1,-1,-6)$
and using the Littlewood–Richardson rule we see that the corresponding $\Sigma^{\gamma}\cU^{\vee}$
could only come from the tensor product $S^2\cU\otimes\mathcal{E}_{2}$.
\end{proof}

Recall that we denote by $\cS$ the vector bundle $\cU^\perp/\cU$.

\begin{lemma} \label{21(-1)}
The bundle $\Sigma^{2,2}\cU^{\vee}(-3)\otimes \cS$ is right orthogonal to $\Sigma^{2,1}\cU^\vee(-1)$.
\end{lemma}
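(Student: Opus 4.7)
The plan is to compute the groups $\mathrm{Ext}^{\bullet}(\Sigma^{2,1}\cU^\vee(-1),\Sigma^{2,2}\cU^\vee(-3)\otimes\cS)$ directly, along the same lines as the proof of Lemma~\ref{T1ort}. By dualizing the first argument we identify this Ext-group with the cohomology
\begin{equation*}
\mathrm{H}^{\bullet}\!\bigl(X,\,\Sigma^{2,1}\cU\otimes\Sigma^{2,2}\cU^\vee(-2)\otimes\cS\bigr),
\end{equation*}
and then decompose $\Sigma^{2,1}\cU\otimes\Sigma^{2,2}\cU^\vee$ as a direct sum of irreducibles $\Sigma^{\gamma}\cU^\vee$ using the Littlewood--Richardson rule in the form of Lemma~\ref{lemma:lrr}. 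The constraints given there (with $\alpha=(2,1,0)$ and $\beta=(2,2,0)$) force $\gamma_1\in[0,2]$, $\gamma_2\in[-1,2]$, $\gamma_3\in[-2,0]$, together with dominance $\gamma_1\ge\gamma_2\ge\gamma_3$ and the trace condition $\gamma_1+\gamma_2+\gamma_3=1$.

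A quick enumeration leaves only the four weights $\gamma\in\{(1,0,0),(1,1,-1),(2,0,-1),(2,1,-2)\}$ as possible contributors. For each such $\gamma$ the relevant summand on $X$ is the equivariant bundle $\Sigma^{\gamma_1-2,\gamma_2-2,\gamma_3-2}\cU^\vee\otimes\cS$, which corresponds to the weight $(\gamma_1-2,\gamma_2-2,\gamma_3-2,1)$ since $\cS\cong\Sigma^{(1)}_{\mathrm{Sp}}\cS$. Adding $\rho=(4,3,2,1)$ yields, respectively,
\begin{equation*}
(3,1,0,2),\qquad (3,2,-1,2),\qquad (4,1,-1,2),\qquad (4,2,-2,2).
\end{equation*}
In the first case the third coordinate is zero, and in the remaining three cases the multiset of absolute values is $\{2,2,\ldots\}$, so there are at least three coordinates with absolute value $\le 2$; hence Corollary~\ref{11} (taking $m=0$ and $m=2$ respectively) shows that the cohomology vanishes in each case.

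Assembling these vanishings gives $\mathrm{Ext}^{\bullet}(\Sigma^{2,1}\cU^\vee(-1),\Sigma^{2,2}\cU^\vee(-3)\otimes\cS)=0$, which is the desired semiorthogonality. No serious obstacle arises: the computation is the same type of elementary Borel--Bott--Weil bookkeeping as in Lemmas~\ref{lemma:32} and~\ref{T1ort}. The only mild care needed is to remember that the $\cS$-factor contributes a fixed final coordinate equal to $1$ in the weight, which after adding $\rho$ becomes $2$ and is what allows the three non-trivial cases to be killed uniformly by Corollary~\ref{11}.
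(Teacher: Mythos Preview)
Your proof is correct and follows essentially the same approach as the paper's. The only cosmetic difference is that the paper does not enumerate the four summands explicitly: it bounds the coordinates of $x=(\gamma,1)+\rho$ as $x_2\in[1,2]$, $x_3\in[-2,0]$, $x_4=2$ and applies Corollary~\ref{11} with $m=2$ uniformly, whereas you list the four weights and treat the first with $m=0$ and the rest with $m=2$.
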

\begin{proof}
We need to compute $\mathrm{Ext}^{\bullet}(\Sigma^{2,1}\cU^{\vee}(-1),\Sigma^{2,2}\cU^{\vee}(-3)\otimes \cS)=
\mathrm{H}^{\bullet}(X,\Sigma^{2,1}\cU^{\vee}\otimes\Sigma^{2,2}\cU^{\vee}(-4)\otimes \cS)$.
Using the Littlewood–Richardson rule we see that
\begin{equation*}
\mathrm{H}^{\bullet}(X, \Sigma^{2,1}\cU^{\vee}\otimes\Sigma^{2,2}\cU^{\vee}(-4)\otimes \cS)
= \bigoplus \mathrm{H}^{\bullet}(X,\Sigma^{\gamma}\cU^{\vee}\otimes\cS),
\end{equation*}
where for $\gamma =(\gamma_1,\gamma_2, \gamma_3)$ we have $\gamma_1\in[-1,0]$,  $\gamma_2\in[-2,-1]$ and $\gamma_3\in[-4,-2]$
by Lemma~\ref{lemma:lrr}.
Let us denote by $x=(x_1,x_2,x_3,x_4)=(\gamma_1,\gamma_2, \gamma_3,1)+\rho$.
Then we have $x_2\in [1,2],$ $x_3\in [-2,0]$ and $x_4=2$, so using Corollary \ref{11} we deduce the statement.
\end{proof}

\section{The bicomplex}
\label{section:exact}

In this section we construct the bicomplex described in the Introduction
and discuss its properties.

\subsection{Koszul and staircase complexes}

We start with some well-known exact sequences.
The (dual) tautological exact sequence
\begin{equation*}
0\to \cU^{\perp}\to V\otimes \cO \to \cU^{\vee}\to 0
\end{equation*}
on $\mathrm{IGr}(3,V)$ induces for each $k$ the following long exact sequence (Koszul complex)
\begin{equation}
\label{15}
0\to \Lambda^k\cU^{\perp}\to \Lambda^kV\otimes \cO \to \Lambda^{k-1}V\otimes\cU^{\vee} \to \ldots
\to V\otimes S^{k-1}\cU^{\vee} \to S^k\cU^{\vee} \to 0.
\end{equation}
All these exact sequences are restricted from $\mathrm{Gr}(3,V)$.

We will also need so called \emph{staircase complexes} (see \cite{4}).
We will not describe the general form of these complexes here, but list those that we will use (recall that we identify $V$ and $V^{\vee}$ via $\omega$):
\begin{itemize}
\item The staircase complex for $S^2\cU^{\vee}$:
\begin{multline} \label{29}
0 \to
\Sigma^{3,3}\cU^{\vee}(-4) \to
\Lambda^7V\otimes \Sigma^{2,2}\cU^{\vee}(-3) \to
\Lambda^6V\otimes \Lambda^2\cU^{\vee}(-2) \to
\Lambda^5V \otimes \mathcal{O}(-1) \to
\\ \to
\Lambda^2V \otimes \cO
\to V\otimes\cU^{\vee} \to
S^2\cU^{\vee} \to 0.
\end{multline}
Its second line coincides with the Koszul complex~\eqref{15} for $k = 2$, and its first line is a twist of the dual of~\eqref{15} for $k = 3$.
\item The staircase complex for $\Sigma^{3,1}\cU^{\vee}$:
\begin{multline}  \label{34}
0 \to
\Sigma^{3,2}\cU^{\vee}(-3) \to
\Lambda^7V\otimes \Sigma^{2,1}\cU^{\vee}(-2) \to
\Lambda^6V\otimes \cU^{\vee}(-1) \to
\\ \to
\Lambda^4V \otimes \mathcal{O} \to
\Lambda^2V \otimes \Lambda^{2}\cU^{\vee}
\to V\otimes \Sigma^{2,1}\cU^{\vee}
\to \Sigma^{3,1}\cU^{\vee} \to 0.
\end{multline}
This complex is self-dual.
\item The staircase complex for $\Sigma^{3,2}\cU^{\vee}$:
\begin{multline}
\label{staircase}
0\to \Sigma^{4,2}\cU^{\vee}(-3)\to \Lambda^7V\otimes \Sigma^{3,1}\cU^{\vee}(-2)\to \Lambda^6V\otimes S^2\cU^{\vee}(-1) \to\\
 \to \Lambda^4V\otimes \cU^{\vee}\to \Lambda^3V\otimes \Lambda^2\cU^{\vee}\to V\otimes \Sigma^{2,2}\cU^{\vee}\to \Sigma^{3,2}\cU^{\vee}\to 0.
\end{multline}
\end{itemize}
All these complexes are also restricted from $\mathrm{Gr}(3,V)$.

Finally, for the rank-2 symplectic bundle $\cS {} = \cU^\perp/\cU$ we have the following exact sequences:
\begin{equation*}
0\to \cU \to \mathcal{U}^{\perp} \to \cS \to 0
\qquad\text{and}\qquad
0\to \cS \to V/\cU \to \cU^{\vee} \to 0.
\end{equation*}
Moreover, we have an isomorphism in $\mathrm{D}^b(X)$
\begin{equation}
\label{eq:upu-monad}
\cS \cong \Big\{ \cU \to V \otimes \cO \to \cU^\vee \Big\}.
\end{equation}

\subsection{Bicomplex}

The goal of this section is to construct a morphism of complexes from~\eqref{29} to~\eqref{34}
that we will consider as a bicomplex.
In the construction we will use the following lemma.

\begin{lemma}
\label{lemma:extension}
Let $E^\bullet$, and $F^\bullet$ be a pair of exact sequences of vector bundles.

$(1)$ Assume there is a commutative diagram
\begin{equation}
\label{eq:diagram-1}
\vcenter{\xymatrix{
\dots \ar[r] &
E_{i-1} \ar[r] &
E_{i} \ar[r] &
E_{i+1} \ar[r] &
E_{i+2} \ar[r] &
\dots
\\
\dots \ar[r] &
F_{i-1} \ar[r] &
F_{i} \ar[r] &
F_{i+1} \ar[r] \ar[u]_{f_{i+1}} &
F_{i+2} \ar[r] \ar[u]_{f_{i+2}} &
\dots \ar@{}[u]|{\textstyle\dots}
}}
\end{equation}
If $\operatorname{Ext}^\bullet(F_i,E_j) = 0$ for all $j < i$ then there is a unique morphism $f_i \colon F_i \to E_i$
such that the square to the right of it commutes.
In particular, if $f_{i+1} = 0$, then $f_i = 0$ as well.

$(2)$ Assume there is a commutative diagram
\begin{equation}
\label{eq:diagram-2}
\vcenter{\xymatrix{
\dots \ar[r] &
E_{i-2} \ar[r] &
E_{i-1} \ar[r] &
E_{i} \ar[r] &
E_{i+1} \ar[r] &
\dots
\\
\dots \ar[r] \ar@{}[u]|{\textstyle\dots} &
F_{i-2} \ar[r] \ar[u]_{f_{i-2}}  &
F_{i-1} \ar[r] \ar[u]_{f_{i-1}}  &
F_{i} \ar[r] &
F_{i+1} \ar[r] &
\dots
}}
\end{equation}
If $\operatorname{Ext}^\bullet(F_j,E_i) = 0$ for all $j > i$ then there is a unique morphism $f_i \colon F_i \to E_i$
such that the square to the left of it commutes.
In particular, if $f_{i-1} = 0$, then $f_i = 0$ as well.
\end{lemma}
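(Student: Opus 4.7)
I would prove part (1); part (2) is symmetric (run the analogous argument from the other end of the diagram). The proof is a standard homological-algebra diagram chase, so the work is in organizing the induction cleanly.

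Set $Z_j := \ker(E_j \to E_{j+1}) = \operatorname{im}(E_{j-1} \to E_j)$, which fit into short exact sequences
\[
0 \to Z_j \to E_j \to Z_{j+1} \to 0
\]
because $E^\bullet$ is exact. First I would establish the key vanishing
\[
\operatorname{Ext}^\bullet(F_i, Z_j) = 0 \qquad\text{for all } j \le i.
\]
The long exact sequence applied to the short exact sequence above gives a dimension-shift isomorphism $\operatorname{Ext}^p(F_i, Z_{j+1}) \cong \operatorname{Ext}^{p+1}(F_i, Z_j)$ whenever $\operatorname{Ext}^\bullet(F_i, E_j) = 0$, i.e.\ for all $j < i$ by hypothesis. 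Starting from the leftmost nonzero term of $E^\bullet$ (where the corresponding $Z$ either vanishes or equals some $E_k$ whose Exts with $F_i$ vanish) and iterating, all $\operatorname{Ext}^\bullet(F_i, Z_j)$ for $j \le i$ vanish; in particular $\operatorname{Hom}(F_i, Z_i) = 0$ and $\operatorname{Ext}^1(F_i, Z_i) = 0$.

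Next I would construct $f_i$. The composition $F_i \to F_{i+1} \xrightarrow{f_{i+1}} E_{i+1} \to E_{i+2}$ equals $f_{i+2}$ precomposed with the differential $F_i \to F_{i+2}$, which vanishes by exactness of $F^\bullet$. Hence the map $F_i \to E_{i+1}$ factors through $Z_{i+1} = \ker(E_{i+1} \to E_{i+2})$, producing a morphism $g \colon F_i \to Z_{i+1}$. The obstruction to lifting $g$ along the surjection $E_i \twoheadrightarrow Z_{i+1}$ lies in $\operatorname{Ext}^1(F_i, Z_i)$, which we have arranged to vanish, so a lift $f_i \colon F_i \to E_i$ exists; by construction the square to its right commutes. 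Two such lifts differ by a morphism $F_i \to Z_i$, and uniqueness follows from $\operatorname{Hom}(F_i, Z_i) = 0$. Applying uniqueness to the case $f_{i+1} = 0$ (where $g = 0$ and the zero map is a lift) gives the final assertion $f_i = 0$.

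Part (2) is proved by the dual bookkeeping: replace $Z_j$ by $Z'_j := \operatorname{im}(E_j \to E_{j+1}) = \ker(E_{j+1} \to E_{j+2})$, use the short exact sequences $0 \to Z'_{j-1} \to E_j \to Z'_j \to 0$, and induct from the rightmost end of the complex to deduce $\operatorname{Ext}^\bullet(F_j, E_i) = 0$ for $j > i$ implies $\operatorname{Ext}^\bullet(F_j, Z'_{i-1}) = 0$ for all $j > i$ — then run the same existence/uniqueness chase to lift the map $F_i \to Z'_{i-1}$ (coming from $f_{i-1}$) to a map $F_i \to E_i$. The only subtlety, and the sole thing one needs to verify in the application, is that the induction for the Ext vanishing terminates; in our setting both complexes involved are finite (bounded truncations of Koszul and staircase complexes), so this is automatic.
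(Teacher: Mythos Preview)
Your proof of part~(1) is correct and matches the paper's argument: the paper likewise factors $F_i \to E_{i+1}$ through $E' = \ker(E_{i+1} \to E_{i+2})$ and then applies $\operatorname{Hom}(F_i,-)$ to the truncated resolution $\dots \to E_{i-1} \to E_i \to E' \to 0$, using the hypothesis $\operatorname{Ext}^\bullet(F_i,E_j)=0$ for $j<i$ to get a unique lift. Your version simply unpacks this into explicit dimension shifts along the short exact sequences $0 \to Z_j \to E_j \to Z_{j+1} \to 0$, which is the same thing.

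Your sketch of part~(2), however, has a slip. The hypothesis there is $\operatorname{Ext}^\bullet(F_j,E_i)=0$ for $j>i$: the \emph{first} argument of $\operatorname{Ext}$ varies while $E_i$ is fixed. Dimension shifting along subquotients $Z'_j$ of $E^\bullet$ would require vanishing of $\operatorname{Ext}^\bullet(F_j,E_k)$ for $k>i$, which you do not have. The correct dual argument takes subquotients of $F^\bullet$ instead: set $W_j = \operatorname{im}(F_{j-1}\to F_j) = \ker(F_j\to F_{j+1})$, use the short exact sequences $0 \to W_j \to F_j \to W_{j+1} \to 0$, and shift in the first variable to obtain $\operatorname{Ext}^\bullet(W_{i+1},E_i)=0$. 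Then the map $(E_{i-1}\to E_i)\circ f_{i-1}\colon F_{i-1}\to E_i$ descends to $W_i \hookrightarrow F_i$ (using commutativity with $f_{i-2}$), and the vanishing of $\operatorname{Hom}(W_{i+1},E_i)$ and $\operatorname{Ext}^1(W_{i+1},E_i)$ gives the unique extension $f_i\colon F_i\to E_i$. With this correction the argument goes through; the paper itself only says ``the proof of~(2) is similar.''
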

\begin{proof}
Let us prove (1); the proof of (2) is similar.
Denote by $E'$ the kernel of $E_{i+1} \to E_{i+2}$, or equivalently, the cokernel of~$E_{i-1} \to E_i$.
By commutativity of~\eqref{eq:diagram-1}, the composition $F_i \to F_{i+1} \xrightarrow{\ f_{i+1}\ } E_{i+1}$
factors through a morphism $f' \colon F_i \to E'$.
On the other hand, applying the functor $\operatorname{Hom}(F_i,-)$ to the exact sequence
\begin{equation*}
\dots \to E_{i-1} \to E_i \to E' \to 0
\end{equation*}
and using the semiorthogonality $\operatorname{Ext}^\bullet(F_i,E_j) = 0$ for $j < i$, we conclude that the morphism
$f'$ lifts in a unique way to a morphism $f_i \colon F_i \to E_i$.
The square in the diagram~\eqref{eq:diagram-1} formed by the morphisms~$f_i$ and~$f_{i+1}$ commutes by construction,
and also by construction the morphism $f_i$ with this property is unique.
\end{proof}

Now we apply the lemma to construct the required morphism of complexes.
In a contrast with the morphisms discussed above,
this morphism of complexes is \emph{not} restricted from the Grassmannian~$\Gr(3,V)$.

\begin{proposition}
\label{bicomplex}
Consider the exact sequences~\eqref{29} and~\eqref{34} with the grading in which the degree zero terms
are $\Lambda^2V \otimes \cO$ and $\Lambda^4V \otimes \cO$ respectively.
Then there is a unique up to rescaling nonzero $\operatorname{Sp}(V)$-equivariant morphism of complexes from~\eqref{29} to~\eqref{34}.
The components
\begin{equation*}
S^2\cU^\vee \to V \otimes \Sigma^{2,1}\cU^\vee
\qquad\text{and}\qquad
\Lambda^7V \otimes \Sigma^{2,2}\cU^\vee(-3) \to \Sigma^{3,2}\cU^\vee(-3)
\end{equation*}
of this morphism of complexes are both nonzero.
\end{proposition}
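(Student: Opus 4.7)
First I would identify the spaces in which the rightmost and leftmost components of the sought chain map live, and construct natural nonzero elements in each. By Lemma~\ref{19},
$\mathrm{Hom}(S^2\cU^\vee, V \otimes \Sigma^{2,1}\cU^\vee) \cong V \otimes V$,
whose $\Sp(V)$-invariant part is one-dimensional (the $\Bbbk\omega$-summand inside $\Lambda^2V$); a natural generator $f_2$ is the adjoint of the composition
\[
V \otimes S^2\cU^\vee \twoheadrightarrow \cU^\vee \otimes S^2\cU^\vee \twoheadrightarrow \Sigma^{2,1}\cU^\vee,
\]
where the first arrow uses the symplectic surjection $V \twoheadrightarrow \cU^\vee$ and the second is the Pieri projection. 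A symmetric analysis produces a natural generator $g_{-3} \colon V \otimes \Sigma^{2,2}\cU^\vee(-3) \to \Sigma^{3,2}\cU^\vee(-3)$ of the again one-dimensional $\Sp(V)$-invariant part of $\mathrm{Hom}(V \otimes \Sigma^{2,2}\cU^\vee(-3), \Sigma^{3,2}\cU^\vee(-3))$, coming from the Pieri decomposition $\cU^\vee \otimes \Sigma^{2,2}\cU^\vee = \Sigma^{3,2}\cU^\vee \oplus \Sigma^{2,2,1}\cU^\vee$.

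Next I would extend $f_2$ to a chain map $(f_{-3},\ldots,f_2)$ by applying Lemma~\ref{lemma:extension}(1) inductively, one step to the left at each iteration $i \in \{1, 0, -1, -2, -3\}$. This requires verifying the semiorthogonality $\mathrm{Ext}^\bullet(F_i, E_j) = 0$ for all $j < i$ in the relevant range. Most of these follow from Corollary~\ref{corollary:e}, since many of the terms lie in the rectangular Lefschetz collection generated by $\fE$; the vanishings involving $E_{-3} = \Sigma^{3,2}\cU^\vee(-3)$ at $i \in \{-2,-1,0,1\}$ are covered by Lemma~\ref{lemma:32}; and the remaining ones are handled by direct Borel--Bott--Weil estimates via Corollary~\ref{11}. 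The uniqueness clause of Lemma~\ref{lemma:extension} shows the chain map is determined by $f_2$; together with the fact that the space of $f_2$'s factoring through $\ker(E_2 \to E_3) = \mathrm{im}(E_1 \to E_2)$ is one-dimensional (as one sees from the short exact sequence $0 \to \mathrm{im}(E_1 \to E_2) \to E_2 \to E_3 \to 0$ combined with Lemma~\ref{all31}), this proves uniqueness of the chain map up to scalar.

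Finally, to show both named components are nonzero, I would run the symmetric construction via Lemma~\ref{lemma:extension}(2) to extend $g_{-3}$ rightward. The relevant Ext-vanishings $\mathrm{Ext}^\bullet(F_j, E_i) = 0$ for $j > i$ coincide with those used above and yield a chain map nonzero at position $-3$ by construction. By the uniqueness just established, this rightward extension agrees up to a nonzero scalar with the leftward extension of $f_2$; consequently both components at positions $2$ and $-3$ of the unique $\Sp(V)$-equivariant chain map are nonzero. The main obstacle is the systematic verification of the Ext-vanishings: each is individually a routine Borel--Bott--Weil calculation, but one must be mindful that the nonvanishing $\mathrm{Ext}^4(S^2\cU^\vee, \Sigma^{3,2}\cU^\vee(-3)) = \Bbbk$ of Lemma~\ref{lemma:32} means the hypothesis of Lemma~\ref{lemma:extension} fails at the initial extension step from either endpoint, which is precisely what creates the one-dimensional ambiguity in the starting element and hence the one-dimensional space of chain maps.
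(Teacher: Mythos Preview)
Your approach is essentially the same as the paper's: start from a nonzero $\Sp(V)$-equivariant map $f_2\colon S^2\cU^\vee \to V\otimes\Sigma^{2,1}\cU^\vee$, then propagate leftward via Lemma~\ref{lemma:extension}(1) using the semiorthogonalities of Lemma~\ref{19} and Lemma~\ref{lemma:32}. The required Ext-vanishings you list are exactly those the paper uses, and your uniqueness argument (the space of admissible $f_2$'s is one-dimensional because $\mathrm{Hom}(S^2\cU^\vee,\Sigma^{3,1}\cU^\vee)^{\Sp(V)}=((V\otimes V)/\omega)^{\Sp(V)}=0$) is equivalent to the paper's.

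Two points deserve comment. First, for the nonvanishing of $f_{-3}$ the paper argues by contraposition: assuming $f_{-3}=0$, Lemma~\ref{lemma:extension}(2) (the ``in particular'' clause) propagates zeroes rightward to give $f_2=0$, a contradiction. Your route---build a second chain map from $g_{-3}$ and invoke uniqueness---also works, but you omit the check that the starting square commutes, i.e.\ that the composite $\Sigma^{3,3}\cU^\vee(-4)\to V\otimes\Sigma^{2,2}\cU^\vee(-3)\xrightarrow{g_{-3}}\Sigma^{3,2}\cU^\vee(-3)$ vanishes. This does hold (the target Hom-space has no $\Sp(V)$-invariants), but it needs to be said; the paper's contrapositive avoids this extra verification entirely.

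Second, your closing remark that the nonvanishing $\mathrm{Ext}^4(S^2\cU^\vee,\Sigma^{3,2}\cU^\vee(-3))=\Bbbk$ ``means the hypothesis of Lemma~\ref{lemma:extension} fails at the initial extension step'' is misleading. Lemma~\ref{lemma:extension}(1) is never applied at position~$2$ (you \emph{start} there); it is applied at positions $1,0,-1,-2,-3$, and at each of those the hypotheses do hold. The $\mathrm{Ext}^4$ class is relevant only indirectly, via the hypercohomology spectral sequence computing $\mathrm{Hom}(S^2\cU^\vee,\ker(E_2\to E_3))$ from the left resolution of that kernel---it is the ultimate source of the one-dimensional space of chain maps, but not a failure of any hypothesis you actually invoke.
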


\begin{proof}
Consider the following $\operatorname{Sp}(V)$-equivariant composition of morphisms
\begin{equation}
\label{eq:composition}
S^2\cU^{\vee} \to
\cU\otimes \Sigma^{2,1}\cU^{\vee} \to
V\otimes \Sigma^{2,1}\cU^{\vee} \to
\cU^{\vee}\otimes \Sigma^{2,1}\cU^{\vee} \to
\Sigma^{3,1}\cU^\vee,
\end{equation}
where the first arrow is the embedding of a direct summand,
the last is the projection onto a direct summand,
and the middle part is obtained by tensoring the right side of~\eqref{eq:upu-monad} with $\Sigma^{2,1}\cU^\vee$.
Since the right side of~\eqref{eq:upu-monad} is a complex, the composition~\eqref{eq:composition} is zero,
while the composition of the first two arrows in it is injective.
On the other hand, using Lemma~\ref{19} we compute
\begin{equation*}
\operatorname{Hom}(S^2\cU^\vee, V \otimes \Sigma^{2,1}\cU^\vee) \cong
V \otimes \operatorname{Hom}(S^2\cU^\vee, \Sigma^{2,1}\cU^\vee) \cong
V \otimes V,
\end{equation*}
hence the morphism $S^2\cU^\vee \to V \otimes \Sigma^{2,1}\cU^\vee$ defined by the first two arrows is the unique
up to rescaling nonzero $\operatorname{Sp}(V)$-equivariant morphism.

Similarly, the composition $V \otimes \Sigma^{2,1}\cU^\vee \to \Sigma^{3,1}\cU^\vee$
of the last two arrows in~\eqref{eq:composition} is surjective, and it is the unique nonzero $\operatorname{Sp}(V)$-equivariant morphism,
hence coincides with the last morphism in~\eqref{34}.
Thus, we obtain a commutative square
\begin{equation*}
\xymatrix{
V \otimes \Sigma^{2,1}\cU^\vee \ar[r] &
\Sigma^{3,1}\cU^\vee \\
S^2\cU^\vee \ar[r] \ar[u] &
0. \ar[u]
}
\end{equation*}
Applying iteratively Lemma~\ref{lemma:extension}(1)
and using semiorthogonalities of Lemma~\ref{19} and Lemma~\ref{lemma:32},
we extend this square to a morphism of complexes from~\eqref{29} to~\eqref{34}.
The extension is unique by Lemma~\ref{lemma:extension}(1), hence is $\operatorname{Sp}(V)$-equivariant.

Assume the component $\Lambda^7V \otimes \Sigma^{2,2}\cU^\vee(-3) \to \Sigma^{3,2}\cU^\vee(-3)$ of the constructed morphism of complexes is zero.
Then applying Lemma~\ref{lemma:extension}(2) several times
and using again semiorthogonalities of Lemma~\ref{19},
we conclude that all other components are zero as well,
hence so is the component $S^2\cU^\vee \to V \otimes \Sigma^{2,1}\cU^\vee$, which contradicts the construction of the morphism.

Finally, assume there is another equivariant morphism of complexes from~\eqref{29} to~\eqref{34}.
If its component~$S^2\cU^\vee \to V \otimes \Sigma^{2,1}\cU^\vee$ is nonzero,
then the argument above shows that after rescaling it is given by the composition of the first two arrows in~\eqref{eq:composition},
hence by Lemma~\ref{lemma:extension}(1) the morphism coincides with the one that we constructed above.
If, however, the component is zero, the corresponding morphism of complexes is zero as well, again by Lemma~\ref{lemma:extension}(1).
\end{proof}

From now on we fix a morphism of complexes from~\eqref{29} to~\eqref{34} constructed by Proposition~\ref{bicomplex},
and consider it as a bicomplex with two rows.
We will use some of its truncations: the first is
(for convenience in what follows we identify $\Lambda^{k}V$ with~$\Lambda^{8-k}V$ via $\omega$)
\begin{equation}
\label{eq:first-right}
\vcenter{\xymatrix{
*+[F]{\Lambda^4V\otimes\mathcal{O}} \ar[r] &
\Lambda^2V\otimes \Lambda^2\cU^{\vee} \ar[r] &
V\otimes \Sigma^{2,1}\cU^{\vee} \ar[r] &
\Sigma^{3,1}\cU^{\vee} \ar[r] &
0
\\
\Lambda^2V\otimes\mathcal{O} \ar[u] \ar[r] &
V\otimes \cU^{\vee} \ar[u] \ar[r] &
S^2\cU^{\vee} \ar[u] \ar[r] &
0,
}}
\end{equation}
which is quasiisomorphic to
\begin{equation}
\label{eq:first-left}
\vcenter{\xymatrix{
&
0 \ar[r] &
\Sigma^{3,2}\cU^{\vee}(-3) \ar[r] &
V\otimes \Sigma^{2,1}\cU^{\vee}(-2) \ar[r] &
*+[F]{\Lambda^2 V\otimes\cU^{\vee}(-1)} \\
0 \ar[r] &
\Sigma^{3,3}\cU^{\vee}(-4) \ar[r] \ar[u] &
V\otimes\Sigma^{2,2}\cU^{\vee}(-3) \ar[u] \ar[r] &
\Lambda^2 V\otimes \Lambda^2\cU^{\vee}(-2) \ar[u] \ar[r] &
\Lambda^3 V\otimes \cO(-1), \ar[u]
}}
\end{equation}
where the boxed terms are considered to be sitting in degree zero.
The second truncation is
\begin{equation}
\label{eq:second-right}
\vcenter{\xymatrix{
*+[F]{\Lambda^2 V\otimes\cU^{\vee}(-1)} \ar[r] &
\Lambda^4V\otimes\mathcal{O} \ar[r] &
\Lambda^2V\otimes \Lambda^2\cU^{\vee} \ar[r] &
V\otimes \Sigma^{2,1}\cU^{\vee} \ar[r] &
\Sigma^{3,1}\cU^{\vee} \ar[r] &
0
\\
\Lambda^3 V\otimes \cO(-1) \ar[u] \ar[r] &
\Lambda^2V\otimes\mathcal{O} \ar[u] \ar[r] &
V\otimes \cU^{\vee}\ar[u] \ar[r] &
S^2\cU^{\vee} \ar[u] \ar[r] &
0 \ar[u]
}}
\end{equation}
which is quasiisomorphic to
\begin{equation}
\label{eq:second-left}
\vcenter{\xymatrix{
&
0 \ar[r] &
\Sigma^{3,2}\cU^{\vee}(-3) \ar[r] &
*+[F]{V \otimes \Sigma^{2,1}\cU^{\vee}(-2)}
\\
0 \ar[r] &
\Sigma^{3,3}\cU^{\vee}(-4) \ar[r] \ar[u]&
V\otimes\Sigma^{2,2}\cU^{\vee}(-3) \ar[u] \ar[r] &
\Lambda^2 V\otimes \Lambda^2\cU^{\vee}(-2) \ar[u]
}}
\end{equation}
with the same convention about the grading.

\subsection{Some properties of the bicomplex}

Here we describe some of the vertical maps in the bicomplex.

\begin{lemma}
\label{lemma:lambda-injective}
The morphism $\Lambda^2V \otimes \cO \to \Lambda^4V \otimes \cO$
in~\eqref{eq:first-right} is injective.
\end{lemma}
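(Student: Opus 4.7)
Both bundles $\Lambda^2V\otimes\cO$ and $\Lambda^4V\otimes\cO$ are trivial, so the morphism is constant, given by an $\operatorname{Sp}(V)$-equivariant linear map $f\colon\Lambda^2V\to\Lambda^4V$ (the equivariance is part of Proposition~\ref{bicomplex}). The first step of the plan is to use the $\operatorname{Sp}$-isotypic decompositions
\[
\Lambda^2 V = \Sigma^{1,1}_{\operatorname{Sp}}V \oplus \Bbbk\cdot\omega, \qquad \Lambda^4 V = \Sigma^{1,1,1,1}_{\operatorname{Sp}}V \,\oplus\, \omega \wedge \Sigma^{1,1}_{\operatorname{Sp}}V \,\oplus\, \Bbbk\cdot\omega^2.
\]
By Schur's lemma, $\dim_{\Bbbk}\operatorname{Hom}_{\operatorname{Sp}}(\Lambda^2V,\Lambda^4V)=2$, with a distinguished basis consisting of the Lefschetz operator $L\colon \xi\mapsto \omega\wedge\xi$ (whose restriction to each irreducible summand of $\Lambda^2V$ is an isomorphism onto the corresponding summand of $\Lambda^4V$) and a rank-one map $\tau$ factoring through $\Bbbk\cdot\omega^2$. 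Writing $f = aL + b\tau$, injectivity of $f$ is equivalent to the conditions $a\ne 0$ and $a+cb\ne 0$ for an explicit nonzero constant $c$ (coming from the $\omega$-contraction of $\omega$).

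Next I would identify $f$ up to a nonzero scalar with $L$, which is injective by hard Lefschetz for the symplectic vector space $V$ of dimension $2n=8$ at exterior degree $k=2<n$. To this end I would construct an Sp-equivariant morphism of complexes from~\eqref{29} to~\eqref{34} whose middle vertical is $L\otimes \mathrm{id}_\cO$. The construction proceeds by prescribing $L$ in the middle and extending it in both directions using Lemma~\ref{lemma:extension}(1) and~(2); the required semiorthogonality hypotheses are supplied by Lemma~\ref{19} and Lemma~\ref{lemma:32}. The crucial check is that the rightmost component of this Lefschetz extension is a \emph{nonzero} map $S^2\cU^\vee \to V\otimes \Sigma^{2,1}\cU^\vee$. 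Granted this, the uniqueness asserted in Proposition~\ref{bicomplex} forces the bicomplex morphism to agree with the Lefschetz extension up to a nonzero scalar, identifying $f$ as a nonzero multiple of $L$, hence injective.

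The main obstacle is the final nonvanishing check on the rightmost component of the Lefschetz extension, since if it happened to vanish, the argument by uniqueness would collapse. Should this happen, my contingency plan is to attack the two scalar components directly: prove $a\ne 0$ by restricting the right commutative square of the bicomplex to the primitive summand $\Sigma^{1,1}_{\operatorname{Sp}}V\otimes\cO$ and using the explicit description~\eqref{eq:composition} of the right-end map, so that the composition $\Sigma^{1,1}_{\operatorname{Sp}}V\otimes\cO \to V\otimes\cU^\vee \to \Lambda^2V\otimes\Lambda^2\cU^\vee$ is nonzero precisely because the bicomplex is not restricted from $\operatorname{Gr}(3,V)$ and hence the symplectic form enters nontrivially; then independently show $a+cb\ne 0$ by tracing the image of $\omega\in\Lambda^2V$ along the same square.
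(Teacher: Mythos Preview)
Your representation-theoretic setup is correct, but the proposal is a plan rather than a proof. The ``crucial check'' that the rightmost component of your Lefschetz extension is nonzero is the entire content of the argument, and you explicitly leave it undone (``Granted this\ldots''); the contingency plan is likewise only a sketch (``nonzero precisely because the bicomplex is not restricted from $\Gr(3,V)$'' is a heuristic, not an argument). To actually verify that nonvanishing you would have to trace concretely how Lemma~\ref{lemma:extension}(2) produces the map $V \otimes \cU^\vee \to \Lambda^2V \otimes \Lambda^2\cU^\vee$ from $L$, and then the map $S^2\cU^\vee \to V \otimes \Sigma^{2,1}\cU^\vee$ from that --- and this tracing is essentially the work you are trying to avoid.

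The paper's approach is more direct and does not try to identify the map with $L$. Instead it writes down explicitly the \emph{next} vertical arrow $V \otimes \cU^\vee \to \Lambda^2V \otimes \Lambda^2\cU^\vee$ as the composition~\eqref{eq:composition-2} and checks, using acyclicity of the relevant kernel and complementary summand (supplied by Lemma~\ref{19}), that this arrow induces an injection on global sections. Since the bottom horizontal map $\Lambda^2V \otimes \cO \to V \otimes \cU^\vee$ in~\eqref{29} also induces an injection on $H^0$ (the terms to its left are acyclic by Lemmas~\ref{19} and~\ref{T1ort}), the commutative square gives that the composition $\Lambda^2V \to \Lambda^4V \to H^0(X,\Lambda^2V \otimes \Lambda^2\cU^\vee)$ is injective, hence so is its first factor. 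This bypasses both the two-dimensional ambiguity in $\operatorname{Hom}_{\operatorname{Sp}}(\Lambda^2V,\Lambda^4V)$ and the nonvanishing check you flag as the main obstacle.
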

\begin{proof}
Consider the composition
\begin{equation}
\label{eq:composition-2}
V \otimes \cU^\vee \to
V \otimes \cU \otimes \Lambda^2\cU^\vee \to
\Lambda^2V \otimes \Lambda^2\cU^\vee,
\end{equation}
where the first arrow is induced by the embedding of a direct summand $\cU^\vee \to \cU \otimes \Lambda^2\cU^\vee$,
and the second arrow is induced by the wedge product map $V \otimes \cU \to \Lambda^2V \otimes \cO$.
It is easy to check that the square formed by this arrow and the arrow $S^2\cU^\vee \to V \otimes \Sigma^{2,1}\cU^\vee$
constructed in the proof of Proposition~\ref{bicomplex} commutes,
hence by the construction of the morphism of complexes in Proposition~\ref{bicomplex},
this composition is its component.
Note that the morphism induced by~\eqref{eq:composition-2} on global sections is injective.
Indeed,
\begin{equation*}
\cU \otimes \Lambda^2\cU^\vee \cong \cU^\vee \oplus S^2\cU(1)
\end{equation*}
and the second summand is acyclic by Lemma~\ref{19}, hence the first arrow in~\eqref{eq:composition-2} induces an isomorphism on global sections.
On the other hand, the kernel of the second arrow in~\eqref{eq:composition-2} is $S^2\cU \otimes \Lambda^2\cU^\vee$,
which is also acyclic by Lemma~\ref{19}, hence the morphism induced by this arrow on global sections is injective.

Since the map $\Lambda^2V \otimes \cO \to V \otimes \cU^\vee$ in the complex~\eqref{29} induces an injection on global sections
(because all the terms to the left of $\Lambda^2V \otimes \cO$ are acyclic by Lemma~\ref{19} and Lemma~\ref{T1ort}),
it follows that the composition
\begin{equation*}
\Lambda^2V \to \Lambda^4V \to \mathrm{H}^0(X, \Lambda^2V \otimes \Lambda^2\cU^\vee)
\end{equation*}
of the map on global sections induced by the left vertical arrow and the upper left horizontal arrow in~\eqref{eq:first-right} is injective too,
hence the first of these maps is injective, and therefore the left vertical arrow in~\eqref{eq:first-right} is injective.
%
%
%
%
%
\end{proof}

\begin{lemma}
\label{lemma:s22-s32}
The vertical arrows
\begin{equation*}
V \otimes \Sigma^{2,2}\cU^\vee(-3) \to \Sigma^{3,2}\cU^\vee(-3)
\qquad\text{and}\qquad
\Lambda^2V \otimes \Lambda^2\cU^\vee(-2) \to V \otimes \Sigma^{2,1}\cU^\vee(-2)
\end{equation*}
in~\eqref{eq:second-left} coincide up to a twist with the morphisms in~\eqref{staircase} and~\eqref{34} respectively.
In particular, the first is surjective, and the cokernel of the second is isomorphic to $\Sigma^{3,1}\cU^\vee(-2)$.
\end{lemma}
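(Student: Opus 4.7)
The plan is to identify each vertical arrow as a specific $\Sp(V)$-equivariant morphism by combining Hom-space computations with the uniqueness structure of the bicomplex provided by Proposition~\ref{bicomplex}.

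For the first arrow $V \otimes \Sigma^{2,2}\cU^\vee(-3) \to \Sigma^{3,2}\cU^\vee(-3)$, I apply the Littlewood--Richardson rule to $\Sigma^{2,2}\cU \otimes \Sigma^{3,2}\cU^\vee$ and then Corollary~\ref{11}: of the six summands in the decomposition, only $\cU^\vee$ has non-vanishing cohomology, contributing $\mathrm{H}^0 \cong V$ as an $\Sp(V)$-representation. Hence $\mathrm{Hom}(V \otimes \Sigma^{2,2}\cU^\vee, \Sigma^{3,2}\cU^\vee) \cong V \otimes V$, whose $\Sp(V)$-invariant subspace is one-dimensional (via $\omega$). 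Since the bicomplex arrow is nonzero by Proposition~\ref{bicomplex} (after identifying $\Lambda^7 V \cong V$) and the staircase arrow in~\eqref{staircase} twisted by $\cO(-3)$ is the last surjection of an exact complex, hence also nonzero, the two coincide up to a nonzero scalar; surjectivity follows immediately.

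For the second arrow $\Lambda^2V \otimes \Lambda^2\cU^\vee(-2) \to V \otimes \Sigma^{2,1}\cU^\vee(-2)$, a parallel computation gives a two-dimensional space of $\Sp(V)$-equivariant morphisms (the two trivial summands of $\Lambda^2V \otimes V \otimes V$), so uniqueness does not follow from the Hom-space alone. Instead I invoke Lemma~\ref{lemma:extension}(2) at degree $-2$: a routine check using Lemma~\ref{19} shows that the semiorthogonalities $\mathrm{Ext}^\bullet(F_j, V \otimes \Sigma^{2,1}\cU^\vee(-2)) = 0$ hold for each term $F_j$ of the bottom row of~\eqref{eq:second-left} with $j > -2$. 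Therefore, given the already-identified first arrow, the second arrow is uniquely determined by the commutativity of the relevant square. It remains to verify that the staircase map from~\eqref{34} twisted by $\cO(-2)$, paired with the staircase first arrow, satisfies the same commutativity; this reduces to checking an identity between two specific $\Sp(V)$-equivariant compositions $V \otimes \Sigma^{2,2}\cU^\vee(-3) \to V \otimes \Sigma^{2,1}\cU^\vee(-2)$ factoring through the staircase differentials of~\eqref{29} and~\eqref{34}.

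The cokernel statements then follow from the exactness of the staircase complexes: the first arrow, as the last surjection in~\eqref{staircase}(-3), is surjective, and the cokernel of the second arrow equals $\Sigma^{3,1}\cU^\vee(-2)$ by the exactness of~\eqref{34}(-2). The main obstacle I expect is the verification of the commutativity identity for the staircase maps in the second step: because the relevant Hom-space of candidate compositions is itself of dimension greater than one, an explicit equivariant computation (or an appeal to the natural origin of these complexes as restrictions from $\Gr(3,V)$) is needed to establish the identity, and this is the nontrivial piece that cannot be reduced to pure Hom-space uniqueness.
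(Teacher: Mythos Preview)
Your treatment of the first arrow matches the paper's: the $\Sp(V)$-invariant subspace of $V\otimes V$ is one-dimensional, so uniqueness forces the bicomplex arrow to agree with the surjection in~\eqref{staircase}.

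For the second arrow your strategy is sound but there are two issues. First, a bookkeeping slip: to apply Lemma~\ref{lemma:extension}(2) at this position you need $\mathrm{Ext}^\bullet(F_j,V\otimes\Sigma^{2,1}\cU^\vee(-2))=0$ for the terms $F_j$ of the \emph{full} bottom row lying to the \emph{right} of $\Lambda^2V\otimes\Lambda^2\cU^\vee(-2)$ (namely $\Lambda^3V\otimes\cO(-1)$, $\Lambda^2V\otimes\cO$, $V\otimes\cU^\vee$, $S^2\cU^\vee$), not the terms of the truncation~\eqref{eq:second-left} with $j>-2$. These vanishings do hold by Lemma~\ref{19}, so the lemma applies and the second arrow is indeed determined by the first.

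Second, and more substantively, the commutativity check you flag as the main obstacle is exactly where the paper's argument diverges from yours and becomes cleaner. Rather than trying to verify directly that the staircase differential from~\eqref{34} makes the square commute, the paper writes down an \emph{explicit} composition
\[
\Lambda^2V\otimes\Lambda^2\cU^\vee(-2)\;\to\;V\otimes\cU^\vee\otimes\Lambda^2\cU^\vee(-2)\;\to\;V\otimes\Sigma^{2,1}\cU^\vee(-2)
\]
built from the tautological map $\Lambda^2V\to V\otimes\cU^\vee$ and the summand projection $\cU^\vee\otimes\Lambda^2\cU^\vee\to\Sigma^{2,1}\cU^\vee$. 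Because both this chain and the analogous explicit chain for the first arrow are assembled from natural maps, the commutativity of the square linking them is a routine diagram chase. This identifies the explicit composition with the bicomplex arrow. The key observation you are missing is that this explicit composition is $\GL(V)$-equivariant (it makes sense already on $\Gr(3,V)$), and the space of $\GL(V)$-equivariant maps $\Lambda^2V\otimes\Lambda^2\cU^\vee\to V\otimes\Sigma^{2,1}\cU^\vee$ is one-dimensional, whereas the $\Sp(V)$-equivariant space you computed is two-dimensional. Since the staircase differential in~\eqref{34} is restricted from $\Gr(3,V)$ and hence also $\GL(V)$-equivariant, $\GL(V)$-uniqueness forces the identification and bypasses the obstacle entirely.
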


\begin{proof}
We have
\begin{equation*}
\operatorname{Hom}(V \otimes \Sigma^{2,2}\cU^\vee(-3), \Sigma^{3,2}\cU^\vee(-3)) \cong
V \otimes \operatorname{Hom}(\Sigma^{2,2}\cU^\vee, \Sigma^{3,2}\cU^\vee) \cong
V \otimes V,
\end{equation*}
so there is a unique nonzero $\operatorname{Sp}(V)$-equivariant map $V \otimes \Sigma^{2,2}\cU^\vee(-3) \to \Sigma^{3,2}\cU^\vee(-3)$,
hence the one in~\eqref{eq:second-left} coincides with the twist of the one in~\eqref{staircase}, hence it is surjective.

Note that this map can be written quite explicitly as the following composition
\begin{equation*}
V \otimes \Sigma^{2,2}\cU^\vee(-3) \to \cU^\vee \otimes \Sigma^{2,2}\cU^\vee(-3) \to \Sigma^{3,2}\cU^\vee(-3),
\end{equation*}
where the first map is induced by the projection $V \otimes \cO \to \cU^\vee$ and the second map is the projection
onto a direct summand, is yet another nonzero $\operatorname{Sp}(V)$-equivariant map, hence coincides with the one above,
in particular, this map is surjective.

Similarly, one can define a composition
\begin{equation*}
\Lambda^2V \otimes \Lambda^2\cU^\vee(-2) \to V \otimes \cU^\vee \otimes \Lambda^2\cU^\vee(-2) \to V \otimes \Sigma^{2,1}\cU^\vee(-2),
\end{equation*}
where the first arrow is induced by the natural map $\Lambda^2V \otimes \cO \to V \otimes \cU^\vee$
and the second arrow is induced by the projection onto a direct summand $\cU^\vee \otimes \Lambda^2\cU^\vee(-2) \to \Sigma^{2,1}\cU^\vee(-2)$.
Furthermore, there is a natural map from the first chain of morphisms to the second,
and it is esy to check that the corresponding diagram commutes.
Therefore,
by construction of the morphism of complexes in Proposition~\ref{bicomplex},
this composition coincides with the map in~\eqref{eq:second-left}.

It remains to note that this composition is the unique $\mathrm{GL}(V)$-equivariant morphism, hence
coincides (up to a twist) with the map in~\eqref{34}.
In particular, its cokernel is isomorphic to $\Sigma^{3,1}\cU^\vee(-2)$.
\end{proof}

\section{The non-rectangular part}
\label{section:non-rectangular}

For an equivariant vector bundle $E$ on $\IGr(3,V)$ denote by $\mathbf{ss}(E)$ the associated semisimple vector bundle,
that is the vector bundle whose associated representation of the parabolic subgroup in $\operatorname{Sp}(V)$
is the direct sum of all semisimple factors of the representation corresponding to $E$.

\subsection{Extra objects}

Denote the object represented by either of the bicomplexes~\eqref{eq:first-right} or~\eqref{eq:first-left} by $T$,
and the object represented by~\eqref{eq:second-right} and~\eqref{eq:second-left} by $T'$.
Below we will show that $T$ is exceptional.
To start with, we describe the cohomology sheaves of $T$ and $T'$, and a triangle relating them.

\begin{lemma} \label{T_1}
The object $T$ is a vector bundle, whose associated semisimple bundle is
\begin{equation*}
\mathbf{ss}(T) \quad\cong\quad
\cO(-1) \otimes \cS \quad\oplus\quad \cU^\vee(-1) \quad\oplus\quad S^2\cU^\vee(-1) \otimes \cS \quad\oplus\quad
\Sigma^{2,1}\cU^\vee(-1).
\end{equation*}
\end{lemma}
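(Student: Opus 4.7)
The plan is to compute the cohomology sheaves of $T$ via the rows-first spectral sequence of the bicomplex~\eqref{eq:first-left}, and then identify the semisimple structure of the result by computing classes in the Grothendieck group. Since the full bicomplex's rows are the exact staircase complexes~\eqref{29} and~\eqref{34}, and the truncations retain only their left halves, the horizontal cohomology of each truncated row is concentrated in the rightmost column, where it equals
\begin{equation*}
E_1^{-1,0} \cong I_B := \ker(\Lambda^2V \otimes \cO \to V \otimes \cU^\vee) \cong \Lambda^2\cU^\perp
\end{equation*}
for the bottom row (the last identification coming from the Koszul complex~\eqref{15} for $k=2$), and
\begin{equation*}
E_1^{-1,1} \cong I_A := \ker(\Lambda^4V \otimes \cO \to \Lambda^2V \otimes \Lambda^2\cU^\vee)
\end{equation*}
for the top row.

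Under the natural inclusions $I_B \hookrightarrow \Lambda^2V \otimes \cO$ and $I_A \hookrightarrow \Lambda^4V \otimes \cO$, the $d_1$-differential between these two survivors is the restriction of the bicomplex's vertical arrow $\Lambda^2V \otimes \cO \to \Lambda^4V \otimes \cO$, which is injective by Lemma~\ref{lemma:lambda-injective}. Hence the $E_2 = E_\infty$ page reduces to a single entry $I_A / I_B$ at total degree zero, so $T$ is concentrated in degree zero and fits into a short exact sequence
\begin{equation*}
0 \to \Lambda^2\cU^\perp \to I_A \to T \to 0
\end{equation*}
of equivariant coherent sheaves on the homogeneous space $X$, all of constant rank; thus $T$ is a vector bundle, establishing the first claim.

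To compute $\mathbf{ss}(T)$, I use $[\mathbf{ss}(T)] = [\mathbf{ss}(I_A)] - [\mathbf{ss}(\Lambda^2\cU^\perp)]$ in the Grothendieck group. The filtration $0 \subset \cU \subset \cU^\perp$ yields $[\mathbf{ss}(\Lambda^2\cU^\perp)] = [\cU^\vee(-1)] + [\cU \otimes \cS] + [\cO]$, while the exact staircase~\eqref{34} gives $[I_A] = [\Lambda^2V \otimes \cU^\vee(-1)] - [V \otimes \Sigma^{2,1}\cU^\vee(-2)] + [\Sigma^{3,2}\cU^\vee(-3)]$. Each term then decomposes into Levi-irreducible summands (of the form $\Sigma^\beta\cU^\vee \otimes S^c\cS$) via the filtration $0 \subset \cU \subset \cU^\perp \subset V \otimes \cO$ with graded pieces $\cU, \cS, \cU^\vee$, which induces decompositions of $\Lambda^k V$ and its tensor products. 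The main obstacle is the lengthy bookkeeping of these Levi-decompositions and their cancellations in the alternating sum; the rank check $\mathrm{rk}(T) = 84 - 64 + 15 - 10 = 25 = 2 + 3 + 12 + 8$ and the fact that irreducible equivariant summands are uniquely determined by their highest weight make this a routine (if tedious) verification, leaving exactly the four summands claimed in the lemma.
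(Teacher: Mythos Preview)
Your argument is correct and follows essentially the same skeleton as the paper's proof: compute the row cohomology of the bicomplex (both rows have cohomology only in one column, namely $\Lambda^2\cU^\perp$ for the bottom and $I_A=\ker(\Lambda^4V\otimes\cO\to\Lambda^2V\otimes\Lambda^2\cU^\vee)$ for the top), invoke Lemma~\ref{lemma:lambda-injective} to see that the induced $d_1$ is injective, and conclude that $T\cong I_A/\Lambda^2\cU^\perp$ is a single equivariant sheaf, hence a vector bundle.

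The genuine point of difference is how $\mathbf{ss}(I_A)$ is identified. The paper introduces an auxiliary bicomplex built from Koszul complexes~\eqref{15} for $k=3,4$ and shows directly that the top row of~\eqref{eq:first-right} is filtered by $\Lambda^4\cU^\perp$ and $\Lambda^3\cU^\perp\otimes\cU^\vee$; from there the Levi-decomposition of $I_A$ is immediate, and subtracting $\mathbf{ss}(\Lambda^2\cU^\perp)$ leaves the four claimed summands after a short cancellation. Your route instead reads off $[I_A]$ as the alternating sum of the three left-hand terms of~\eqref{34} and then decomposes each via the filtration $\cU\subset\cU^\perp\subset V\otimes\cO$. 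This works and gives the same answer, but the bookkeeping is heavier: you must decompose $\Lambda^2V\otimes\cU^\vee(-1)$ and $V\otimes\Sigma^{2,1}\cU^\vee(-2)$ into many Levi-irreducibles and then track a large cancellation, whereas the paper's auxiliary bicomplex packages $I_A$ into two pieces whose semisimplifications are computed in one line each. So both approaches are valid; the paper's is more conceptual and shorter, yours is more elementary but more laborious. One small presentational wrinkle: you announce you are using~\eqref{eq:first-left} but then describe $I_A,I_B$ as kernels of maps appearing in~\eqref{eq:first-right} and invoke the vertical arrow $\Lambda^2V\otimes\cO\to\Lambda^4V\otimes\cO$ there; this is harmless since the two truncations are quasi-isomorphic, but it would be cleaner to work consistently with~\eqref{eq:first-right} throughout.
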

\begin{proof}
First of all note that the bottom row of the bicomplex~\eqref{eq:first-right}
coincides with the Koszul complex~\eqref{15} for $k = 2$, hence is quasi-isomorphic to $\Lambda^2\cU^{\perp}$.
Its associated semisimple bundle is
\begin{equation*}
\mathbf{ss}(\Lambda^2\cU^\perp) \cong \Lambda^2\cU \quad\oplus\quad \cU \otimes \cS \quad\oplus\quad \cO.
\end{equation*}

Now consider the bicomplex
\begin{equation*}
\xymatrix{
&
\Lambda^3V\otimes\mathcal{U}^{\vee} \ar[d] \ar[r]& \Lambda^2 V\otimes
\cU^{\vee} \otimes \cU^{\vee}\ar[d] \ar[r]&  V \otimes S^2
\cU^{\vee}\otimes \cU^{\vee}\ar[d] \ar[r]& S^3\cU^{\vee}\otimes
\cU^{\vee}\ar[d] \ar[r]& 0 \\
\Lambda^4V\otimes\mathcal{O} \ar[r] &
\Lambda^3V\otimes \cU^{\vee} \ar[r] &
\Lambda^2V\otimes S^2\cU^{\vee}
\ar[r]& V\otimes S^3\cU^{\vee}  \ar[r] &
S^4\cU^{\vee} \ar[r] &
0,
}
\end{equation*}
where the top line is the Koszul complex~\eqref{15} for $k = 3$ tensored with $\cU^\vee$,
the bottom line is the Koszul complex~\eqref{15} for $k = 4$,
and all the vertical arrows are induced by the multiplication maps~$S^p\cU^\vee \otimes \cU^\vee \to S^{p+1}\cU^\vee$.
From the spectral sequence starting with vertical differentials it is easy to see that this bicomplex
is quasiisomorphic to the top row in~\eqref{eq:first-right}.
Using the spectral sequence starting with horizontal differentials,
we conclude that the cohomology of the top row in~\eqref{eq:first-right} has a filtration
with factors $\Lambda^4\cU^\perp$ and $\Lambda^3\cU^\perp \otimes \cU^\vee$ respectively.
Therefore, the associated semisimple bundle of this cohomology is isomorphic to
\begin{equation*}
\cO(-1) \otimes \cS \quad \oplus \quad
\Lambda^2\cU \quad \oplus \quad
\Lambda^2\cU \quad \oplus \quad
\Lambda^2\cU\otimes\cU^{\vee}\otimes \cS \quad\oplus\quad
\cU\otimes \cU^{\vee}.
\end{equation*}
Note also that
\begin{align*}
\Lambda^2\cU\otimes\cU^{\vee}\otimes \cS &\cong \cU \otimes \cS \quad\oplus\quad S^2\cU^\vee(-1) \otimes \cS
\qquad\text{and}\qquad\\
\cU\otimes \cU^{\vee} &\cong \cO \quad\oplus\quad \Sigma^{2,1}\cU^\vee(-1).
\end{align*}
By Lemma~\ref{lemma:lambda-injective} the morphism from the cohomology of the bottom row of~\eqref{eq:first-right}
to its top row is injective, hence $T$ is a vector bundle and its semisimple factors are given by the formula in the lemma.
\end{proof}

\begin{lemma}
\label{T_2}
The object $T'$ has two cohomology sheaves, with
\begin{equation*}
\mathcal{H}^0(T') \cong \Sigma^{3,1}\cU^\vee(-2)
\end{equation*}
and with the associated semisimple bundle of $\mathcal{H}^{-1}(T')$ isomorphic to
\begin{equation*}
\mathbf{ss}(\mathcal{H}^{-1}(T')) \quad\cong\quad
\cO(-2) \quad\oplus\quad \cU^\vee(-2) \otimes \cS \quad\oplus\quad \cU(-1) \quad\oplus\quad \cO(-1) \otimes \cS.
\end{equation*}
\end{lemma}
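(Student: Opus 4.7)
The plan is to compute $T'$ via the spectral sequence of the bicomplex~\eqref{eq:second-left} that starts with horizontal differentials. Since each row is a left-truncation of an exact staircase complex (\eqref{29} for the bottom, \eqref{34} for the top), the horizontal cohomology of each row is concentrated in its rightmost position. Writing
\begin{align*}
A' &:= \mathrm{coker}\bigl(\Sigma^{3,2}\cU^\vee(-3) \to V\otimes\Sigma^{2,1}\cU^\vee(-2)\bigr), \\
B' &:= \mathrm{coker}\bigl(V\otimes\Sigma^{2,2}\cU^\vee(-3) \to \Lambda^2V\otimes\Lambda^2\cU^\vee(-2)\bigr),
\end{align*}
the $E_1$-page therefore has only two nonzero entries, $B'$ at total degree $-1$ and $A'$ at total degree $0$, both in the rightmost column. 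The unique $d_1$-differential is a single map $\phi\colon B'\to A'$ induced by the vertical arrow $v\colon \Lambda^2V\otimes\Lambda^2\cU^\vee(-2)\to V\otimes\Sigma^{2,1}\cU^\vee(-2)$, which by Lemma~\ref{lemma:s22-s32} agrees up to a twist with the staircase map in~\eqref{34}. Since no room remains for higher differentials, the spectral sequence collapses at $E_2$, giving
\begin{equation*}
\mathcal{H}^{-1}(T')\cong\ker\phi,\qquad \mathcal{H}^0(T')\cong\mathrm{coker}\,\phi,
\end{equation*}
with vanishing in every other degree.

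To identify $\mathcal{H}^0(T')$ with $\Sigma^{3,1}\cU^\vee(-2)$, note that Lemma~\ref{lemma:s22-s32} gives $\mathrm{coker}(v)\cong\Sigma^{3,1}\cU^\vee(-2)$, providing a surjection $V\otimes\Sigma^{2,1}\cU^\vee(-2)\twoheadrightarrow\Sigma^{3,1}\cU^\vee(-2)$. The key point is that the composition
\begin{equation*}
\Sigma^{3,2}\cU^\vee(-3)\hookrightarrow V\otimes\Sigma^{2,1}\cU^\vee(-2)\twoheadrightarrow\Sigma^{3,1}\cU^\vee(-2)
\end{equation*}
vanishes. This follows from the vanishing $\mathrm{Hom}(\Sigma^{3,2}\cU^\vee(-3),\Sigma^{3,1}\cU^\vee(-2))^{\mathrm{Sp}(V)}=0$, which I would verify via Littlewood--Richardson combined with Borel--Bott--Weil: the summands contributing to $H^0$ of the corresponding bundle are only $S^2V$ and $\Lambda^2V/\langle\omega\rangle$, and neither contains a trivial $\mathrm{Sp}(V)$-component. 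Consequently the surjection $V\otimes\Sigma^{2,1}\cU^\vee(-2)\twoheadrightarrow\Sigma^{3,1}\cU^\vee(-2)$ descends to a surjection $A'\twoheadrightarrow\Sigma^{3,1}\cU^\vee(-2)$ whose kernel coincides with $\mathrm{im}(\phi)$, and a short diagram chase yields $\mathrm{coker}(\phi)\cong\Sigma^{3,1}\cU^\vee(-2)$.

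For the identification of $\mathbf{ss}(\mathcal{H}^{-1}(T'))$, the equivariance of $\phi$ makes its rank constant, so $\ker\phi$ is an equivariant vector bundle and its $\mathbf{ss}$-class is determined by its equivariant $K$-theory class. From the alternating sum of the terms of the total complex of~\eqref{eq:second-left}, combined with the value of $\mathcal{H}^0(T')$ obtained above, one gets
\begin{multline*}
[\ker\phi] = [\Sigma^{3,1}\cU^\vee(-2)] + [\Sigma^{3,3}\cU^\vee(-4)] - [V\otimes\Sigma^{2,2}\cU^\vee(-3)] \\
{} + [\Sigma^{3,2}\cU^\vee(-3)] + [\Lambda^2V\otimes\Lambda^2\cU^\vee(-2)] - [V\otimes\Sigma^{2,1}\cU^\vee(-2)].
\end{multline*}
Applying $\mathbf{ss}$ termwise using $\mathbf{ss}(V)=\cU\oplus\cS\oplus\cU^\vee$, $\Lambda^2\cS\cong\cO$, $\Lambda^2\cU\cong\cU^\vee(-1)$, and Littlewood--Richardson, one obtains the claim. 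The main obstacle is precisely this final combinatorial bookkeeping: each of the six terms decomposes into many Schur summands, and one must verify that after all cancellations exactly the four summands $\cO(-2)$, $\cU^\vee(-2)\otimes\cS$, $\cU(-1)$, and $\cO(-1)\otimes\cS$ remain.
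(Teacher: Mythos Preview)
Your argument is correct, but it runs the spectral sequence of~\eqref{eq:second-left} in the opposite direction from the paper: you take horizontal differentials first, whereas the paper takes vertical differentials first. In the paper's version the $E_1$-page has the cokernel $\Sigma^{3,1}\cU^\vee(-2)$ sitting alone in the top-right corner (since by Lemma~\ref{lemma:s22-s32} the middle vertical arrow is surjective and the right one has this cokernel), so $\mathcal{H}^0(T')$ is read off immediately with no further input; the bottom row of the $E_1$-page is then $\Sigma^{3,3}\cU^\vee(-4)\to K_1\to K_2$ with $K_i$ the vertical kernels, and $\mathbf{ss}(\mathcal{H}^{-1}(T'))$ is obtained by writing down $\mathbf{ss}(K_1)$ and $\mathbf{ss}(K_2)$ and cancelling common factors. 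Your route trades this for the extra step of showing that the composition $\Sigma^{3,2}\cU^\vee(-3)\hookrightarrow V\otimes\Sigma^{2,1}\cU^\vee(-2)\twoheadrightarrow\Sigma^{3,1}\cU^\vee(-2)$ vanishes. Your Borel--Bott--Weil verification of this is fine (and the two summands you name are exactly right), but note there is a one-line alternative: the source and target are irreducible equivariant bundles with distinct Levi highest weights, so there is no nonzero $\Sp(V)$-equivariant map between them. For $\mathcal{H}^{-1}(T')$ your $K$-theory alternating sum is equivalent to the paper's cancellation of $\mathbf{ss}(K_1)$ against $\mathbf{ss}(K_2)$ and $\Sigma^{3,3}\cU^\vee(-4)$; neither approach avoids the final bookkeeping.
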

\begin{proof}
Using exactness of the rows of \eqref{eq:second-left} we see that cohomology sheaves of $T'$ are concentrated in the degrees~$-1$ and~$0$.
To compute $\mathcal{H}^0(T')$ and $\mathcal{H}^{-1}(T')$, let us use the spectral sequence of~\eqref{eq:second-left}
that starts with vertical differentials.
By Lemma~\ref{lemma:s22-s32}
the first page of the spectral sequence looks like
\begin{equation}
\label{spectral}
\vcenter{\xymatrix@R=1ex{
0  &
0  &
\Sigma^{3,1}\cU^{\vee}(-2)
\\
\Sigma^{3,3}\cU^{\vee}(-4) \ar[r]&
K_1 \ar[r] &
K_2,
}}
\end{equation}
where
\begin{equation*}
K_1 = \mathrm{Ker}(V\otimes\Sigma^{2,2}\cU^{\vee}(-3)\to \Sigma^{3,2}\cU^{\vee}(-3))
\quad\text{and}\quad
K_2 = \mathrm{Ker}(\Lambda^2 V\otimes \Lambda^2\cU^{\vee}(-2)\to V\otimes \Sigma^{2,1}\cU^{\vee}(-2)).
\end{equation*}
Clearly, the upper right term $\Sigma^{3,1}\cU^{\vee}(-2)$ in~\eqref{spectral} survives in the spectral sequence and gives $\mathcal{H}^0(T')$.
Since the only other cohomology of $T'$ sits in degree $-1$, it follows that the bottom row in~\eqref{spectral} is left exact.
It is easy to see that the semisimple bundle associated with $K_1$ is
%
%
%
\begin{equation*}
\Sigma^{3,3}\cU^{\vee}(-4) \quad\oplus\quad
\Sigma^{2,1}\cU^{\vee}(-3) \quad\oplus\quad
\Sigma^{2,2}\cU^{\vee}(-3)\otimes \cS \quad\oplus\quad
\Lambda^2\cU^{\vee}(-2),
\end{equation*}
and the semisimple bundle associated with $K_2$ is
\begin{multline*}
\cO(-2)\quad\oplus\quad
\Sigma^{2,1}\cU^{\vee}(-3)\quad\oplus\quad
\cU^\vee(-2) \otimes \cS \quad\oplus\quad
\\
\quad\oplus\quad
\Sigma^{2,2}\cU^{\vee}(-3)\otimes \cS \quad\oplus\quad
\cU(-1) \quad\oplus\quad \cU(-1) \quad\oplus\quad
\cO(-1) \otimes \cS.
\end{multline*}
By left exactness all common factors are canceled, hence the semisimple bundle associated with $\mathcal{H}^{-1}(T')$
is given by the formula in the lemma.
%
%
\end{proof}

Note that by definition of the objects $T$ and $T'$ we have a distinguished triangle
\begin{equation}
\label{eq:tt}
T[-1] \to T' \to G
\end{equation}
(induced by the embedding of~\eqref{eq:first-right} into~\eqref{eq:second-right}),
where $G$ is the two-term complex
\begin{equation*}
\Lambda^3V \otimes \cO(-1) \to \Lambda^2V \otimes \cU^\vee(-1).
\end{equation*}

\subsection{Exceptionality}

Here we will prove that $T$ is an exceptional bundle.
We will need the following

\begin{lemma}
\label{lemma:t-s21}
The vector bundle $T$ is right orthogonal to $\Sigma^{2,1}\cU^{\vee}(-1).$
\end{lemma}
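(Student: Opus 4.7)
The plan is to exploit the distinguished triangle $T[-1]\to T'\to G$ from~\eqref{eq:tt}, where $G$ denotes the two-term complex $\{\Lambda^3V\otimes\cO(-1)\to\Lambda^2V\otimes\cU^\vee(-1)\}$. Applying $\mathrm{RHom}(\Sigma^{2,1}\cU^\vee(-1),-)$ to this triangle reduces the lemma to verifying the two vanishings $\mathrm{Ext}^\bullet(\Sigma^{2,1}\cU^\vee(-1),G)=0$ and $\mathrm{Ext}^\bullet(\Sigma^{2,1}\cU^\vee(-1),T')=0$.

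The first is immediate: since $G\in\langle\cO(-1),\cU^\vee(-1)\rangle$, it is enough to note that Lemma~\ref{19} applied with $k=0$, $\alpha=(2,1)$, and $\beta\in\{(0,0),(1,0)\}$ yields $\mathrm{Ext}^\bullet(\Sigma^{2,1}\cU^\vee,\cO)=\mathrm{Ext}^\bullet(\Sigma^{2,1}\cU^\vee,\cU^\vee)=0$, as $\alpha\not\leq\beta$ in both cases.

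The main step is the vanishing $\mathrm{Ext}^\bullet(\Sigma^{2,1}\cU^\vee(-1),T')=0$, for which I would split $T'$ via its truncation triangle $\mathcal{H}^{-1}(T')[1]\to T'\to\mathcal{H}^0(T')$. By Lemma~\ref{T_2} we have $\mathcal{H}^0(T')\cong\Sigma^{3,1}\cU^\vee(-2)$, and Lemma~\ref{18} gives $\mathrm{Ext}^\bullet(\Sigma^{2,1}\cU^\vee(1),\Sigma^{3,1}\cU^\vee)=0$ since our shift $k=1$ avoids the exceptional value $k=5$. For $\mathcal{H}^{-1}(T')$, I would use the equivariant filtration whose successive quotients agree with the semisimple summands $\cO(-2)$, $\cU^\vee(-2)\otimes\cS$, $\cU(-1)$, $\cO(-1)\otimes\cS$ listed in Lemma~\ref{T_2}, reducing the claim to checking that $\mathrm{Ext}^\bullet(\Sigma^{2,1}\cU^\vee(-1),F)=0$ for each of these four irreducible equivariant bundles $F$. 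For $\cO(-2)$ this is direct from Lemma~\ref{19}; for the other three, one computes $H^\bullet(X,\Sigma^{1,0,-1}\cU^\vee\otimes F)$, expanding via Littlewood--Richardson (Lemma~\ref{lemma:lrr}) and applying Theorem~\ref{12} to each summand. In every case the resulting weight $\alpha+\rho$ has either a zero entry or a repeated absolute value, so the cohomology vanishes.

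The main obstacle is the handling of the summands of $\mathcal{H}^{-1}(T')$ involving the symplectic bundle $\cS$; these are not covered by the standard Ext table of Lemma~\ref{19} and force the use of the full Borel--Bott--Weil theorem for the isotropic Grassmannian. However, the computations turn out to be routine, as the relevant weights have small entries and Theorem~\ref{12} (or Corollary~\ref{11}) applies cleanly to each Littlewood--Richardson component.
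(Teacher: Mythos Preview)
Your argument is correct but follows a genuinely different route from the paper's. The paper works directly with the bicomplex representation~\eqref{eq:first-left} of $T$: after applying $\mathrm{Ext}^\bullet(\Sigma^{2,1}\cU^\vee(-1),-)$ and killing most terms via Lemma~\ref{19}, only the three-term complex $\Sigma^{3,3}\cU^\vee(-4)\to V\otimes\Sigma^{2,2}\cU^\vee(-3)\to\Sigma^{3,2}\cU^\vee(-3)$ survives, which is then identified (up to direct summands already orthogonal to $\Sigma^{2,1}\cU^\vee(-1)$) with the monad~\eqref{eq:upu-monad} tensored by $\Sigma^{2,2}\cU^\vee(-3)$; the vanishing then reduces to the single dedicated computation of Lemma~\ref{21(-1)}. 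Your approach instead passes through the auxiliary object $T'$ via the triangle~\eqref{eq:tt} and the description of its cohomology sheaves in Lemma~\ref{T_2}, breaking the problem into four separate Borel--Bott--Weil checks on the semisimple factors of $\mathcal{H}^{-1}(T')$. Both methods ultimately confront $\cS$-twisted bundles and resolve them by Theorem~\ref{12}; the paper isolates this into one preparatory lemma (Lemma~\ref{21(-1)}), whereas you trade that for a slightly longer but entirely mechanical case analysis. A small point: your bundle $\cU(-1)\cong\Lambda^2\cU^\vee(-2)$ is in fact already covered by Lemma~\ref{19} (take $k=1$), so only the two $\cS$-factors genuinely require the direct BBW computation you describe.
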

\begin{proof}
Using the bicomplex \eqref{eq:first-left} and Lemma \ref{19} we see that $\mathrm{Ext}^{\bullet}(\Sigma^{2,1}\cU^{\vee}(-1),T)$
is computed by an application of $\mathrm{Ext}^{\bullet}(\Sigma^{2,1}\cU^{\vee}(-1),-)$ to the complex
\begin{equation*}
\Sigma^{3,3}\cU^{\vee}(-4) \to
V\otimes\Sigma^{2,2}\cU^{\vee}(-3) \to
\Sigma^{3,2}\cU^{\vee}(-3).
\end{equation*}
The proof of Proposition~\ref{bicomplex} shows that this complex coincides
with the tensor product of~\eqref{eq:upu-monad} with~$\Sigma^{2,2}\cU^{\vee}(-3)$
composed with the direct summand embedding $\Sigma^{3,3}\cU^{\vee}(-4) \hookrightarrow \cU \otimes \Sigma^{2,2}\cU^{\vee}(-3)$
and projection $\cU^\vee \otimes \Sigma^{2,2}\cU^{\vee}(-3) \twoheadrightarrow \Sigma^{3,2}\cU^{\vee}(-3)$.
The other direct summands $\Sigma^{2,1}\cU^{\vee}(-3)$ and $\Lambda^2\cU^\vee(-2)$ are right semiorthogonal to $\Sigma^{2,1}\cU^{\vee}(-1)$
by Lemma~\ref{19}, so to deduce that $\mathrm{Ext}^{\bullet}(\Sigma^{2,1}\cU^{\vee}(-1),T) = 0$ it is enough to check that
$\mathrm{Ext}^{\bullet}(\Sigma^{2,1}\cU^{\vee}(-1),\Sigma^{2,2}\cU^{\vee}(-3) \otimes \cS) = 0$.
But this is proved in Lemma~\ref{21(-1)}.
\end{proof}

Recall the collections $\mathfrak{E}$ and $\mathfrak{E}'$ defined in~\eqref{eq:e} and~\eqref{eq:e-prime} respectively.
Also recall that by Corollary~\ref{corollary:e} and~\ref{corollary:e-prime} they are starting blocks
of rectangular Lefschetz collections of length~6.
Below we will repeatedly use this fact.

\begin{lemma} \label{9}
The vector bundle $T$ satisfies the following properties:
\begin{enumerate}
\item $T$ is isomorphic to a shift of the left mutation of $\Sigma^{3,1}\cU^{\vee}$ through $\mathfrak{E}$;
more precisely
\begin{equation*}
T \simeq \mathbb{L}_{\mathfrak{E}}(\Sigma^{3,1}\cU^{\vee})[-3].
\end{equation*}
\item $T$ is exceptional.
\item The collection $(T,\mathfrak{E}',\mathfrak{E}'(1))$ is exceptional.
\end{enumerate}
\end{lemma}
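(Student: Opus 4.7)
The plan splits into three pieces corresponding to the three claims; (ii) and (iii) will fall out once (i) is in hand, so the real work is in (i). As preparation, observe that $(\mathfrak{E}, \Sigma^{3,1}\cU^{\vee})$ is an exceptional collection: $\mathfrak{E}$ is exceptional by Corollary~\ref{corollary:e}, $\Sigma^{3,1}\cU^{\vee}$ is exceptional by Lemma~\ref{31U}, the semiorthogonality $\mathrm{Ext}^{\bullet}(\Sigma^{3,1}\cU^{\vee}, E) = 0$ for $E \in \mathfrak{E}$ comes from Corollary~\ref{17} at $k = 0$, and the remaining $\mathrm{Hom}$-spaces are computed in Lemma~\ref{all31}. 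Consequently $\mathbb{L}_{\mathfrak{E}}(\Sigma^{3,1}\cU^{\vee})$ is well-defined and automatically lies in $\mathfrak{E}^{\perp}$; identifying it with $T[3]$ is the content of (i).

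For (i), I would compute $\mathbb{L}_{\mathfrak{E}}(\Sigma^{3,1}\cU^{\vee})$ step by step using the staircase complex~\eqref{34} as a resolution of $\Sigma^{3,1}\cU^{\vee}$. Its four rightmost terms are sums of twists of objects of $\mathfrak{E}$ (built from $\Sigma^{2,1}\cU^{\vee}$, $\Lambda^{2}\cU^{\vee}$, $\cO$), while its three leftmost terms $\Sigma^{3,2}\cU^{\vee}(-3)$, $V \otimes \Sigma^{2,1}\cU^{\vee}(-2)$, $\Lambda^{2}V \otimes \cU^{\vee}(-1)$ are not. Mutation through $\Sigma^{2,1}\cU^{\vee}$ and then through $\Lambda^{2}\cU^{\vee}$ peels off the two rightmost terms of~\eqref{34}, replacing $\Sigma^{3,1}\cU^{\vee}$ by a shift of the kernel of the next differential. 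The crucial and nontrivial step is the mutation through $S^2\cU^{\vee}$: although $S^2\cU^{\vee}$ does not appear in~\eqref{34}, Lemma~\ref{lemma:32} furnishes a nonzero class in $\mathrm{Ext}^{4}(S^2\cU^{\vee}, \Sigma^{3,2}\cU^{\vee}(-3))$, and using the resolution of that kernel provided by~\eqref{34} I would show that $\mathrm{Hom}^{\bullet}(S^2\cU^{\vee}, \text{this kernel, shifted})$ is one-dimensional in a single degree. Unraveling the corresponding evaluation map through the Koszul complex~\eqref{15} for $k = 2$ (equivalently the second row of~\eqref{29}, which resolves $S^2\cU^{\vee}$) is exactly what inserts the bottom row of the bicomplex from Proposition~\ref{bicomplex} into the mutation. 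The remaining mutations through $\cU^{\vee}$ and $\cO$ introduce no further corrections by Lemma~\ref{19}, and the resulting object coincides with the bicomplex presentation of $T[3]$.

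Part (ii) is then immediate: by Proposition~\ref{proposition:mutations}, the left mutation of an exceptional collection is again exceptional, so from $(\mathfrak{E}, \Sigma^{3,1}\cU^{\vee})$ one obtains $(\mathbb{L}_{\mathfrak{E}}(\Sigma^{3,1}\cU^{\vee}), \mathfrak{E}) = (T[3], \mathfrak{E})$, whence $T$ is exceptional. For part (iii), the pair $(\mathfrak{E}', \mathfrak{E}'(1))$ is exceptional by Corollary~\ref{corollary:e-prime}, so it suffices to verify $\mathrm{Ext}^{\bullet}(F, T) = 0$ for each $F \in \mathfrak{E}' \cup \mathfrak{E}'(1)$. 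For the four bundles $\cO, \cU^{\vee}, S^2\cU^{\vee}, \Lambda^{2}\cU^{\vee}$ in $\mathfrak{E}' \cap \mathfrak{E}$ and for $\Sigma^{2,1}\cU^{\vee} \in \mathfrak{E}'(1) \cap \mathfrak{E}$, the vanishing follows from $T[3] \in \mathfrak{E}^{\perp}$ given by~(i). For $F = \Sigma^{2,1}\cU^{\vee}(-1)$ it is Lemma~\ref{lemma:t-s21}. For the remaining bundles $\cO(1), \cU^{\vee}(1), S^2\cU^{\vee}(1), \Lambda^{2}\cU^{\vee}(1)$ in $\mathfrak{E}'(1)$, every constituent of the bicomplex realizing $T$ lies in $\mathfrak{E}$ or equals $\Sigma^{3,1}\cU^{\vee}$, and the required Ext-vanishings hold term by term by Lemma~\ref{19} (at $k = 1$) and Lemma~\ref{18} (at $k = 1$, $\alpha \ne (2,1)$); the spectral sequence of the bicomplex then gives $\mathrm{Ext}^{\bullet}(F, T) = 0$.

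The main obstacle is (i): bookkeeping the successive mutation cones and, in particular, identifying the ``unexpected'' contribution from the mutation through $S^2\cU^{\vee}$ with precisely the bottom row of the bicomplex from Proposition~\ref{bicomplex}. The uniqueness up to scalar of the relevant $\operatorname{Sp}(V)$-equivariant maps, which was the theme of that proposition, should guarantee that the identification is canonical and pin down the isomorphism $T \simeq \mathbb{L}_{\mathfrak{E}}(\Sigma^{3,1}\cU^{\vee})[-3]$.
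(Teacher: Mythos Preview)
Parts (ii) and (iii) are fine and essentially agree with the paper. The gap is in (i).

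Your claim that ``the remaining mutations through $\cU^{\vee}$ and $\cO$ introduce no further corrections'' is incorrect. After the first three mutations (through $\Sigma^{2,1}\cU^\vee$, $\Lambda^2\cU^\vee$, $S^2\cU^\vee$) the result $L_3$ sits in a triangle $S^2\cU^\vee[1] \to K_2[2] \to L_3$ with $K_2$ the appropriate kernel in~\eqref{34}. Since $\mathrm{Ext}^\bullet(\cU^\vee, K_2) = 0$ (from the left resolution of $K_2$ by~\eqref{34} together with Lemma~\ref{19} and Lemma~\ref{lemma:32}) while $\mathrm{Hom}(\cU^\vee, S^2\cU^\vee) = V$, one finds $\mathrm{Ext}^\bullet(\cU^\vee, L_3) \cong V \ne 0$. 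So the fourth mutation cones on a copy of $V \otimes \cU^\vee$, and likewise the fifth cones on a copy of $\Lambda^2V \otimes \cO$ --- these are precisely the remaining terms of the bottom row of~\eqref{eq:first-right}, not something already supplied by ``unraveling'' $S^2\cU^\vee$. (Resolving $S^2\cU^\vee$ by the full Koszul complex~\eqref{15} would instead introduce an unwanted $\Lambda^2\cU^\perp$.) To finish along your route you would still need to identify each of the five successive evaluation maps with the corresponding differential of the bicomplex, which is exactly the equivariant-uniqueness bookkeeping you flag as the main obstacle --- only now repeated five times rather than once.

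The paper bypasses all of this by using the \emph{characterization} of a left mutation rather than its iterative construction. To prove $T \simeq \mathbb{L}_{\mathfrak{E}}(\Sigma^{3,1}\cU^\vee)[-3]$ it suffices to exhibit a morphism $\Sigma^{3,1}\cU^\vee[-3] \to T$ whose cone lies in $\langle\mathfrak{E}\rangle$ (immediate from the presentation~\eqref{eq:first-right}) and to check $T \in \mathfrak{E}^\perp$. For the four generators $\cO, \cU^\vee, \Lambda^2\cU^\vee, \Sigma^{2,1}\cU^\vee$ this orthogonality is read off from the \emph{other} presentation~\eqref{eq:first-left} via Lemma~\ref{19} and Lemma~\ref{T1ort}. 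Only $\mathrm{Ext}^\bullet(S^2\cU^\vee, T)$ requires a direct argument: computed from~\eqref{eq:first-right} it reduces to the three-term complex $\Bbbk \to V\otimes V \to (V\otimes V)/\omega$, whose exactness is checked using injectivity of $S^2\cU^\vee \to V\otimes\Sigma^{2,1}\cU^\vee$ and the hypercohomology spectral sequence of~\eqref{34}. This is considerably shorter than tracking five cones.
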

\begin{proof}
Let us prove (i). It is obvious from \eqref{eq:first-right} that there is a morphism $\Sigma^{3,1}\cU^\vee[-3] \to T$
whose cone is in $\mathfrak{E}$.
Hence to prove that $T\simeq \mathbb{L}_{\mathfrak{E}}\Sigma^{3,1}\cU^{\vee}[-3]$ it is enough to check that $T\subset \mathfrak{E}^{\perp}.$

Using the bicomplex~\eqref{eq:first-left}, Lemma~\ref{19} and Lemma \ref{T1ort} we see that $T\subset\langle \mathcal{O},\cU^{\vee},\Lambda^{2}\cU^{\vee},\Sigma^{2,1}\cU^{\vee}\rangle^{\perp}.$
So, it only remains to prove that
$\mathrm{Ext}^{\bullet}(S^2\cU^{\vee},T)=0$.
Indeed, from the bicomplex~\eqref{eq:first-right} and Lemma~\ref{19} we see
that~$\mathrm{Ext}^{\bullet}(S^2\cU^{\vee},T)$ is quasi-isomorphic to the complex
\begin{equation*}
\xymatrix@R=2ex{
\mathrm{Hom}(S^2\cU^{\vee},S^2\cU^{\vee}) \ar[r] \ar@{=}[d]& \mathrm{Hom}(S^2\cU^{\vee},V\otimes\Sigma^{2,1}\cU^{\vee})\ar[r]\ar@{=}[d]& \mathrm{Hom}(S^2\cU^{\vee},\Sigma^{3,1}\cU^{\vee}) \ar@{=}[d]\\
\Bbbk\ar [r]& V\otimes V\ar[r] & (V\otimes V)/\omega.
}
\end{equation*}
For the last isomorphism in the diagram see Lemma \ref{all31}.
The first map is induced by the injective morphism $S^2\cU^{\vee} \to V \otimes \Sigma^{2,1}\cU^{\vee}$
(see the proof of Proposition~\ref{bicomplex}), hence is injective.
On the other hand, all Ext-groups from $S^2\cU^{\vee}$ to the exact complex~\eqref{34} vanish,
so using the corresponding hypercohomology spectral sequence and Lemma~\ref{19} and Lemma~\ref{lemma:32},
we can check that the second map is surjective.
It follows that the complex is exact, hence $\mathrm{Ext}^{\bullet}(S^2\cU^{\vee},T) = 0$.

Let us prove (ii).
By Lemma~\ref{31U} the bundle $\Sigma^{3,1}\cU^{\vee}$ is exceptional.
Since $T\simeq \mathbb{L}_{\mathfrak{E}}\Sigma^{3,1}\cU^{\vee}[-3]$ by (i)
and~$\Sigma^{3,1}\cU^{\vee}\in {}^{\perp}\mathfrak{E}$ by Corollary~\ref{17},
we conclude that $T$ is exceptional as well.

Finally, let us prove (iii).
From Lemma~\ref{18} and the definition of $\mathfrak{E}'$, it follows that $\Sigma^{3,1}\cU^\vee$
is right orthogonal to $\langle \mathfrak{E}(1) \rangle$, hence by Lemma~\ref{19} the same is true for $T$.
Moreover, by part (i) we know that~$T$ is right orthogonal to~$\mathfrak{E}$.
Since obviously $\langle \fE', \fE'(1) \rangle \subset \langle \Sigma^{2,1}\cU^\vee(-1), \fE, \fE(1) \rangle$,
it only remains to show that~$T$ is right orthogonal to $\Sigma^{2,1}\cU^\vee(-1)$.
But this was proved in Lemma~\ref{lemma:t-s21}.
\end{proof}

\begin{corollary}
\label{corollary:t1}
We have
\begin{equation*}
T \cong \mathbb{L}_{\mathfrak{E}',\mathfrak{E}'(1)}(\Sigma^{3,1}\cU^\vee[-3]).
\end{equation*}
\end{corollary}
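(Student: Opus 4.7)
The idea is to combine the two preceding parts of Lemma~\ref{9} with a simple inclusion of subcategories, so that the claim reduces to the uniqueness of left mutation.

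First, I recall that Lemma~\ref{9}(i) yields a distinguished triangle
\begin{equation*}
A \to \Sigma^{3,1}\cU^\vee[-3] \to T \to A[1]
\end{equation*}
with $A \in \langle \mathfrak{E}\rangle$, since by construction $T[3] = \mathbb{L}_{\mathfrak{E}}(\Sigma^{3,1}\cU^\vee)$ and the left mutation triangle expresses~$\Sigma^{3,1}\cU^\vee$ modulo the subcategory $\langle \mathfrak{E}\rangle$.

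Next, I observe that every generator of the block $\mathfrak{E} = (\mathcal{O}, \cU^\vee, S^2\cU^\vee, \Lambda^2\cU^\vee, \Sigma^{2,1}\cU^\vee)$ appears among the generators of $\mathfrak{E}' \cup \mathfrak{E}'(1)$: the first four lie in~$\mathfrak{E}'$, while $\Sigma^{2,1}\cU^\vee$ lies in $\mathfrak{E}'(1)$. Hence $\langle \mathfrak{E}\rangle \subset \langle \mathfrak{E}', \mathfrak{E}'(1)\rangle$, and in particular the object~$A$ in the triangle above lies in $\langle \mathfrak{E}', \mathfrak{E}'(1)\rangle$.

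Finally, Lemma~\ref{9}(iii) tells us that $(T, \mathfrak{E}', \mathfrak{E}'(1))$ is an exceptional collection, which is exactly the statement that $T \in \langle \mathfrak{E}', \mathfrak{E}'(1)\rangle^\perp$. Combined with the triangle exhibiting $T$ as the cone of a morphism from an object of~$\langle \mathfrak{E}', \mathfrak{E}'(1)\rangle$ into $\Sigma^{3,1}\cU^\vee[-3]$, the semiorthogonal decomposition $\langle \langle \mathfrak{E}', \mathfrak{E}'(1)\rangle, \langle \mathfrak{E}', \mathfrak{E}'(1)\rangle^\perp\rangle$ characterizes $T$ uniquely as $\mathbb{L}_{\mathfrak{E}', \mathfrak{E}'(1)}(\Sigma^{3,1}\cU^\vee[-3])$, which is the claim. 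There is no real obstacle here; the argument is purely formal once the previous lemma has been established.
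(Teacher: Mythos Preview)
Your proof is correct and follows the same route as the paper's one-line argument: combine Lemma~\ref{9}(i) and~(iii) with the inclusion $\langle \mathfrak{E}\rangle \subset \langle \mathfrak{E}', \mathfrak{E}'(1)\rangle$. One cosmetic slip: the semiorthogonal decomposition should be written $\langle \langle \mathfrak{E}', \mathfrak{E}'(1)\rangle^\perp,\ \langle \mathfrak{E}', \mathfrak{E}'(1)\rangle\rangle$, since by definition of the right orthogonal the morphisms vanish from the second factor into the first.
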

\begin{proof}
Follows from parts (i) and (iii) of Lemma~\ref{9} since obviously $\fE \subset \langle \fE', \fE'(1) \rangle$.
\end{proof}

The crucial computation is given by the following

\begin{proposition} \label{T}
We have
\begin{equation*}
\mathbb{L}_{\mathfrak{E}'(-2),\mathfrak{E}'(-1)}(T) \cong T(-2)[4].
\end{equation*}
\end{proposition}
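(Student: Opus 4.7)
The plan is to exhibit a distinguished triangle
\begin{equation*}
A \longrightarrow T \longrightarrow T(-2)[4]
\end{equation*}
in $\mathrm{D}^{b}(X)$ with $A\in\langle\mathfrak{E}'(-2),\mathfrak{E}'(-1)\rangle$; together with the semiorthogonality $T(-2)[4]\in\langle\mathfrak{E}'(-2),\mathfrak{E}'(-1)\rangle^{\perp}$, this will identify $T(-2)[4]$ with $\mathbb{L}_{\mathfrak{E}'(-2),\mathfrak{E}'(-1)}(T)$ as desired.

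First I would verify the semiorthogonality. By Lemma~\ref{9}(iii) the collection $(T,\mathfrak{E}',\mathfrak{E}'(1))$ is exceptional, so $T\in\langle\mathfrak{E}',\mathfrak{E}'(1)\rangle^{\perp}$. Twisting by $\cO(-2)$ gives that $(T(-2),\mathfrak{E}'(-2),\mathfrak{E}'(-1))$ is exceptional, so $T(-2)$, and hence $T(-2)[4]$, is right orthogonal to $\langle\mathfrak{E}'(-2),\mathfrak{E}'(-1)\rangle$.

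Next I would construct the triangle using the representation of $T$ given by the bicomplex~\eqref{eq:first-left}. The bundles in its two rightmost columns, namely $\Lambda^{2}V\otimes\cU^{\vee}(-1)$, $\Lambda^{3}V\otimes\cO(-1)$, $V\otimes\Sigma^{2,1}\cU^{\vee}(-2)$, $\Lambda^{2}V\otimes\Lambda^{2}\cU^{\vee}(-2)$, already lie in $\mathfrak{E}'(-2)\cup\mathfrak{E}'(-1)$, whereas the three leftmost terms $\Sigma^{3,3}\cU^{\vee}(-4)$, $V\otimes\Sigma^{2,2}\cU^{\vee}(-3)$, $\Sigma^{3,2}\cU^{\vee}(-3)$ do not. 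Each of these extra bundles I would resolve via the appropriate $\cO(-2)$-twist of one of the staircase complexes~\eqref{29}, \eqref{34}, \eqref{staircase}; the new terms thereby introduced either lie in $\langle\mathfrak{E}'(-2),\mathfrak{E}'(-1)\rangle$ or match exactly the terms appearing in the bicomplex~\eqref{eq:first-left} twisted by~$\cO(-2)$ (which represents $T(-2)$). Splicing these resolutions into~\eqref{eq:first-left} produces a morphism from the total complex of~\eqref{eq:first-left} to the total complex of its $\cO(-2)$-twist shifted by $[4]$, i.e.\ a morphism $T\to T(-2)[4]$, whose fibre is, by construction, a complex of bundles in $\langle\mathfrak{E}'(-2),\mathfrak{E}'(-1)\rangle$.

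The $\Sp(V)$-equivariance and uniqueness arguments of Proposition~\ref{bicomplex} and Lemma~\ref{lemma:s22-s32}, combined with the vanishing statements of Lemma~\ref{19}, Lemma~\ref{lemma:32}, and Lemma~\ref{T1ort}, ensure that all the morphisms in the construction are nonzero and unique up to scalar, so the resulting morphism $T\to T(-2)[4]$ and its fibre are well-defined up to scalar. I expect the main obstacle to be the precise bookkeeping of homological shifts and twists when composing~\eqref{eq:first-left} with the three staircase resolutions of the extra bundles and recognising the outcome as the $[4]$-shift of the total complex computing $T(-2)$. An alternative, more abstract route would be to first compute $\mathrm{Hom}^{\bullet}(T,T(-2)[4])$ via hypercohomology on~\eqref{eq:first-left}, isolate a distinguished nonzero class using the vanishing lemmas of Section~\ref{section:db}, and then verify that the fibre of the corresponding morphism has no components in $\langle\mathfrak{E}'(-2),\mathfrak{E}'(-1)\rangle^{\perp}$ by computing $\mathrm{Ext}^{\bullet}(E,\mathrm{fib})$ for $E$ running over generators of the right orthogonal.
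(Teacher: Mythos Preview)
Your overall strategy --- produce a triangle $A\to T\to T(-2)[4]$ with $A\in\langle\mathfrak{E}'(-2),\mathfrak{E}'(-1)\rangle$ and then invoke the semiorthogonality $T(-2)\in\langle\mathfrak{E}'(-2),\mathfrak{E}'(-1)\rangle^{\perp}$ --- is correct, and your verification of that semiorthogonality via Lemma~\ref{9}(iii) is fine. The gap is in the actual construction of the triangle.

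The specific mechanism you propose does not work. The $\cO(-2)$-twists of~\eqref{29}, \eqref{34}, \eqref{staircase} do not resolve any of the bundles $\Sigma^{3,3}\cU^{\vee}(-4)$, $\Sigma^{2,2}\cU^{\vee}(-3)$, $\Sigma^{3,2}\cU^{\vee}(-3)$: for instance~\eqref{34} twisted by~$\cO(-2)$ has $\Sigma^{3,2}\cU^{\vee}(-5)$ and $\Sigma^{3,1}\cU^{\vee}(-2)$ at its ends, neither of which is on your list. If instead you use the untwisted~\eqref{34} (whose leftmost term \emph{is} $\Sigma^{3,2}\cU^{\vee}(-3)$), the remaining terms include $\cO$, $\Lambda^{2}\cU^{\vee}$, $\Sigma^{2,1}\cU^{\vee}$, $\Sigma^{3,1}\cU^{\vee}$, which lie neither in $\langle\mathfrak{E}'(-2),\mathfrak{E}'(-1)\rangle$ nor among the terms of~\eqref{eq:first-left}${}\otimes\cO(-2)$. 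So the assertion that ``the new terms \ldots\ either lie in $\langle\mathfrak{E}'(-2),\mathfrak{E}'(-1)\rangle$ or match exactly the terms appearing in the bicomplex~\eqref{eq:first-left} twisted by~$\cO(-2)$'' is simply false, and what you describe as a bookkeeping obstacle is in fact a missing idea.

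The paper's route avoids resolving those three bundles individually. Instead it passes through the auxiliary object $T'$ given by~\eqref{eq:second-left}. One has the triangle~\eqref{eq:tt}, namely $T[-1]\to T'\to G$, with $G\in\langle\mathfrak{E}'(-1)\rangle$, so the left mutation of $T$ reduces to that of $T'$. Then Lemma~\ref{T_2} identifies $\mathcal{H}^{0}(T')\cong\Sigma^{3,1}\cU^{\vee}(-2)$ and shows that all semisimple factors of $\mathcal{H}^{-1}(T')$ lie in $\langle\mathfrak{E}'(-2),\mathfrak{E}'(-1)\rangle$ (the factors involving $\cS$ being placed there via~\eqref{eq:upu-monad}). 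Hence the left mutation of $T'$ equals that of $\Sigma^{3,1}\cU^{\vee}(-2)$, which is $T(-2)[3]$ by Corollary~\ref{corollary:t1}. The key ingredient you are missing is precisely this cohomological description of $T'$: it packages the three awkward bundles of~\eqref{eq:second-left} into the single object $\Sigma^{3,1}\cU^{\vee}(-2)$ whose mutation is already known.
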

\begin{proof}
Recall that by Lemma~\ref{T_2} we have a natural morphism
\begin{equation}
\label{eq:t2-morphism}
T' \to \mathcal{H}^0(T') \cong \Sigma^{3,1}\cU^\vee(-2).
\end{equation}
Note that its cone, $\mathcal{H}^{-1}(T')[2]$, is contained in the subcategory $\langle \mathfrak{E}'(-2), \mathfrak{E}'(-1) \rangle$.
Indeed, this follows from the description of its associated semisimple bundle in Lemma~\ref{T_2}
together with the evident inclusions
\begin{equation*}
\cO(-2),\ \cU(-1) \in \langle \mathfrak{E}'(-2) \rangle
\end{equation*}
(since $\cU(-1) \cong \Lambda^2\cU^\vee(-2)$),
and with slightly less evident inclusions (using~\eqref{eq:upu-monad})
\begin{align*}
\cO(-1) \otimes \cS &\cong
\Big\{ \cU(-1) \to V \otimes \cO(-1) \to \cU^\vee(-1) \Big\} &&
\in \langle \mathfrak{E}'(-2), \mathfrak{E}'(-1) \rangle
\\
\intertext{and}
\cU^\vee(-2) \otimes \cS &\cong
\Big\{ \cU \otimes \cU^\vee(-2) \to V \otimes \cU^\vee(-2) \to \cU^\vee \otimes \cU^\vee(-2) \Big\} &&
\in \langle \mathfrak{E}'(-2) \rangle
\end{align*}
(since $\cU \otimes \cU^\vee(-2) \cong \Sigma^{2,1}\cU^\vee(-3) \oplus \cO(-2)$
and $\cU^\vee \otimes \cU^\vee(-2) \cong S^2\cU^\vee(-2) \oplus \Lambda^2\cU^\vee(-2)$).
Therefore, applying the mutation functor $\mathbb{L}_{\mathfrak{E}'(-2),\mathfrak{E}'(-1)}$ to the morphism~\eqref{eq:t2-morphism},
we conclude that
\begin{equation*}
\mathbb{L}_{\mathfrak{E}'(-2),\mathfrak{E}'(-1)}(T') \cong
\mathbb{L}_{\mathfrak{E}'(-2),\mathfrak{E}'(-1)}(\Sigma^{3,1}\cU^\vee(-2)) \cong T(-2)[3],
\end{equation*}
where the second isomorphism follows from Corollary~\ref{corollary:t1}.

On the other hand, applying the functor $\mathbb{L}_{\mathfrak{E}'(-2),\mathfrak{E}'(-1)}$ to the triangle~\eqref{eq:tt}
and taking into account that we have $G \in \langle \mathfrak{E}'(-1) \rangle$, we conclude that
\begin{equation*}
\mathbb{L}_{\mathfrak{E}'(-2),\mathfrak{E}'(-1)}(T') \cong \mathbb{L}_{\mathfrak{E}'(-2),\mathfrak{E}'(-1)}(T[-1]).
\end{equation*}
From these two observations, we deduce
\begin{equation*}
\mathbb{L}_{\mathfrak{E}'(-2),\mathfrak{E}'(-1)}(T[-1]) \cong T(-2)[3]
\end{equation*}
which finally proves the lemma.
%
%
%
%
\end{proof}

\subsection{Exceptional collection}

Now we are ready to construct the first version of the exceptional collection from Theorem~\ref{intro:main}.

\begin{proposition}
\label{proposition:ec-t}
The following two collections of vector bundles
\begin{align}
\label{preanother-collection}
&\langle T, \fE', T(1), \fE'(1), \fE'(2), \fE'(3), \fE'(4), \fE'(5) \rangle,\qquad\text{and}\\
\label{another-collection}
&\langle T, \fE', \fE'(1), \fE'(2), T(3), \fE'(3), \fE'(4), \fE'(5) \rangle
\end{align}
are exceptional.
They generate the same subcategory of $\Db(\IGr(3,V))$.
%
\end{proposition}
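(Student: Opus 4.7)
My plan is to prove the statement in two stages: first, establish that the two collections generate the same subcategory (and that exceptionality transfers between them) via a single mutation, and then verify exceptionality for one of them.

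\emph{Equivalence of subcategories.} Twisting Proposition~\ref{T} by $\cO(3)$ yields the isomorphism
\[
\LL_{\fE'(1),\fE'(2)}(T(3)) \cong T(1)[4].
\]
Consequently the triples $(T(1),\fE'(1),\fE'(2))$ and $(\fE'(1),\fE'(2),T(3))$ are related by a mutation (left mutation of $T(3)$ past $\langle\fE'(1),\fE'(2)\rangle$), so by Proposition~\ref{proposition:mutations} they generate the same triangulated subcategory, and mutation sends (full) exceptional collections to (full) exceptional collections. Since the remaining substrings of~\eqref{preanother-collection} and~\eqref{another-collection} coincide, the two collections generate the same subcategory of $\Db(X)$, and exceptionality of one is equivalent to exceptionality of the other.

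\emph{Exceptionality.} I will verify it for~\eqref{preanother-collection}. A large portion of the required semiorthogonalities is already in hand: those internal to the block $(\fE',\fE'(1),\ldots,\fE'(5))$ come from Corollary~\ref{corollary:e-prime}; the vanishings $\mathrm{Ext}^\bullet(\fE'(k),T) = 0$ for $k \in \{0,1\}$ are part of Lemma~\ref{9}(iii); and twisting by $\cO(1)$ gives $\mathrm{Ext}^\bullet(\fE'(k),T(1)) = 0$ for $k \in \{1,2\}$. The remaining cross-terms to check are $\mathrm{Ext}^\bullet(T(1),T)$, $\mathrm{Ext}^\bullet(T(1),\fE')$, $\mathrm{Ext}^\bullet(\fE'(k),T)$ for $k \in \{2,3,4,5\}$, and $\mathrm{Ext}^\bullet(\fE'(k),T(1))$ for $k \in \{3,4,5\}$.

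For these the main tool is Proposition~\ref{T}: rotating its defining triangle gives the inclusion $T \in \langle T(-2),\fE'(-2),\fE'(-1)\rangle$, and iterating produces $T \in \langle T(-2j),\fE'(-2j),\ldots,\fE'(-1)\rangle$ for every $j \ge 1$. Applying the functors $\mathrm{Ext}^\bullet(\fE'(k),-)$ or $\mathrm{Ext}^\bullet(T(1),-)$ to such an inclusion reduces each outstanding vanishing to one of three already-controlled situations: a Lefschetz semiorthogonality from Corollary~\ref{corollary:e-prime}, a vanishing covered by Lemma~\ref{9}(iii) (or the computations of Lemma~\ref{19}), or an $\mathrm{Ext}$-group of the form $\mathrm{Ext}^\bullet(E, T(-2j))$ at sufficiently negative twist that Serre duality, using $\omega_X \cong \cO(-6)$, converts it back into one of the previous two types. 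The main obstacle is the combinatorial bookkeeping: one must choose the correct iteration depth of Proposition~\ref{T} for each missing cross-term and verify that every Lefschetz- and Serre-dual reduction lands within the already-controlled range; no new global cohomology computations beyond those of Sections~\ref{section:db} and~\ref{section:exact} should be needed.
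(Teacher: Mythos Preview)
Your equivalence-of-subcategories argument via the mutation from Proposition~\ref{T} is correct and matches the paper.

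The exceptionality argument, however, has a genuine gap. Your iterative inclusion $T\in\langle T(-2j),\fE'(-2j),\dots,\fE'(-1)\rangle$ produces $\fE'$-terms that fall outside the Lefschetz range of Corollary~\ref{corollary:e-prime}: already at $j=1$, computing $\mathrm{Ext}^\bullet(\fE'(5),T)$ forces you to control $\mathrm{Ext}^\bullet(\fE'(5),\fE'(-1))$, and this is \emph{nonzero} (for instance $\mathrm{Ext}^{12}(\cO(5),\cO(-1))\cong H^{12}(X,\omega_X)=\Bbbk$). If instead you apply Serre duality to the residual term $\mathrm{Ext}^\bullet(\fE'(k),T(-2j))$, you land on $\mathrm{Ext}^\bullet(T,\fE'(k+2j-6))$; but Lemma~\ref{9}(iii) only gives $\mathrm{Ext}^\bullet(\fE'(m),T)=0$ for $m\in\{0,1\}$ --- the opposite direction --- and Lemma~\ref{19} says nothing about $T$. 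So none of your three ``already-controlled situations'' actually covers this. The missing input is the relation $T\cong\LL_{\fE',\fE'(1)}(\Sigma^{3,1}\cU^\vee)[-3]$ of Corollary~\ref{corollary:t1} together with Lemma~\ref{18}, and once those are invoked the iteration of Proposition~\ref{T} is superfluous. Likewise, $\mathrm{Ext}^\bullet(T(1),T)=0$ ultimately rests on the separate Borel--Bott--Weil computation $\mathrm{Ext}^\bullet(\Sigma^{3,1}\cU^\vee(3),\Sigma^{3,1}\cU^\vee)=0$ of Lemma~\ref{31(3)}, which your list omits.

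The paper's proof is more direct: it verifies~\eqref{another-collection} rather than~\eqref{preanother-collection}, uses Corollary~\ref{corollary:t1} and Lemma~\ref{18} to get $\mathrm{Ext}^\bullet(\fE'(k),T)=0$ for $2\le k\le5$ in one step, then Serre duality for the left-orthogonality of $T(3)$ to $\fE',\fE'(1),\fE'(2)$, and Lemma~\ref{31(3)} for the semiorthogonality of $T$ and $T(3)$.
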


Note that~\eqref{preanother-collection} is Lefshetz with respect to $\cO(1)$, but not rectangular.
On the other hand, \eqref{another-collection} is rectangular with respect to $\cO(3)$.

\begin{proof}
By Proposition~\ref{T} the first collection is obtained from the second by the left mutation of $T(3)$ through $\fE'(2)$ and $\fE'(1)$,
so it is enough to check that~\eqref{another-collection} is exceptional.
Furthermore, by Corollary~\ref{corollary:e-prime} and Lemma~\ref{9}(ii) it is enough to check that $T$ and $T(3)$ are semiorthogonal to $\fE'(i)$.

First, by Lemma~\ref{9}(iii) we know that $(T,\mathfrak{E}', \mathfrak{E}'(1))$ is an exceptional collection.
Furthermore, $\Sigma^{3,1}\cU^\vee$ is right orthogonal to $(\mathfrak{E}'(2), \mathfrak{E}'(3), \mathfrak{E}'(4), \mathfrak{E}'(5))$
by Lemma~\ref{18}, hence by Corollary~\ref{corollary:e-prime} and Corollary~\ref{corollary:t1} so is $T$.
Thus, the collection
\begin{equation*}
T,\mathfrak{E}', \mathfrak{E}'(1), \mathfrak{E}'(2), \mathfrak{E}'(3), \mathfrak{E}'(4), \mathfrak{E}'(5)
\end{equation*}
is exceptional.

Moreover, this argument also proves that $T(3)$ is right orthogonal to $(\mathfrak{E}'(3), \mathfrak{E}'(4), \mathfrak{E}'(5))$,
and by Serre duality it also follows that it is left orthogonal to $(\mathfrak{E}', \mathfrak{E}'(1), \mathfrak{E}'(2))$.
So, it remains to show that $T(3)$ and~$T$ are semiorthogonal.
But this evidently follows from the semiorthogonality of~$\Sigma^{3,1}\cU^\vee(3)$ and~$\Sigma^{3,1}\cU^\vee$, Lemma~\ref{31(3)},
and a combination of Corollary~\ref{corollary:e-prime} and Corollary~\ref{corollary:t1}.
\end{proof}

Now we can pass from~\eqref{preanother-collection} and~\eqref{another-collection} to the other collections of Theorem~\ref{intro:main}.
Recall from Lemma~\ref{T_1} that the last semisimple factor of $T$ is the bundle $\Sigma^{2,1}\cU^\vee(-1)$,
hence we have a canonical epimorphism~$T \to \Sigma^{2,1}\cU^\vee(-1)$
(this morphism can be also constructed from the bicomplex~\eqref{eq:first-left}
and a natural morphism~$\Lambda^2V \otimes \cU^\vee(-1) \to \Sigma^{2,1}\cU^\vee(-1)$).
We denote its kernel by $F$, so that we have an exact sequence
\begin{equation}
\label{eq:def-f}
0 \to F \to T \to \Sigma^{2,1}\cU^\vee(-1) \to 0.
\end{equation}
We prove the following

\begin{lemma}
\label{lemma:tf-mutation}
The sequence~\eqref{eq:def-f} is a right mutation sequence, i.e.,
\begin{equation*}
F \cong \mathbb{R}_{\Sigma^{2,1}\cU^\vee(-1)}(T).
\end{equation*}
In particular, $F$ is an exceptional vector bundle.
Conversely,
\begin{equation*}
T\cong \mathbb{L}_{\Sigma^{2,1}\cU^\vee(-1)}(F).
\end{equation*}
\end{lemma}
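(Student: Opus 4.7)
The plan is to identify~\eqref{eq:def-f} with the distinguished triangle defining the right mutation of $T$ through $\Sigma^{2,1}\cU^\vee(-1)$. By the very definition of $\mathbb{R}_{\Sigma^{2,1}\cU^\vee(-1)}$, it suffices to check that $\mathrm{Hom}^\bullet(T,\Sigma^{2,1}\cU^\vee(-1))=\Bbbk$ is one-dimensional in degree zero; the canonical surjection $T\to\Sigma^{2,1}\cU^\vee(-1)$ in~\eqref{eq:def-f} (which is the projection onto the $\Sigma^{2,1}\cU^\vee(-1)$-summand of $\mathbf{ss}(T)$ described in Lemma~\ref{T_1}) is then automatically a scalar multiple of the evaluation map, and so its kernel $F$ realizes the right mutation.

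For this Hom computation I would invoke the identification $T\simeq\mathbb{L}_{\fE}(\Sigma^{3,1}\cU^\vee)[-3]$ from Lemma~\ref{9}(i). Applying $\mathrm{Hom}^\bullet(-,\Sigma^{2,1}\cU^\vee(-1))$ to the defining triangle $A\to\Sigma^{3,1}\cU^\vee\to\mathbb{L}_{\fE}(\Sigma^{3,1}\cU^\vee)$ with $A\in\langle\fE\rangle$, the third term contributes nothing: for every $E\in\fE$ one has
\begin{equation*}
\mathrm{Hom}^\bullet(E,\Sigma^{2,1}\cU^\vee(-1))=\mathrm{Ext}^\bullet(E(1),\Sigma^{2,1}\cU^\vee)=0
\end{equation*}
by Lemma~\ref{19}, since the twist $k=1$ is nonzero. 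The middle term
\begin{equation*}
\mathrm{Hom}^\bullet(\Sigma^{3,1}\cU^\vee,\Sigma^{2,1}\cU^\vee(-1))=\mathrm{Ext}^\bullet(\Sigma^{3,1}\cU^\vee(1),\Sigma^{2,1}\cU^\vee)=\Bbbk[-3]
\end{equation*}
is given by Corollary~\ref{17}. Hence $\mathrm{Hom}^\bullet(\mathbb{L}_\fE(\Sigma^{3,1}\cU^\vee),\Sigma^{2,1}\cU^\vee(-1))=\Bbbk[-3]$, and taking into account the shift $[-3]$ in the definition of $T$ yields $\mathrm{Hom}^\bullet(T,\Sigma^{2,1}\cU^\vee(-1))=\Bbbk$ in degree zero, as required.

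For the exceptionality of $F$, observe that the pair $(T,\Sigma^{2,1}\cU^\vee(-1))$ is already exceptional: both objects are exceptional (for $T$ by Lemma~\ref{9}(ii), and for $\Sigma^{2,1}\cU^\vee(-1)$ as a twist of an exceptional bundle in $\fE$), and $\mathrm{Ext}^\bullet(\Sigma^{2,1}\cU^\vee(-1),T)=0$ by Lemma~\ref{lemma:t-s21}. By Proposition~\ref{proposition:mutations}, the mutated pair $(\Sigma^{2,1}\cU^\vee(-1),F)$ is then also exceptional, so $F$ is exceptional. The identity $T\cong\mathbb{L}_{\Sigma^{2,1}\cU^\vee(-1)}(F)$ is immediate: since left and right mutations through an exceptional object are mutually inverse (Proposition~\ref{proposition:mutations}), one has $T=\mathbb{L}_{\Sigma^{2,1}\cU^\vee(-1)}\bigl(\mathbb{R}_{\Sigma^{2,1}\cU^\vee(-1)}(T)\bigr)=\mathbb{L}_{\Sigma^{2,1}\cU^\vee(-1)}(F)$. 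There is no serious obstacle in this argument; the only small point to confirm is that the morphism in~\eqref{eq:def-f} is nonzero, which is evident from its construction as a projection onto a semisimple factor of $T$.
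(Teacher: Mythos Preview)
Your proof is correct and follows essentially the same strategy as the paper's: both compute $\mathrm{Hom}^\bullet(T,\Sigma^{2,1}\cU^\vee(-1))=\Bbbk$ by reducing to the single Ext between $\Sigma^{3,1}\cU^\vee$ and $\Sigma^{2,1}\cU^\vee$, then invoke Lemma~\ref{lemma:t-s21} for exceptionality and the inverse relation between $\LL$ and $\RR$. The only cosmetic difference is that the paper computes $\mathrm{Ext}^\bullet(\Sigma^{2,1}\cU^\vee(5),T)$ via Lemma~\ref{18} and then applies Serre duality, whereas you use Corollary~\ref{17} directly---but since Corollary~\ref{17} is itself Lemma~\ref{18} plus Serre duality, the two routes are the same computation in a different order.
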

\begin{proof}
By Lemma~\ref{18} and Lemma~\ref{9}(i) we have
\begin{equation*}
\operatorname{Ext}^\bullet(\Sigma^{2,1}\cU^\vee(5),T) \cong
\operatorname{Ext}^\bullet(\Sigma^{2,1}\cU^\vee(5),\Sigma^{3,1}\cU^\vee[-3]) \cong \Bbbk[-12].
\end{equation*}
Therefore, by Serre duality and~\eqref{eq:kx} we have
\begin{equation*}
\operatorname{Ext}^\bullet(T,\Sigma^{2,1}\cU^\vee(-1)) \cong \Bbbk.
\end{equation*}
This proves that the right mutation of $T$ through $\Sigma^{2,1}\cU^\vee(-1)$ is the shifted cone
of the unique nontrivial morphism from $T$ to $\Sigma^{2,1}\cU^\vee(-1)$.
Comparing with the definition of $F$, we conclude that this is equal to~$F$.
Since $T$ is right orthogonal to $\Sigma^{2,1}\cU^\vee(-1)$ by Lemma~\ref{lemma:t-s21},
it follows that $F$ is exceptional and that we also have $T\cong \mathbb{L}_{\Sigma^{2,1}\cU^\vee(-1)}(F).$
\end{proof}

\begin{remark}
\label{remark:f}
Using Lemma~\ref{T_1} it is easy to see that $F(1)$ is an iterated extension of the bundles
\begin{equation*}
\cO \otimes \cS,\qquad
\cU^\vee,\quad\text{and}\quad
S^2\cU^\vee \otimes \cS.
\end{equation*}
It can be deduced from this that $F(1)$ coincides with the exceptional bundle $\mathcal{E}^{2,0,0;1}$ from~\cite{1}.
However, we will not use this fact, so we leave it without a proof.
\end{remark}

\begin{corollary} \label{FF}
We have $\mathbb{L}_{\mathfrak{E}(-2),\mathfrak{E}(-1)}(F)\cong F(-2)[4]$.
Conversely, $\mathbb{R}_{\mathfrak{E},\mathfrak{E}(1)}(F)\cong F(2)[-4]$.
\end{corollary}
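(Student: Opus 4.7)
The plan is to reduce the left mutation of $F$ to that of $T$ via~\eqref{eq:def-f}, identify the result through Proposition~\ref{T} applied inside an intermediate subcategory, and recognize the final object as $F(-2)[4]$ via Lemma~\ref{lemma:tf-mutation}. Applying $\mathbb{L}_{\mathfrak{E}(-2),\mathfrak{E}(-1)}$ to~\eqref{eq:def-f}, the quotient $\Sigma^{2,1}\cU^{\vee}(-1)$ lies in $\mathfrak{E}(-1)$ and is annihilated, so $\mathbb{L}_{\mathfrak{E}(-2),\mathfrak{E}(-1)}(F) \cong \mathbb{L}_{\mathfrak{E}(-2),\mathfrak{E}(-1)}(T)$. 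Write $\mathcal{C}$ for the triangulated subcategory generated by the nine bundles common to $\mathfrak{E}(-2) \cup \mathfrak{E}(-1)$ and $\mathfrak{E}'(-2) \cup \mathfrak{E}'(-1)$; then
\begin{equation*}
\langle \mathfrak{E}(-2), \mathfrak{E}(-1) \rangle = \langle \mathcal{C}, \Sigma^{2,1}\cU^{\vee}(-1) \rangle,\qquad \langle \mathfrak{E}'(-2), \mathfrak{E}'(-1) \rangle = \langle \Sigma^{2,1}\cU^{\vee}(-3), \mathcal{C} \rangle.
\end{equation*}
By Lemma~\ref{lemma:t-s21} the factor $\mathbb{L}_{\Sigma^{2,1}\cU^\vee(-1)}$ fixes $T$, so $\mathbb{L}_{\mathfrak{E}(-2),\mathfrak{E}(-1)}(T) = \mathbb{L}_\mathcal{C}(T)$, while Proposition~\ref{T} translates into $\mathbb{L}_{\Sigma^{2,1}\cU^{\vee}(-3)}(\mathbb{L}_\mathcal{C}(T)) = T(-2)[4]$.

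The key technical step is to show $\mathrm{Hom}^\bullet(\mathbb{L}_\mathcal{C}(T), \Sigma^{2,1}\cU^{\vee}(-3)) = 0$; granted this, Proposition~\ref{proposition:mutations} inverts $\mathbb{L}_{\Sigma^{2,1}\cU^{\vee}(-3)}$ by $\mathbb{R}_{\Sigma^{2,1}\cU^{\vee}(-3)}$, yielding
\begin{equation*}
\mathbb{L}_\mathcal{C}(T) = \mathbb{R}_{\Sigma^{2,1}\cU^{\vee}(-3)}(T(-2)[4]) = F(-2)[4],
\end{equation*}
where the second equality is Lemma~\ref{lemma:tf-mutation} after twisting by $\cO(-2)$ and shifting by~$[4]$. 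Using the mutation triangle $\mathcal{C}_0 \to T \to \mathbb{L}_\mathcal{C}(T)$, the vanishing reduces to (a)~$\mathrm{Hom}^\bullet(\mathcal{C}, \Sigma^{2,1}\cU^{\vee}(-3)) = 0$, which is semi-orthogonality in the exceptional collection $\mathfrak{E}'(-2),\mathfrak{E}'(-1)$; and (b)~$\mathrm{Hom}^\bullet(T, \Sigma^{2,1}\cU^{\vee}(-3)) = 0$. For (b), use $T[3] \cong \mathbb{L}_\mathfrak{E}(\Sigma^{3,1}\cU^\vee)$ from Lemma~\ref{9}(i): applying $\mathrm{Hom}^\bullet(-, \Sigma^{2,1}\cU^\vee(-3))$ to the triangle $\mathfrak{E}_0 \to \Sigma^{3,1}\cU^\vee \to T[3]$ with $\mathfrak{E}_0 \in \langle\mathfrak{E}\rangle$, both outer terms vanish because $\mathrm{Ext}^\bullet(\Sigma^{3,1}\cU^\vee(3), \Sigma^{2,1}\cU^\vee) = 0$ by Corollary~\ref{17} (the case $k = 3 \neq 1$) and $\mathrm{Ext}^\bullet(\Sigma^\alpha\cU^\vee(3), \Sigma^{2,1}\cU^\vee) = 0$ for every $\alpha \le (2,1)$ by Lemma~\ref{19} (the case $k = 3 \neq 0$).

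For the second identity $\mathbb{R}_{\mathfrak{E},\mathfrak{E}(1)}(F) \cong F(2)[-4]$, I twist the first identity by $\cO(2)$ to get $\mathbb{L}_{\mathfrak{E},\mathfrak{E}(1)}(F(2)) = F[4]$ and invert it through Proposition~\ref{proposition:mutations}, which requires $F(2) \in {}^\perp\langle \mathfrak{E}, \mathfrak{E}(1)\rangle$, equivalently by Serre duality and~\eqref{eq:kx}, $\mathrm{Ext}^\bullet(\mathfrak{E}(4), F) = \mathrm{Ext}^\bullet(\mathfrak{E}(5), F) = 0$. These follow upon applying $\mathrm{Ext}^\bullet(\mathfrak{E}(k), -)$ to~\eqref{eq:def-f}: for $k = 4$ each element of $\mathfrak{E}(k)$ sits in $\mathfrak{E}'(k) \cup \mathfrak{E}'(k{+}1)$, so $\mathrm{Ext}^\bullet(\mathfrak{E}(4), T) = 0$ by Proposition~\ref{proposition:ec-t}; the lone surviving contribution $\mathrm{Ext}^{12}(\Sigma^{2,1}\cU^\vee(5), T) \cong \Bbbk$ at $k = 5$ from Lemma~\ref{lemma:tf-mutation} is matched by $\mathrm{Ext}^{12}(\Sigma^{2,1}\cU^\vee(5), \Sigma^{2,1}\cU^\vee(-1)) \cong \Bbbk$ via the isomorphism induced by Serre-dual precomposition with the quotient $T \to \Sigma^{2,1}\cU^\vee(-1)$, so it cancels in the long exact sequence; the remaining terms involving $\Sigma^{2,1}\cU^\vee(-1)$ vanish by Lemma~\ref{19}. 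The main obstacle is step (b) above; however once framed through $T \cong \mathbb{L}_\mathfrak{E}(\Sigma^{3,1}\cU^\vee)[-3]$, the computation is routine given Lemma~\ref{19} and Corollary~\ref{17}.
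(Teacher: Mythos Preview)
Your proof is correct and follows essentially the same route as the paper. The paper compresses your steps into the single chain
\[
\mathbb{L}_{\mathfrak{E}(-2),\mathfrak{E}(-1)}(F)\simeq\mathbb{R}_{\Sigma^{2,1}\cU^{\vee}(-3)}\mathbb{L}_{\mathfrak{E}'(-2),\mathfrak{E}'(-1)}\mathbb{L}_{\Sigma^{2,1}\cU^{\vee}(-1)}(F)\simeq\mathbb{R}_{\Sigma^{2,1}\cU^{\vee}(-3)}(T(-2)[4])\simeq F(-2)[4],
\]
and for the converse simply says ``analogous''. The first equality in that chain is exactly your decomposition $\langle\mathfrak{E}(-2),\mathfrak{E}(-1)\rangle=\langle\mathcal{C},\Sigma^{2,1}\cU^\vee(-1)\rangle$ together with $\langle\mathfrak{E}'(-2),\mathfrak{E}'(-1)\rangle=\langle\Sigma^{2,1}\cU^\vee(-3),\mathcal{C}\rangle$, and implicitly uses that $\mathbb{R}_{\Sigma^{2,1}\cU^\vee(-3)}\mathbb{L}_{\Sigma^{2,1}\cU^\vee(-3)}$ acts as the identity on $\mathbb{L}_\mathcal{C}(T)$; your verification of $\mathrm{Hom}^\bullet(\mathbb{L}_\mathcal{C}(T),\Sigma^{2,1}\cU^\vee(-3))=0$ via Lemma~\ref{9}(i), Corollary~\ref{17}, and Lemma~\ref{19} makes this step explicit and is a genuine addition of rigor over what the paper writes down. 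For the converse you invert the first identity rather than rerun the symmetric argument; this requires the extra check $F(2)\in{}^\perp\langle\mathfrak{E},\mathfrak{E}(1)\rangle$, which you carry out correctly (the cancellation of the two copies of $\Bbbk[-12]$ via Serre duality is right).
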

\begin{proof}
Using Lemma \ref{lemma:tf-mutation} and Proposition \ref{T} we have
\begin{multline*}
\mathbb{L}_{\mathfrak{E}(-2),\mathfrak{E}(-1)}(F)\simeq\mathbb{R}_{\Sigma^{2,1}\cU^{\vee}(-3)}\mathbb{L}_{\mathfrak{E}'(-2),\mathfrak{E}'(-1)}
\mathbb{L}_{\Sigma^{2,1}\cU^{\vee}(-1)}(F)\simeq\\
\simeq\mathbb{R}_{\Sigma^{2,1}\cU^{\vee}(-3)}\mathbb{L}_{\mathfrak{E}'(-2),\mathfrak{E}'(-1)}(T)\simeq
\mathbb{R}_{\Sigma^{2,1}\cU^{\vee}(-3)}(T(-2)[4])\simeq F(-2)[4].
\end{multline*}
The second statement is analogous.
\end{proof}

%
%
%

The main result of this section is the following.
\begin{proposition}
\label{proposition:ec}
The following two collections of vector bundles
\begin{align}
\label{pre1}
&\langle F, \fE, F(1), \fE(1), \fE(2), \fE(3), \fE(4), \fE(5) \rangle,\qquad\text{and}\\
\label{1}
&\langle F, \fE, \fE(1), \fE(2), F(3), \fE(3), \fE(4), \fE(5) \rangle
\end{align}
are exceptional.
They generate the same subcategory of $\Db(\IGr(3,V))$ as the collections~\eqref{preanother-collection} and~\eqref{another-collection}.
%
\end{proposition}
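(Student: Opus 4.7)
The plan is to derive the collections \eqref{pre1} and \eqref{1} from the corresponding collections \eqref{preanother-collection} and \eqref{another-collection} of Proposition~\ref{proposition:ec-t} by a sequence of mutations, each of which preserves both exceptionality and the generated triangulated subcategory. Since mutations of an exceptional collection remain exceptional (Proposition~\ref{proposition:mutations}), this will simultaneously prove that \eqref{pre1} and \eqref{1} are exceptional and that they span the same subcategory as their $T$-versions.

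The key observation is that $\Sigma^{2,1}\cU^\vee(i-1)$ is precisely the leading term of $\fE'(i)$ for every $i$, and by Lemma~\ref{lemma:tf-mutation} (applied to the twist) we have
\[
\RR_{\Sigma^{2,1}\cU^\vee(i-1)}(T(i)) \;\cong\; F(i).
\]
I would therefore perform, in \eqref{preanother-collection}, the right mutation of $T$ through $\Sigma^{2,1}\cU^\vee(-1)$ and of $T(1)$ through $\Sigma^{2,1}\cU^\vee$; and in \eqref{another-collection}, the right mutation of $T$ through $\Sigma^{2,1}\cU^\vee(-1)$ and of $T(3)$ through $\Sigma^{2,1}\cU^\vee(2)$. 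Each such mutation converts the appearance of $T(i)$ into $F(i)$, at the price of moving the relevant $\Sigma^{2,1}\cU^\vee(i-1)$ to the left of it, where it sits at the beginning of the preceding block.

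Next comes a purely combinatorial regrouping. After the above mutations the collection starts with $\Sigma^{2,1}\cU^\vee(-1)$ and interlocks $\fE'$-blocks (minus their leading $\Sigma^{2,1}\cU^\vee$-term) with the newly-appearing $F(i)$'s and with isolated $\Sigma^{2,1}\cU^\vee(j)$-terms originating from the heads of the subsequent $\fE'(j+1)$-blocks. Merging each $\Sigma^{2,1}\cU^\vee(j)$ with the $\fE$-like block preceding it rather than the $\fE'$-block following it produces exactly the Lefschetz shape of \eqref{pre1} (respectively \eqref{1}), except that a spurious $\Sigma^{2,1}\cU^\vee(-1)$ still leads the collection and the final $\fE(5)$-block is missing its tail $\Sigma^{2,1}\cU^\vee(5)$.

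The final step invokes Proposition~\ref{proposition:long-mutations} together with the identity $\omega_X \cong \cO(-6)$ from~\eqref{eq:kx}: moving the leading $\Sigma^{2,1}\cU^\vee(-1)$ to the end of the collection produces $\Sigma^{2,1}\cU^\vee(-1)\otimes\omega_X^{-1} = \Sigma^{2,1}\cU^\vee(5)$, which is precisely the missing tail of the last $\fE(5)$-block. The result is \eqref{pre1} (respectively \eqref{1}). There is no conceptual obstacle; the only delicate point is the combinatorial book-keeping in the regrouping step, checking that every $\Sigma^{2,1}\cU^\vee$-twist lands in exactly the right block. Exceptionality and equality of the generated subcategories with those of \eqref{preanother-collection} and \eqref{another-collection} then follow automatically since every operation employed (right mutation through an exceptional object and the Serre-duality-type long mutation of Proposition~\ref{proposition:long-mutations}) has these two properties.
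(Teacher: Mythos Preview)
Your proposal is correct and follows essentially the same approach as the paper: mutate each $T(i)$ to the right of the leading term $\Sigma^{2,1}\cU^\vee(i-1)$ of $\fE'(i)$ via Lemma~\ref{lemma:tf-mutation} to obtain $F(i)$, regroup the $\Sigma^{2,1}\cU^\vee$-terms to turn $\fE'$-blocks into $\fE$-blocks, and finally use Proposition~\ref{proposition:long-mutations} with $\omega_X\cong\cO(-6)$ to send the leftover $\Sigma^{2,1}\cU^\vee(-1)$ to the end as $\Sigma^{2,1}\cU^\vee(5)$. The paper's proof is terser (it treats only the collection~\eqref{1} and declares~\eqref{pre1} analogous), but the argument is identical.
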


As before, \eqref{pre1} is Lefshetz with respect to $\cO(1)$, but not rectangular,
while~\eqref{1} is rectangular with respect to $\cO(3)$.

\begin{proof}
We prove exceptionality of~\eqref{1}, the case of~\eqref{pre1} is analogous.
Consider~\eqref{another-collection} and
mutate $T$ to the right of $\Sigma^{2,1}\cU^\vee(-1) \in \fE'$ and~$T(3)$ to the right of $\Sigma^{2,1}\cU^\vee(2) \in \fE'(3)$.
By Lemma~\ref{lemma:tf-mutation} we get the objects $F$ and $F(3)$.
Finally, using Proposition~\ref{proposition:long-mutations} replace the starting object $\Sigma^{2,1}\cU^\vee(-1)$ by $\Sigma^{2,1}\cU^\vee(5)$,
to get the required exceptional collection.
\end{proof}

\section{Fullness}
\label{section:fullness}

In this section we prove that the exceptional collections constructed in Propositions~\ref{proposition:ec-t} and~\ref{proposition:ec} are full.
Recall that all these collections generate the same subcategory of $\Db(\IGr(3,V))$, which we denote by~$\cD$.
Thus
\begin{equation}
\label{eq:cd}
\cD = \langle F, \fE, \fE(1), \fE(2), F(3), \fE(3), \fE(4), \fE(5) \rangle \subset \Db(\IGr(3,V))
\end{equation}
and we aim to prove that $\cD = \Db(\IGr(3,V))$.

\subsection{Adding some objects.}

We begin with some preparations.

\begin{lemma}
\label{3}
We have $\Lambda^2(V/\cU) \in \langle \Sigma^{2,1}\cU^{\vee}(-1), \mathfrak{E} \rangle$.
\end{lemma}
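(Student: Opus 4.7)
The plan is to produce $\Lambda^2(V/\cU)$ by iterated extensions from a filtration coming from the symplectic decomposition of $V/\cU$, and to handle the one genuinely nontrivial graded piece by means of the monad presentation of $\cS$ from~\eqref{eq:upu-monad}.

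The first step is to note that because $\cU$ is isotropic, the symplectic form on $V$ identifies $V/\cU^\perp \cong \cU^\vee$; combined with the definition $\cS = \cU^\perp/\cU$, this gives a short exact sequence
\begin{equation*}
0 \to \cS \to V/\cU \to \cU^\vee \to 0.
\end{equation*}
Applying $\Lambda^2$ to this sequence produces a two-step filtration on $\Lambda^2(V/\cU)$ whose associated graded is
\begin{equation*}
\Lambda^2\cS, \qquad \cS \otimes \cU^\vee, \qquad \Lambda^2\cU^\vee.
\end{equation*}
Since $\cS$ is a rank-$2$ symplectic bundle, its symplectic form trivialises $\Lambda^2\cS \cong \cO$; and $\Lambda^2\cU^\vee$ already sits in $\fE$. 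Hence, granted the claim that $\cS \otimes \cU^\vee \in \langle \Sigma^{2,1}\cU^\vee(-1), \fE \rangle$, both short exact sequences of the filtration keep us inside this triangulated subcategory, proving the lemma.

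To establish the claim, I would tensor the monad presentation $\cS \cong \{\cU \to V \otimes \cO \to \cU^\vee\}$ of~\eqref{eq:upu-monad} with $\cU^\vee$, obtaining
\begin{equation*}
\cS \otimes \cU^\vee \;\cong\; \bigl\{\, \cU \otimes \cU^\vee \;\to\; V \otimes \cU^\vee \;\to\; \cU^\vee \otimes \cU^\vee \,\bigr\}
\end{equation*}
in $\Db(X)$, and then decompose the outer terms: Pieri gives $\cU \otimes \cU^\vee \cong \cO \oplus \Sigma^{2,1}\cU^\vee(-1)$ (using $\cU \cong \Lambda^2\cU^\vee(-1)$) and $\cU^\vee \otimes \cU^\vee \cong S^2\cU^\vee \oplus \Lambda^2\cU^\vee$. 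Every summand now appears either in $\fE$ or equals $\Sigma^{2,1}\cU^\vee(-1)$, and the middle term $V \otimes \cU^\vee$ lies in $\langle \cU^\vee \rangle \subset \langle \fE \rangle$, so the whole three-term complex, and therefore $\cS \otimes \cU^\vee$, belongs to $\langle \Sigma^{2,1}\cU^\vee(-1), \fE \rangle$.

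I do not anticipate a real obstacle here: the proof is essentially bookkeeping, and the only conceptual point is to recognise \emph{where} the extra generator $\Sigma^{2,1}\cU^\vee(-1)$ is actually needed, namely as the non-scalar summand of $\cU \otimes \cU^\vee$ entering the middle-cohomology piece $\cS \otimes \cU^\vee$ of $\Lambda^2(V/\cU)$; everything else is supplied directly by $\fE$.
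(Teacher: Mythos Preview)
Your argument is correct and is a genuinely different route from the paper's. The paper instead forms the commutative square
\[
\xymatrix{
\Lambda^2\cU^{\perp}\ar@{^{(}->}[r]\ar@{->>}[d]& \Lambda^2V\ar@{->>}[d]  \\
\Lambda^2\cS\ar@{^{(}->}[r]& \Lambda^2(V/\cU)
}
\]
and reads off from the associated three-term complex that $\Lambda^2(V/\cU)$ differs from $\Lambda^2\cU^\perp$, $\Lambda^2 V\otimes\cO$, and $\Lambda^2\cS\cong\cO$ only by the middle cohomology $\cU\otimes\cU^\vee\cong\cO\oplus\Sigma^{2,1}\cU^\vee(-1)$; it then invokes the Koszul complex~\eqref{15} for $k=2$ to place $\Lambda^2\cU^\perp$ in $\langle\fE\rangle$. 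Your approach bypasses $\Lambda^2\cU^\perp$ entirely by filtering $\Lambda^2(V/\cU)$ via the sequence $0\to\cS\to V/\cU\to\cU^\vee\to 0$ and then resolving the middle graded piece $\cS\otimes\cU^\vee$ with the monad~\eqref{eq:upu-monad}. Both arguments isolate the same essential term $\cU\otimes\cU^\vee$ as the source of $\Sigma^{2,1}\cU^\vee(-1)$; yours is slightly more direct and avoids the detour through the Koszul resolution of $\Lambda^2\cU^\perp$, while the paper's square makes the comparison with $\Lambda^2 V$ and $\Lambda^2\cU^\perp$ more explicit.
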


\begin{proof}
First, from the complex~\eqref{15} for $k = 2$ it follows that
\begin{equation*}
\Lambda^2\cU^\perp \in \langle \mathfrak{E} \rangle.
\end{equation*}
Further, consider the commutative square
\begin{equation} \label{33}
\vcenter{\xymatrix{
\Lambda^2\cU^{\perp}\ar@{^{(}->}[r]\ar@{->>}[d]& \Lambda^2V\ar@{->>}[d]  \\
\Lambda^2\cS\ar@{^{(}->}[r]& \Lambda^2(V/\cU)
}}
\end{equation}
It gives a complex of the form
\begin{equation}
\label{eq:complex}
\Lambda^2\cU^{\perp} \to \Lambda^2V \oplus \Lambda^2\cS \to \Lambda^2(V/\cU),
\end{equation}
and it is clear that its only cohomology sheaf sits in the middle term.
From the spectral sequence of~\eqref{33} starting with horizontal arrows we conclude
that this cohomology sheaf is isomorphic to
\begin{equation*}
\operatorname{Ker}(\cU^\vee \otimes \cU^\perp \oplus \Lambda^2\cU^\vee \twoheadrightarrow \cU^\vee \otimes \cS \oplus  \Lambda^2\cU^\vee)
\cong \cU^\vee \otimes \cU \cong \Sigma^{2,1}\cU^\vee(-1) \oplus \cO,
\end{equation*}
hence it is contained in $\langle \Sigma^{2,1}\cU^{\vee}(-1), \mathfrak{E} \rangle$.
Finally, note that $\Lambda^2\cS \cong \cO$.
Combining all this we deduce that~$\Lambda^2(V/\cU) \in \langle \Sigma^{2,1}\cU^{\vee}(-1), \mathfrak{E} \rangle$.
\end{proof}

\begin{proposition} \label{2}
We have $\Sigma^{2,2}\cU^{\vee}\in \langle \mathfrak{E}(1), \mathfrak{E}(2) \rangle$.
In particular, $\Sigma^{2,2}\cU^{\vee}(k) \in \cD$ for $-1 \le k \le 3$.
\end{proposition}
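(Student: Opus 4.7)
The plan is to realize $\Sigma^{2,2}\cU^\vee$ as the leftmost term of a four-term exact sequence whose other three terms all lie in $\langle \mathfrak{E}(1),\mathfrak{E}(2)\rangle$, invoking Lemma~\ref{3} to handle the one term that is not immediately of that form. The starting observation is the identification
\begin{equation*}
\Sigma^{2,2}\cU^\vee \;\cong\; S^2\cU \otimes \cO(2),
\end{equation*}
which follows from $\Lambda^2\cU^\vee \cong \cU \otimes \cO(1)$ together with the twist rule $\Sigma^{\alpha_1+t,\alpha_2+t,\alpha_3+t}\cU^\vee \cong \Sigma^{\alpha_1,\alpha_2,\alpha_3}\cU^\vee \otimes \cO(t)$ recorded at the beginning of Section~\ref{section:db}.

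Next I will dualize the Koszul complex~\eqref{15} for $k=2$,
\begin{equation*}
0 \to \Lambda^2\cU^\perp \to \Lambda^2V \otimes \cO \to V \otimes \cU^\vee \to S^2\cU^\vee \to 0,
\end{equation*}
using the self-duality $V \cong V^\vee$ afforded by the symplectic form $\omega$ and the identification $(\cU^\perp)^\vee \cong V/\cU$, to obtain the exact sequence
\begin{equation*}
0 \to S^2\cU \to V \otimes \cU \to \Lambda^2V \otimes \cO \to \Lambda^2(V/\cU) \to 0.
\end{equation*}
Twisting by $\cO(2)$ and substituting $\cU(2) \cong \Lambda^2\cU^\vee(1)$ produces
\begin{equation*}
0 \to \Sigma^{2,2}\cU^\vee \to V \otimes \Lambda^2\cU^\vee(1) \to \Lambda^2V \otimes \cO(2) \to \Lambda^2(V/\cU)(2) \to 0.
\end{equation*}

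Finally, I will observe that each nontrivial term to the right of $\Sigma^{2,2}\cU^\vee$ lies in $\langle \mathfrak{E}(1),\mathfrak{E}(2)\rangle$: the bundle $\Lambda^2\cU^\vee(1)$ belongs to $\mathfrak{E}(1)$, the bundle $\cO(2)$ belongs to $\mathfrak{E}(2)$, and Lemma~\ref{3} twisted by $\cO(2)$ yields $\Lambda^2(V/\cU)(2) \in \langle \Sigma^{2,1}\cU^\vee(1), \mathfrak{E}(2)\rangle \subset \langle \mathfrak{E}(1),\mathfrak{E}(2)\rangle$. Since $\Sigma^{2,2}\cU^\vee$ is quasi-isomorphic to the shift of the three-term complex formed by these bundles, it lies in the same subcategory. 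For the \emph{in particular} assertion, twisting by $\cO(k)$ gives $\Sigma^{2,2}\cU^\vee(k) \in \langle \mathfrak{E}(k+1),\mathfrak{E}(k+2)\rangle$, and for each $k \in \{-1,0,1,2,3\}$ both blocks appear in the decomposition~\eqref{eq:cd} of $\cD$. The only substantive input beyond standard Koszul theory is Lemma~\ref{3}; I do not anticipate a significant obstacle beyond the bookkeeping needed to apply the twist rule and dualization correctly.
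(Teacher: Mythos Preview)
Your proof is correct and follows essentially the same route as the paper: both identify $\Sigma^{2,2}\cU^\vee \cong S^2\cU(2)$, dualize the Koszul complex~\eqref{15} for $k=2$ and twist by $\cO(2)$ to obtain the four-term sequence, then invoke Lemma~\ref{3} for the rightmost term. The only difference is that you spell out the bookkeeping (the dualization, the twist $\cU(2)\cong\Lambda^2\cU^\vee(1)$, and the case-check for the range of $k$) more explicitly than the paper does.
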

\begin{proof}
Note that $\Sigma^{2,2}\cU^{\vee}=S^{2}\cU(2)$.
The dual of~\eqref{15} twisted by $\cO(2)$ gives an exact sequence
\begin{equation*}
0\to S^2\cU(2)\to V\otimes\Lambda^2\cU^{\vee}(1)\to \Lambda^2V\otimes \mathcal{O}(2)\to \Lambda^2(V/\cU)(2)\to 0.
\end{equation*}
Using Lemma~\ref{3} we deduce $\Sigma^{2,2}\cU^{\vee}\in \langle \mathfrak{E}(1), \mathfrak{E}(2) \rangle$.
The second claim follows from the definition of $\cD$.
\end{proof}

\begin{proposition} \label{10}
We have $\Sigma^{3,3}\cU^{\vee}\subset \langle \mathfrak{E}(1),\mathfrak{E}(2),\mathfrak{E}(3),\mathfrak{E}(4) \rangle$.
In particular, $\Sigma^{3,3}\cU^{\vee}(k) \in \cD$ for~$-1 \le k \le 1$.
\end{proposition}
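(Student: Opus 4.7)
The plan is to imitate the proof of Proposition~\ref{2} but one degree higher. Using $\Sigma^{3,3}\cU^{\vee}\cong S^3\cU(3)$ and the dual of the Koszul complex~\eqref{15} for $k=3$, twisted by $\cO(3)$, I would write down the $5$-term exact sequence
\begin{equation*}
0\to \Sigma^{3,3}\cU^{\vee}\to V\otimes \Sigma^{2,2}\cU^{\vee}(1)\to \Lambda^{2}V\otimes \Lambda^{2}\cU^{\vee}(2)\to \Lambda^{3}V\otimes \cO(3)\to \Lambda^{3}(V/\cU)(3)\to 0,
\end{equation*}
where I have used the identifications $V\cong V^{\vee}$ (via $\omega$) and $(\cU^{\perp})^{\vee}\cong V/\cU$. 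It will then suffice to show that the four right-hand terms lie in $\langle \fE(1),\fE(2),\fE(3),\fE(4)\rangle$, since an exact sequence realizes the leftmost term as a shifted complex in the subcategory generated by the others. Proposition~\ref{2} (twisted by $\cO(1)$) puts $\Sigma^{2,2}\cU^{\vee}(1)\in\langle\fE(2),\fE(3)\rangle$, and obviously $\Lambda^{2}\cU^{\vee}(2)\in\fE(2)$ and $\cO(3)\in\fE(3)$, so three of the four terms are immediate.

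The remaining task, and the one requiring real input, is to show $\Lambda^{3}(V/\cU)(3)\in\langle \fE(1),\ldots,\fE(4)\rangle$. I would use the short exact sequence $0\to \cS\to V/\cU\to \cU^{\vee}\to 0$: since $\cS$ has rank $2$, the bundle $\Lambda^{3}(V/\cU)$ carries a three-step filtration with graded factors
\begin{equation*}
\Lambda^{3}\cU^{\vee}\cong \cO(1),\qquad \cS\otimes \Lambda^{2}\cU^{\vee},\qquad \Lambda^{2}\cS\otimes \cU^{\vee}\cong \cU^{\vee},
\end{equation*}
the last isomorphism using the symplectic trivialization $\Lambda^{2}\cS\cong\cO$. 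Twisting by $\cO(3)$ the extremal pieces $\cO(4)\in\fE(4)$ and $\cU^{\vee}(3)\in\fE(3)$ land obviously in the desired category.

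The genuine obstacle is the middle piece $\cS\otimes \Lambda^{2}\cU^{\vee}(3)$, which involves the non-semisimple symplectic bundle $\cS$. For this I would invoke the monad presentation~\eqref{eq:upu-monad}, rewriting
\begin{equation*}
\cS\otimes \Lambda^{2}\cU^{\vee}(3)\cong \Big\{ \cU\otimes \Lambda^{2}\cU^{\vee}(3)\to V\otimes \Lambda^{2}\cU^{\vee}(3)\to \cU^{\vee}\otimes \Lambda^{2}\cU^{\vee}(3)\Big\},
\end{equation*}
and decomposing each term by Littlewood--Richardson: $\cU\otimes \Lambda^{2}\cU^{\vee}\cong \cU^{\vee}\oplus S^{2}\cU^{\vee}(-1)$ (already used in the proof of Lemma~\ref{lemma:lambda-injective}) and $\cU^{\vee}\otimes \Lambda^{2}\cU^{\vee}\cong \Sigma^{2,1}\cU^{\vee}\oplus \cO(1)$. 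After twisting each of the resulting summands $\cU^{\vee}(3)$, $S^{2}\cU^{\vee}(2)$, $\Lambda^{2}\cU^{\vee}(3)$, $\Sigma^{2,1}\cU^{\vee}(3)$, $\cO(4)$ sits in some $\fE(i)$ with $2\le i\le 4$, so the monad places $\cS\otimes \Lambda^{2}\cU^{\vee}(3)\in\langle \fE(2),\fE(3),\fE(4)\rangle$. Combining the filtration with the monad and then with the dual Koszul sequence gives the main claim. Finally, the ``in particular'' statement for $k\in\{-1,0,1\}$ follows by twisting: $\Sigma^{3,3}\cU^{\vee}(k)\in \langle \fE(k+1),\ldots,\fE(k+4)\rangle$, and in all three cases this block range lies within the definition~\eqref{eq:cd} of $\cD$.
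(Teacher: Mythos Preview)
Your overall strategy is sound and ultimately works, but there is a slip in the Littlewood--Richardson decomposition: the isomorphism used in the proof of Lemma~\ref{lemma:lambda-injective} is
\[
\cU\otimes\Lambda^{2}\cU^{\vee}\ \cong\ \cU^{\vee}\ \oplus\ S^{2}\cU(1)\ \cong\ \cU^{\vee}\ \oplus\ \Sigma^{2,2}\cU^{\vee}(-1),
\]
not $\cU^{\vee}\oplus S^{2}\cU^{\vee}(-1)$ as you wrote. After the twist by $\cO(3)$ the relevant summand is therefore $\Sigma^{2,2}\cU^{\vee}(2)$, which is \emph{not} in any single block~$\fE(i)$. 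Fortunately this is harmless: by Proposition~\ref{2} (twisted by~$2$) one has $\Sigma^{2,2}\cU^{\vee}(2)\in\langle\fE(3),\fE(4)\rangle$, so the monad still places $\cS\otimes\Lambda^{2}\cU^{\vee}(3)$ inside $\langle\fE(2),\fE(3),\fE(4)\rangle$ and your argument goes through once this correction is made.

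For comparison, the paper's proof is shorter: it simply twists the staircase complex~\eqref{29} by~$\cO(4)$, obtaining
\[
0\to \Sigma^{3,3}\cU^{\vee}\to V\otimes\Sigma^{2,2}\cU^{\vee}(1)\to \Lambda^{2}V\otimes\Lambda^{2}\cU^{\vee}(2)\to \Lambda^{3}V\otimes\cO(3)\to \Lambda^{2}V\otimes\cO(4)\to V\otimes\cU^{\vee}(4)\to S^{2}\cU^{\vee}(4)\to 0,
\]
whose right-hand terms all lie in $\langle\fE(2),\fE(3),\fE(4)\rangle$ (again using Proposition~\ref{2} for $\Sigma^{2,2}\cU^{\vee}(1)$). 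Your dual Koszul sequence is in fact the left half of this staircase with the tail rolled up into $\Lambda^{3}(V/\cU)(3)$; the staircase has already ``unfolded'' that term into bundles from $\fE(4)$, so the filtration-plus-monad analysis you carry out is not needed. Your route is correct but takes the scenic path.
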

\begin{proof}
The first claim follows from the staircase complex~\eqref{29} and Proposition~\ref{2} and
the second claim follows from the definition of $\cD$.
\end{proof}

Note that the vector bundles from the previous propositions lie in the rectangular part of \eqref{1}.

\begin{proposition} \label{prop:F}
We have $F(k) \in \cD$ for $0 \le k \le 5$.
\end{proposition}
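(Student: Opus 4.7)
The key tool for this will be Corollary~\ref{FF}, whose two mutation identities, after twisting by an arbitrary $\cO(k)$, take the form
\begin{equation*}
\mathbb{L}_{\mathfrak{E}(k-2),\mathfrak{E}(k-1)}(F(k))\cong F(k-2)[4]
\qquad\text{and}\qquad
\mathbb{R}_{\mathfrak{E}(k),\mathfrak{E}(k+1)}(F(k))\cong F(k+2)[-4].
\end{equation*}
Each such identity exhibits a distinguished triangle relating $F(k)$ and $F(k\pm 2)$ whose third vertex lies in $\langle \fE(i),\fE(i+1)\rangle$ for the appropriate $i$. Since by definition~\eqref{eq:cd} the subcategory $\cD$ contains all six blocks $\fE, \fE(1), \ldots, \fE(5)$ as well as $F$ and $F(3)$, these triangles let us propagate membership in $\cD$ along the two arithmetic progressions $\{0,2,4\}$ and $\{1,3,5\}$.

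The plan is to walk through the values $k=0,\dots,5$ in the following order. The cases $k=0$ and $k=3$ are immediate, as $F$ and $F(3)$ sit in $\cD$ by construction. For $k=2$ I will apply the right-mutation identity to $F$: the triangle given by $\mathbb{R}_{\fE,\fE(1)}(F)\cong F(2)[-4]$ shows $F(2) \in \langle F,\fE,\fE(1)\rangle \subset \cD$. For $k=5$ I will apply the right-mutation identity to $F(3)$: the triangle given by $\mathbb{R}_{\fE(3),\fE(4)}(F(3))\cong F(5)[-4]$ shows $F(5) \in \langle F(3),\fE(3),\fE(4)\rangle \subset \cD$. For $k=1$ I will invoke the left-mutation identity at $k=3$, namely $\mathbb{L}_{\fE(1),\fE(2)}(F(3))\cong F(1)[4]$, which places $F(1)$ inside $\langle \fE(1),\fE(2),F(3)\rangle\subset\cD$. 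Finally, for $k=4$, I will use the left-mutation identity $\mathbb{L}_{\fE(2),\fE(3)}(F(4))\cong F(2)[4]$: the resulting triangle shows $F(4) \in \langle \fE(2),\fE(3),F(2)\rangle$, which is contained in $\cD$ since we have already verified that $F(2)\in\cD$.

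There is no real obstacle here beyond bookkeeping: the entire argument is a short combinatorial deduction from Corollary~\ref{FF} together with the explicit description of $\cD$. The only point requiring minor care is the order in which the six cases are treated, since the derivation of $F(4)$ uses $F(2)$, so $F(2)$ must be established first; the order above respects this dependency.
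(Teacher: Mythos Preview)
Your proof is correct and follows essentially the same approach as the paper: both arguments use Corollary~\ref{FF} to express each $F(k)$ in terms of $F(k\pm 2)$ and the adjacent blocks $\fE(i)$, treating the cases in the same order and handling $F(4)$ last via the previously established inclusion $F(2)\in\cD$.
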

\begin{proof}
The vector bundles $F$ and $F(3)$ lie in $\cD$ by~\eqref{eq:cd}.
By Corollary~\ref{FF} we have
\begin{equation*}
F(2)\subset\langle F,\mathfrak{E},\mathfrak{E}(1)\rangle \subset \cD,\quad
F(1)\subset\langle \mathfrak{E}(1),\mathfrak{E}(2), F(3) \rangle \subset \cD,
\quad\text{and}\quad
F(5)\subset\langle F(3),\mathfrak{E}(3),\mathfrak{E}(4)\rangle \subset \cD.
\end{equation*}
Furthermore, using Corollary~\ref{FF} again we deduce that
\begin{equation*}
F(4)\subset\langle F(2),\mathfrak{E}(2),\mathfrak{E}(3)\rangle \subset
\langle F,\mathfrak{E},\mathfrak{E}(1), \mathfrak{E}(2),\mathfrak{E}(3)\rangle \subset \cD.
\end{equation*}
This finishes the proof.
\end{proof}

\begin{corollary} \label{4}
We have $\Sigma^{3,2}\cU^{\vee}(k) \in \cD$ for $-1 \le k \le 2$ and
$\Sigma^{3,1}\cU^{\vee}(k) \in \cD$ for $1 \le k \le 5$.
\end{corollary}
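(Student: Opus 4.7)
The plan is to combine the bicomplex~\eqref{eq:first-left} with the description of $T$ via $F$ from~\eqref{eq:def-f} and with the left-mutation formula of Lemma~\ref{9}(i), after two preparatory steps. First, I would extend Proposition~\ref{10} to show $\Sigma^{3,3}\cU^{\vee}(-2) \in \cD$: twisting the staircase~\eqref{29} by $\cO(2)$ gives an exact sequence in which the other terms $V\otimes\Sigma^{2,2}\cU^{\vee}(-1)$, $\Lambda^{2}V\otimes\Lambda^{2}\cU^{\vee}$, $\Lambda^{3}V\otimes\cO(1)$, $\Lambda^{2}V\otimes\cO(2)$, $V\otimes\cU^{\vee}(2)$, $S^{2}\cU^{\vee}(2)$ lie in $\cD$ by Proposition~\ref{2} and by the definition of $\cD$, so the same argument as in Proposition~\ref{10} applies. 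Second, I would show $T(k) \in \cD$ for each $0 \le k \le 5$: for $k=0$ this is immediate from~\eqref{eq:cd}; for $1 \le k \le 5$, twisting~\eqref{eq:def-f} by $\cO(k)$ yields the short exact sequence $0 \to F(k) \to T(k) \to \Sigma^{2,1}\cU^{\vee}(k-1) \to 0$ in which $F(k) \in \cD$ by Proposition~\ref{prop:F} and $\Sigma^{2,1}\cU^{\vee}(k-1) \in \fE(k-1) \subset \cD$, so $T(k)\in\cD$.

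For the $\Sigma^{3,1}\cU^{\vee}(k)$ portion, the left-mutation triangle of Lemma~\ref{9}(i) gives $\Sigma^{3,1}\cU^{\vee} \in \langle \fE, T \rangle$, so twisting by $\cO(k)$ yields $\Sigma^{3,1}\cU^{\vee}(k) \in \langle \fE(k), T(k)\rangle \subset \cD$ for each $1 \le k \le 5$.

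For $\Sigma^{3,2}\cU^{\vee}(k)$ with $-1 \le k \le 2$, I would use the bicomplex~\eqref{eq:first-left} twisted by $\cO(k+3)$: its total complex is quasi-isomorphic to $T(k+3) \in \cD$, and $\Sigma^{3,2}\cU^{\vee}(k)$ occurs in total degree $-2$ as a direct summand alongside $\Lambda^{2}V\otimes\Lambda^{2}\cU^{\vee}(k+1)$. Every other term of the twisted bicomplex lies in $\cD$: the rectangular terms lie in $\fE(k+1)$ or $\fE(k+2)$; the term $V\otimes\Sigma^{2,2}\cU^{\vee}(k)$ is covered by Proposition~\ref{2}; and $\Sigma^{3,3}\cU^{\vee}(k-1)$ is covered by Proposition~\ref{10} for $k\in\{0,1,2\}$ and by the first preparatory step for $k=-1$. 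Iterating the triangles coming from the stupid truncations of the total complex, and finally peeling off the known summand $\Lambda^{2}V\otimes\Lambda^{2}\cU^{\vee}(k+1)$ by one more triangle, gives $\Sigma^{3,2}\cU^{\vee}(k) \in \cD$.

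The main obstacle is the single extra membership $\Sigma^{3,3}\cU^{\vee}(-2) \in \cD$ needed for the case $k=-1$; once that is available, the rest of the argument is routine bookkeeping on the terms of the bicomplex and standard triangulated manipulations.
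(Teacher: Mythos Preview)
Your argument is correct, and for the $\Sigma^{3,1}\cU^{\vee}$ part it is essentially identical to the paper's: the paper phrases it as $\Sigma^{3,1}\cU^{\vee}(k)\in\langle \Sigma^{2,1}\cU^{\vee}(k-1),F(k),\fE(k)\rangle$, which is exactly your two-step argument (first $T(k)\in\cD$ via~\eqref{eq:def-f}, then $\Sigma^{3,1}\cU^{\vee}(k)\in\langle\fE(k),T(k)\rangle$) collapsed into one line.

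For the $\Sigma^{3,2}\cU^{\vee}$ part you take a genuinely different and longer route. The paper simply uses the single staircase complex~\eqref{34}: twisted by $\cO(k+3)$ it has $\Sigma^{3,2}\cU^{\vee}(k)$ on the left and $\Sigma^{3,1}\cU^{\vee}(k+3)$ on the right, and for $-1\le k\le 2$ every intermediate term lies in some $\fE(k+1),\dots,\fE(k+3)$ while the rightmost term is already in $\cD$ by the first half. This is a one-line deduction and needs nothing beyond what is already proved. Your approach instead runs the whole two-row bicomplex~\eqref{eq:first-left}, which drags in $\Sigma^{3,3}\cU^{\vee}(k-1)$ and $\Sigma^{2,2}\cU^{\vee}(k)$; that forces you to extend Proposition~\ref{10} by the extra step $\Sigma^{3,3}\cU^{\vee}(-2)\in\cD$ to handle $k=-1$. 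Your extension is valid, but it is work that the paper's route avoids entirely. In short: you reach for the bicomplex where the single exact sequence~\eqref{34} already suffices.
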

\begin{proof}
By Lemma~\ref{9}(i) and~\eqref{eq:def-f} we have
$\Sigma^{3,1}\cU^{\vee}(k)\subset \langle \Sigma^{2,1}\cU^{\vee}(k-1),F(k),\mathfrak{E}(k) \rangle$.
So, using Proposition~\ref{prop:F}, we immediately deduce that $\Sigma^{3,1}\cU^{\vee}(k) \in \cD$ for $1 \le k \le 5$.


Now from~\eqref{34} we see that $\Sigma^{3,2}\cU^{\vee}(k) \in \cD$ for $-1 \le k \le 2$.
\end{proof}

\subsection{Fullness}
%

Consider the following diagram (with obvious projections):
\begin{equation*}
\xymatrix{
&\mathrm{IFl}(2,3;V)\ar[dl]^p \ar[dr]_q\\
\mathrm{IGr}(2,V) && \mathrm{IGr}(3,V),
}
\end{equation*}
where $\mathrm{IFl}(2,3,V)$ is the isotropic flag variety.
The projections $p$ and $q$ turn $\mathrm{IFl}(2,3,V)$ into a $\mathbb{P}^3$-bundle over $\mathrm{IGr}(2,V)$
and a $\mathbb{P}^2$-bundle over $\mathrm{IGr}(3,V)$ respectively.
Let us denote by $\cU_3$ and $\cU_2$ the tautological bundles on $\mathrm{IGr}(3,V)$ and $\mathrm{IGr}(2,V)$ respectively.
Then
\begin{equation*}
\cO(H_3)=q^*\Lambda^3\cU_3^{\vee}
\end{equation*}
is the Grothendieck invertible sheaf for the $\mathbb{P}^3$-bundle over $\mathrm{IGr}(2,V)$, and
\begin{equation*}
\cO(H_2)=p^*\Lambda^2\cU_2^{\vee}
\end{equation*}
is the Grothendieck invertible sheaf for the $\mathbb{P}^2$-bundle over $\mathrm{IGr}(3,V)$.

For each $i \in \mathbb{Z}$ denote
\begin{equation*}
\mathrm{D}_i := p^*(\mathrm{D}^{b}(\mathrm{IGr}(2,V)))\otimes \mathcal{O}(iH_3) \subset \mathrm{D}^{b}(\mathrm{IFl}(2,3,V)).
\end{equation*}
By Theorem~\ref{Orlov} applied to the projection~$p$,
there is a semiorthogonal decomposition
\begin{equation*}
\mathrm{D}^{b}(\mathrm{IFl}(2,3,V))=\langle \mathrm{D}_3, \mathrm{D}_4, \mathrm{D}_5, \mathrm{D}_6  \rangle.
\end{equation*}
Next, we choose a convenient full exceptional collection in each of the components above.
Recall from Example~\eqref{14}(iii) that $\mathrm{D}^{b}(\mathrm{IGr}(2,V))$ has a full Lefschetz exceptional collection of length $24$
with the first block equal to $\cO,\cU^{\vee},S^2\cU^{\vee},S^3\cU^{\vee}$ and the support partition $(4,4,4,3,3,3,3)$.
We twist this exceptional collection by $\cO(-5)$ and denote by $\fG$ the resulting full exceptional collection
\begin{equation} \label{21}
\fG=
\left(\begin{array}{rrrrrrr}
S^3\mathcal{U}_2^{\vee}(-5) & S^3\mathcal{U}_2^{\vee}(-4) & S^3\mathcal{U}_2^{\vee}(-3) &&& \\
S^2\mathcal{U}_2^{\vee}(-5)& S^2\mathcal{U}_2^{\vee}(-4) & S^2\mathcal{U}_2^{\vee}(-3)&
S^2\mathcal{U}_2^{\vee}(-2)& S^2\mathcal{U}_2^{\vee}(-1)& S^2\mathcal{U}_2^{\vee} & S^2\mathcal{U}_2^{\vee}(1) \\
\mathcal{U}_2^{\vee}(-5)&\mathcal{U}_2^{\vee}(-4) & \mathcal{U}_2^{\vee}(-3) &
\mathcal{U}_2^{\vee}(-2)& \mathcal{U}_2^{\vee}(-1)& \mathcal{U}_2^{\vee} & \mathcal{U}_2^{\vee}(1) \\
\mathcal{O}(-5)& \mathcal{O}(-4)& \mathcal{O}(-3) &
\mathcal{O}(-2) & \mathcal{O}(-1) & \mathcal{O} & \mathcal{O}(1)
\end{array}\right).
\end{equation}
Twisting this collection by $\cO(-1)$ and applying Proposition~\ref{proposition:long-mutations} to its first object,
we see that
\begin{equation} \label{21prime}
\fG' =
\left(\begin{array}{rrrrrrrr}
S^3\mathcal{U}_2^{\vee}(-6) & S^3\mathcal{U}_2^{\vee}(-5) & S^3\mathcal{U}_2^{\vee}(-4) &&& \\
S^2\mathcal{U}_2^{\vee}(-6)& S^2\mathcal{U}_2^{\vee}(-5) & S^2\mathcal{U}_2^{\vee}(-4)&
S^2\mathcal{U}_2^{\vee}(-3)& S^2\mathcal{U}_2^{\vee}(-2)& S^2\mathcal{U}_2^{\vee}(-1) & S^2\mathcal{U}_2^{\vee} \\
\mathcal{U}_2^{\vee}(-6)&\mathcal{U}_2^{\vee}(-5) & \mathcal{U}_2^{\vee}(-4) &
\mathcal{U}_2^{\vee}(-3)& \mathcal{U}_2^{\vee}(-2)& \mathcal{U}_2^{\vee}(-1) & \mathcal{U}_2^{\vee} \\
& \mathcal{O}(-5)& \mathcal{O}(-4) &
\mathcal{O}(-3) & \mathcal{O}(-2) & \mathcal{O}(-1) & \mathcal{O} & \ \mathcal{O}(1)
\end{array}\right).
\end{equation}
is also a full exceptional collection in $\mathrm{D}^{b}(\mathrm{IGr}(2,V))$.


Now, we consider the following exceptional collection:
\begin{multline} \label{22}
\mathrm{D}^{b}(\mathrm{IFl}(2,3,V))=
\langle \mathrm{D}_3, \mathrm{D}_4, \mathrm{D}_5, \mathrm{D}_6  \rangle
= \\ =
\Big\langle
p^*\fG\otimes \cO(3H_3),
p^*\mathfrak{G}\otimes \cO(4H_3),
p^*\fG\otimes \cO(5H_3),
p^*\fG' \otimes \cO(6H_3)
\Big\rangle.
\end{multline}
%
The collection~\eqref{22} consists of the bundles of the form
\begin{equation*}
p^*S^j \mathcal{U}_2^{\vee}(kH_2)\otimes \mathcal{O}(iH_3),
\end{equation*}
where $i\in [3,6],$ $j\in [0,3]$, and $k\in [-6,1]$.
Below we compute the direct images under $q$ of these sheaves and check that most of them are contained in $\cD$.

\begin{lemma}
\label{direct image}
We have
\begin{equation*}
q_*(p^*S^j \mathcal{U}_2^{\vee}(kH_2)\otimes \mathcal{O}(iH_3))=
\left\{\begin{array}{rl}
\Sigma^{j+k,k}\mathcal{U}_3^{\vee}(iH_3), & \text{if}\ k+2>1;\\
\Sigma^{j-1,-k-2}\mathcal{U}_3^{\vee}((i+k+1)H_3)[-1], & \text{if}\ j+k+3>1>k+2;\\
\Sigma^{-k-3,j}\mathcal{U}_3^{\vee}((i+k+1)H_3)[-2], & \text{if}\ 1>j+k+3.
\end{array}\right.
\end{equation*}
\end{lemma}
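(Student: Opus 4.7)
The plan is to reduce the lemma to a direct application of the relative Borel--Bott--Weil formula (Proposition~\ref{45}). First, the projection $q$ sends a flag $(U_2, U_3)$ to $U_3$, and since every subspace of an isotropic space is automatically isotropic, this identifies $\mathrm{IFl}(2,3;V)$ with the relative Grassmannian $\Gr_{\IGr(3,V)}(2,\cU_3)$, under which $\cU_2$ becomes the tautological rank-$2$ subbundle. Combining $\cO(iH_3) = q^*\cO(i)$, the projection formula, and the identity $S^j\cU_2^\vee \otimes (\Lambda^2\cU_2^\vee)^k \cong \Sigma^{j+k,k}\cU_2^\vee$, the computation reduces to
\begin{equation*}
q_*\bigl(p^*S^j\cU_2^\vee(kH_2) \otimes \cO(iH_3)\bigr) \cong q_*(\Sigma^{j+k,k}\cU_2^\vee) \otimes \cO(i).
\end{equation*}

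Next I would apply Proposition~\ref{45} with the ambient bundle $\cU_3$ of rank~$3$, subbundle of rank~$2$, and weight $\beta = (j+k,k)$, so that $\alpha = (j+k, k, 0)$ and
\begin{equation*}
\alpha + \rho = (j+k+3,\, k+2,\, 1).
\end{equation*}
The inequality $j+k+3 > k+2$ is automatic from $j \ge 0$, so the only question is where $1$ sits among the other two entries. This produces exactly the three non-vanishing cases of the lemma, corresponding to $k+2 > 1$, to $j+k+3 > 1 > k+2$, and to $1 > j+k+3$; the remaining (boundary) situations force two entries of $\alpha + \rho$ to coincide and give vanishing pushforward.

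In each case one writes down the unique permutation $\sigma$ that arranges $\alpha+\rho$ in strictly decreasing order. In Case~1 one has $\sigma = \mathrm{id}$, $\sigma(\alpha+\rho) - \rho = (j+k,k,0)$, and the answer is $\Sigma^{j+k,k}\cU_3^\vee$. In Case~2 the minimal $\sigma$ is the transposition of positions~$2$ and~$3$, contributing $\ell(\sigma) = 1$, with $\sigma(\alpha+\rho) - \rho = (j+k, -1, k+1)$. In Case~3 the minimal $\sigma$ is the $3$-cycle moving $1$ to the front, contributing $\ell(\sigma) = 2$, with $\sigma(\alpha+\rho) - \rho = (-2, j+k+1, k+1)$. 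In Cases~2 and~3 the result has a negative last entry, which is brought into the form stated in the lemma by the twist identity $\Sigma^{\alpha_1+t,\alpha_2+t,\alpha_3+t}\cU^\vee \cong \Sigma^{\alpha_1,\alpha_2,\alpha_3}\cU^\vee \otimes \cO(t)$ applied with $t = k+1$; this rewrites the two nontrivial cases as $\Sigma^{j-1,-k-2}\cU_3^\vee \otimes \cO(k+1)$ and $\Sigma^{-k-3,j}\cU_3^\vee \otimes \cO(k+1)$ respectively. Tensoring each answer by $\cO(i) = \cO(iH_3)$ produces the stated formulas.

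This is essentially a routine Weyl-group bookkeeping exercise, so there is no substantial obstacle; the only mild care required is the rewriting of non-dominant Schur weights with negative entries as line-bundle twists of dominant ones, and keeping straight that the distinctness of $\alpha+\rho$ is what governs the case distinction.
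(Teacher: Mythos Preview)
Your proof is correct and follows essentially the same approach as the paper: reduce via the projection formula, then apply Proposition~\ref{45} to the weight $\alpha+\rho=(j+k+3,k+2,1)$ and sort out the three cases. You are in fact more explicit than the paper (writing out the permutations and the twist by $\cO(k+1)$), but the argument is identical.
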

\begin{proof}
By the projection formula we have
$q_*(p^*S^j \mathcal{U}_2^{\vee}(-kH_2)\otimes \mathcal{O}(iH_3))=
q_*p^*S^j \mathcal{U}_2^{\vee}(-kH_2) \otimes \mathcal{O}(iH_3)$.
So it is enough to compute $q_*p^*S^j \mathcal{U}_2^{\vee}(-kH_2).$

Following the recipe of Proposition~\ref{45} for $p^*S^j \mathcal{U}_2^{\vee}(kH_2)$
we consider the weight $\beta=(j+k,k;0;0)$ and obtain from it
\begin{equation*}
\alpha + \rho = (j+k+3,k+2,1).
\end{equation*}
Assume all its entries are distinct.
Since $j \ge 0$, we have $j+k+3 > k+2$, so this means that
\begin{equation*}
\text{either}\quad
k + 2 > 1,
\qquad
\text{or}\quad
j + k + 3 > 1 > k + 2,
\qquad
\text{or}\quad
1 > j + k + 3.
\end{equation*}
Applying in each case the appropriate permutation and subtracting $\rho$ as explained in Proposition~\ref{45},
we obtain the above result.
%
%
\end{proof}

\begin{corollary} \label{full}
Except possibly for the first two bundles in the first block
\begin{equation}
\label{eq:bad1}
 (\cO(-5H_2+3H_3),  \mathcal{U}_2^{\vee}(-5H_2+3H_3))
 \in \mathrm{D}_3
\end{equation}
and the last four bundles in the last block
\begin{equation}
\label{eq:bad2}
(\cO(6H_3), \mathcal{U}_2^{\vee}(6H_3), S^2\mathcal{U}_2^{\vee}(6H_3), \cO(H_2+6H_3))
\in \mathrm{D}_6,
\end{equation}
the direct images of all bundles from the collection~\eqref{22} lie in $\cD$.
\end{corollary}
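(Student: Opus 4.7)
The plan is a direct case-by-case computation. For each bundle of the form $p^*S^j\cU_2^\vee(kH_2)\otimes\cO(iH_3)$ appearing in \eqref{22}, I would apply Lemma~\ref{direct image} to obtain a shift of a twisted Schur bundle $\Sigma^{a,b}\cU_3^\vee(t)$, or zero if the entries of $\alpha+\rho=(j+k+3,\,k+2,\,1)$ fail to be pairwise distinct. The triple $(i,j,k)$ ranges over $i\in\{3,4,5\}$ with $(j,k)$ in the support of $\fG$, plus $i=6$ with $(j,k)$ in the support of $\fG'$.

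The next step is to match each nonzero output against the inventory of objects already known to lie in $\cD$. By the very definition~\eqref{eq:cd}, any twist $\fE(t)$ with $0\le t\le 5$ lies in $\cD$; this handles every output with weight $(a,b)\in\{(0,0),(1,0),(1,1),(2,0),(2,1)\}$ provided $t\in[0,5]$. The remaining weights are covered by the results of the previous subsection: Proposition~\ref{2} for $\Sigma^{2,2}\cU_3^\vee(t)$ with $-1\le t\le 3$, Proposition~\ref{10} for $\Sigma^{3,3}\cU_3^\vee(t)$ with $-1\le t\le 1$, and Corollary~\ref{4} for $\Sigma^{3,1}\cU_3^\vee(t)$ with $1\le t\le 5$ and $\Sigma^{3,2}\cU_3^\vee(t)$ with $-1\le t\le 2$. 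A routine run through all triples shows that, apart from the six exceptions in the statement, every nonzero pushforward falls within one of these ranges, and every remaining triple has pushforward zero.

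The exceptional bundles are then identified as the triples lying at the two extremes of the enumeration. In block $i=3$, the triples $(0,-5)$ and $(1,-5)$ produce $S^2\cU_3^\vee(-1)[-2]$ and $\Sigma^{2,1}\cU_3^\vee(-1)[-2]$ (via Case~3 of Lemma~\ref{direct image}), whose twist by $\cO(-1)$ lies one step below the allowed range; these correspond to the two bundles~\eqref{eq:bad1}. In block $i=6$ (using $\fG'$), the triples $(0,0),(1,0),(2,0),(0,1)$ produce $\cO(6),\cU_3^\vee(6),S^2\cU_3^\vee(6),\Lambda^2\cU_3^\vee(6)$ (via Case~1), whose twist by $\cO(6)$ lies one step above; these correspond exactly to~\eqref{eq:bad2}.

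The main obstacle is purely combinatorial bookkeeping at the boundaries. Once the inventory of subsection on ``Adding some objects'' is assembled, verifying the interior cases is mechanical; what requires care is confirming that at each extreme triple $(i,j,k)$ of $\fG$ or $\fG'$ the twist of the resulting Schur bundle genuinely stays within the scope of the applicable proposition. This is in particular why the last block uses $\fG'$ in place of $\fG$: without the shift, the pushforwards $\Sigma^{3,3}\cU_3^\vee(1)[-2]$, $\Sigma^{3,2}\cU_3^\vee(1)[-2]$ and $\Sigma^{3,1}\cU_3^\vee(1)[-2]$ at $j\in\{3,2,1\}$, $k=-6$ would not appear, and one would not be able to isolate $\fE(6)$ as the only remaining subcategory of obstructions.
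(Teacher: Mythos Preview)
Your proposal is correct and follows essentially the same approach as the paper: a direct case analysis via Lemma~\ref{direct image}, matching each resulting $\Sigma^{a,b}\cU_3^\vee(t)$ against the rectangular part of~\eqref{eq:cd} and the supplementary containments established in Propositions~\ref{2},~\ref{10} and Corollary~\ref{4}, with the six listed bundles emerging as the only ones whose twist falls outside the admissible range. Your closing commentary on the role of $\fG'$ is slightly misstated (the essential point is that replacing $\fG$ by $\fG'$ in the last block removes the pairs $(j,k)=(1,1),(2,1)$, whose pushforwards $\Sigma^{2,1}\cU_3^\vee(6)$ and $\Sigma^{3,1}\cU_3^\vee(6)$ would \emph{not} lie in $\fE(6)$), but this is auxiliary to the proof and does not affect its validity.
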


\begin{proof}
All bundles from the collection \eqref{22} have the form $p^*S^j \mathcal{U}_2^{\vee}(kH_2)\otimes \mathcal{O}(iH_3)$,
where $i\in[3,6]$, $j\in [0,3]$, and $k \in [-6,1]$ with various restrictions on possible triples $(i,j,k)$,
which we do not specify explicitly.
Note, however, that for $k \in [-2,1]$ we have $j \in [0,2]$.
The pushforwards of all these bundles are computed by Lemma~\ref{direct image},
so we only need to analyze its right hand side, and check when the corresponding objects lie in $\cD$.


To start with, consider the first line of Lemma~\ref{direct image}.
Here we have $k \ge 0$, hence $j \le 2$.
The corresponding objects $\Sigma^{j+k,k}\cU_3^\vee(iH_3)$ then belong to the rectangular part of~\eqref{eq:cd},
except for the cases
\begin{itemize}
\item $k = 1$, $j = 2$, $i \in [3,5]$, or
\item $i = 6$.
\end{itemize}
In the first case the corresponding bundles $\Sigma^{3,1}\cU_3^\vee(iH_3)$ belong to $\cD$ by Corollary~\ref{4}.
In the second case the inequality $k \ge 0$ gives the last four bundles in the last block of~\eqref{22},
so this is the case of~\eqref{eq:bad2}.


Next, consider the second line of Lemma~\ref{direct image}.
In this case we have $k \le -2$ and $k \ge - j - 1 \ge -4$.
If~$k \ge -3$ the corresponding objects $\Sigma^{j-1,-k-2}\mathcal{U}_3^{\vee}((i+k+1)H_3)$ belong to the rectangular part of~\eqref{eq:cd}.
On the other hand, when $k = -4$ we automatically have $j = 3$ and~$i \in [3,6]$,
and the corresponding bundles~$\Sigma^{2,2}\mathcal{U}_3^{\vee}((i+k+1)H_3)$ belong to $\cD$ by Proposition~\ref{2}.

Finally, consider the last line of Lemma~\ref{direct image}.
In this case again the corresponding objects~$\Sigma^{-k-3,j}\mathcal{U}_3^{\vee}((i+k+1)H_3)$
belong to the rectangular part of~\eqref{eq:cd}, except for the cases
\begin{itemize}
\item $k = -6$, $j \in [1,3]$, $i = 6$, or
\item $k = -5$, $j = 2$, or
\item $k = -5$, $j \in [0,1]$, $i = 3$.
\end{itemize}
In the first case the corresponding bundles are $\Sigma^{3,3}\cU^\vee(H_3)$, $\Sigma^{3,2}\cU^\vee(H_3)$, and~$\Sigma^{3,1}\cU^\vee(H_3)$,
and they belong to~$\cD$ by Proposition~\ref{10} and Corollary~\ref{4}.
In the second case the corresponding bundles $\Sigma^{2,2}\cU_3^\vee((i-4)H_3)$ belong to $\cD$ by Proposition~\ref{2} since $i\in[3,6]$.
The last case gives the first two bundles in the first block of~\eqref{22}, so this is the case of~\eqref{eq:bad1}.
%
\end{proof}

Now we are ready for the proof of the theorem.

\begin{proof}[Proof of Theorem~\textup{\ref{intro:main}}]

We prove that the category $\cD$ defined by~\eqref{eq:cd} is equal to $\Db(\IGr(3,V))$.
In view of Propositions~\ref{proposition:ec-t} and~\ref{proposition:ec} this will prove the theorem.

As we have seen above, the direct images of all bundles from the full exceptional collection~\eqref{22} in~$\mathrm{D}^{b}(\mathrm{IFl}(2,3,V))$
are contained in $\cD$ except possibly for the two collections~\eqref{eq:bad1} and~\eqref{eq:bad2}
that sit at the opposite ends of~\eqref{22}.
Applying Proposition~\ref{proposition:long-mutations}
we can move the subcollection~\eqref{eq:bad1} from the left end of~\eqref{22} to its right end,
twisting it by the anticanonical class of $\mathrm{IFl}(2,3,V)$ which is equal to~$\cO(-3H_2-4H_3)$.
The resulting collection can be written as
\begin{multline}
\label{eq:new-collection-ifl}
\Big\langle
p^*\fG_{[3,24]}\otimes \cO(3H_3),
p^*\fG\otimes \cO(4H_3),
p^*\fG\otimes \cO(5H_3),
p^*\fG'_{[1,20]} \otimes \cO(6H_3),\\
\cO(6H_3), \mathcal{U}_2^{\vee}(6H_3), S^2\mathcal{U}_2^{\vee}(6H_3), \cO(H_2+6H_3),
\cO(-2H_2+7H_3)), \mathcal{U}_2^{\vee}(-2H_2+7H_3)
\Big\rangle,
\end{multline}
where $\fG_{[3,24]}$ and $\fG'_{[1,20]}$ denote the subcollections
of the last 22 objects in~\eqref{21} and the first 20 objects in~\eqref{21prime}, respectively.


%
%
Let us denote by $\mathcal{C}$ the subcategory in  $\mathrm{D}^{b}(\mathrm{IFl}(2,3,V))$
generated by the second line in~\eqref{eq:new-collection-ifl}, i.e.,
\begin{equation*}
\mathcal{C}:= \langle \cO(6H_3), \mathcal{U}_2^{\vee}(6H_3), S^2\mathcal{U}_2^{\vee}(6H_3), \cO(H_2+6H_3),
\cO(-2H_2+7H_3)), \mathcal{U}_2^{\vee}(-2H_2+7H_3)  \rangle.
\end{equation*}
Then the first line is equal to $\mathcal{C}^\perp$, and we can rewrite~\eqref{eq:new-collection-ifl} as
\begin{equation*}
\mathrm{D}^b(\mathrm{IFl}(2,3,V))=\langle \mathcal{C}^{\perp}, \mathcal{C}  \rangle.
\end{equation*}
By Corollary \ref{full} we have
\begin{equation*}
q_*(\mathcal{C}^{\perp}) \subset \mathcal{D}.
\end{equation*}

Assume that the Lefschetz collection~\eqref{1} is not full.
Then there exists an object $G {} \in \Db(\IGr(3,V))$ left orthogonal to all bundles in the collection \eqref{1}, hence to $\cD$.
Therefore,
\begin{equation*}
\mathrm{Ext}^{\bullet}_{\mathrm{IGr}(3,V)}(G,q_*(\mathcal{C}^{\perp}))=0.
\end{equation*}
By adjunction
\begin{equation*}
\mathrm{Ext}^{\bullet}_{\mathrm{IFl}(2,3,V)}(q^*G,\mathcal{C}^{\perp}) = 0,
\end{equation*}
which means that $q^*G$ belongs to the full subcategory $\mathcal{C}$.
We conclude that
\begin{equation} \label{7}
G\simeq q_*q^*G\in q_*\mathcal{C}.
\end{equation}
By definition of $\mathcal{C}$ together with Lemma~\ref{direct image}, we have
\begin{equation*}
q_*\mathcal{C} =
\langle \cO(6H_3), \mathcal{U}_3^{\vee}(6H_3), S^2\mathcal{U}_3^{\vee}(6H_3), \Lambda^2\cU^{\vee}(6H_3)\rangle.
\end{equation*}
Note that this is contained in $\mathfrak{E}(6)$.
In particular, we conclude that
\begin{equation*}
G \in \mathfrak{E}(6).
\end{equation*}
At the same time, we have $G \in {}^\perp\mathfrak{E}$ by definition.
It remains to note that
\begin{equation} \label{6}
^{\perp}\mathfrak{E}\cap \mathfrak{E}(6)=0.
\end{equation}
Indeed, every nonzero object in $\mathfrak{E}(6)$ is of the form $E(6),$ where $0\ne E\subset \mathfrak{E}$,
and using Serre duality
\begin{equation*}
\mathrm{Hom}^{\bullet}(E(6),E)= \mathrm{Hom}^{\bullet}(E,E)^\vee \ne 0
\end{equation*}
we conclude that $E(6)$ can not lie in $^{\perp}\mathfrak{E}.$

Altogether, this proves that $G = 0$, hence the exceptional collection~\eqref{1} is full.
\end{proof}

%

\section{Applications}
\label{section:applications}

In this section we collect several simple consequences of Theorem~\ref{intro:main}.

\subsection{Exceptional collections on flag varieties of $\Sp(8)$}

The first application is straightforward.

\begin{theorem}
For any parabolic subgroup $P \subset \Sp(8)$ the flag variety $\Sp(8)$ has
a full exceptional collection of $\Sp(8)$-equivariant vector bundles.
\end{theorem}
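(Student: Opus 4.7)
The plan is as follows. First, I reduce to the case of maximal parabolic subgroups by the standard argument recalled in~\cite[Section~1.2]{1}. Namely, if $P\subset Q\subset\Sp(8)$ with $Q$ maximal, the projection $\Sp(8)/P\to\Sp(8)/Q$ is a smooth fibration whose fiber $Q/P$ is itself a flag variety of the Levi factor of $Q$. Iterating, every flag variety of $\Sp(8)$ can be realized as the total space of an iterated fibration over one of the four maximal isotropic Grassmannians $\IGr(k,8)$ with $k\in\{1,2,3,4\}$, the fibers being partial flag varieties of the general linear group (acting on the tautological rank-$k$ bundle on the base).

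Second, the case of maximal $P$ is already established: all four maximal isotropic Grassmannians admit full $\Sp(8)$-equivariant exceptional collections. For $\IGr(1,8)\cong\PP^7$ this is Beilinson's theorem~\cite{6}, for $\IGr(2,8)$ it is Kuznetsov's result~\cite{2}, for $\IGr(3,8)$ it is Theorem~\ref{intro:main} of the present paper, and for $\IGr(4,8)$ it is~\cite{11}.

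For non-maximal $P$, write $\Sp(8)/P=\mathrm{IFl}(i_1,\ldots,i_k;8)\to\IGr(i_k,8)$ with fiber the classical flag variety $\mathrm{Fl}(i_1,\ldots,i_{k-1};i_k)$. Kapranov's theorem~\cite{7}, in its relative form applied to the tautological rank-$i_k$ bundle $\cU_{i_k}$ on the base, produces an $\IGr(i_k,8)$-linear semiorthogonal decomposition of $\Db(\mathrm{IFl}(i_1,\ldots,i_k;8))$ whose pieces are each equivalent to $\Db(\IGr(i_k,8))$ and are indexed by Schur functors applied to the relative tautological flag inside $\cU_{i_k}$. Combining with the full equivariant exceptional collection on the base then yields a full $\Sp(8)$-equivariant exceptional collection on $\Sp(8)/P$; $\Sp(8)$-equivariance of every piece is automatic, since all building blocks are Schur functors of tautological bundles.

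Consequently, the entire nontrivial content of the theorem lies in the base case $\IGr(3,8)$, which is exactly Theorem~\ref{intro:main}; given that, the remaining assembly is essentially formal and presents no substantial obstacle.
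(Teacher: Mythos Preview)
Your proposal is correct and follows essentially the same approach as the paper: reduce to maximal parabolics via~\cite[Section~1.2]{1}, then invoke the four known cases $\IGr(k,8)$ for $k=1,2,3,4$ from~\cite{6},~\cite{2}, Theorem~\ref{intro:main}, and~\cite{11} respectively. The paper's proof is simply a terser version that cites the reduction without spelling out the fibration-plus-Kapranov mechanism you describe, but the content is the same.
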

\begin{proof}
By~\cite[Section~1.2]{1} it is enough to prove the theorem for any maximal parabolic subgroup.
In Theorem~\ref{intro:main} we established the case~$(C_4,P_3)$.
For the cases corresponding to $P_1$ and $P_2$, i.e., the cases of~$\mathbb{P}^7$ and $\mathrm{IGr}(2,8)$, see~\cite{6,2},
and for the case corresponding to $P_4$, i.e., $\mathrm{IGr}(4,8)$, see~\cite{11}.
\end{proof}

\subsection{Fractional Calabi–Yau categories}

By Theorem~\ref{intro:main} the exceptional collection
\begin{equation*}
(F,\mathfrak{E},\mathfrak{E}(1),\mathfrak{E}(2),F(3),\mathfrak{E}(3),\mathfrak{E}(4),\mathfrak{E}(5))
\end{equation*}
is a rectangular Lefschetz collection with respect to $\mathcal{O}(3)$.
Using the results of~\cite{10} we construct two new examples of fractional Calabi--Yau categories.

\begin{definition} [\cite{10}]
A triangulated category $\mathcal{T}$ is a fractional Calabi–Yau category if
it has a Serre functor $\mathbf{S}_{\mathcal{T}}$ and there are integers $p$ and $q\ne 0$ such that $\mathbf{S}_{\mathcal{T}}^q\simeq [p].$
\end{definition}

Denote by~$\mathcal{B}$
the first block~$\mathcal{B}:= (F,\mathfrak{E},\mathfrak{E}(1),\mathfrak{E}(2))$
of the above Lefschetz collection.

\begin{proposition}[{\cite[Theorem $3.5$ and Corollary $3.7$]{10}}]
Suppose that a map $f\colon Y \to \mathrm{IGr}(3,V)$ is a divisorial embedding
with the image $f(Y)$ being a divisor in the linear system $\cO(3)$.
Then the functor $f^*\colon \mathrm{D}^b(\mathrm{IGr}(3,V))\to \mathrm{D}^b (Y)$ is fully faithful on $\mathcal{B}$ and
induces a semiorthogonal decomposition
\begin{equation*}
\mathrm{D}^b (Y)=\langle \mathcal{A}_Y, \mathcal{B}_Y  \rangle,
\end{equation*}
where $\mathcal{B}_Y= f^*\mathcal{B}$ and $\mathcal{A}_Y$ is the orthogonal subcategory.
Moreover, $\mathcal{A}_Y$ is a fractional Calabi–Yau category with the Serre functor $\mathbf{S}_{\mathcal{A}_Y}=[9].$
\end{proposition}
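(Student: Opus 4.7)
The plan is to derive this statement as a direct application of Kuznetsov's theorem on Lefschetz decompositions and divisorial subvarieties, namely \cite[Theorem~3.5 and Corollary~3.7]{10}, using the rectangular Lefschetz structure established in Theorem~\ref{intro:main}. Since the essential work has already been done in Theorem~\ref{intro:main}, the proof should amount to a verification of hypotheses followed by a numerical computation of the Serre shift.

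First I would identify the Lefschetz data and check the hypotheses of~\cite[Thm.~3.5]{10}. By Theorem~\ref{intro:main} we have a rectangular Lefschetz decomposition $\mathrm{D}^{b}(X) = \langle \mathcal{B}, \mathcal{B}(3) \rangle$ with respect to $\mathcal{O}(3)$ with $m = 2$ blocks, and by~\eqref{eq:kx} the anticanonical class is $\omega_X^{-1} = \mathcal{O}(6) = \mathcal{O}(3)^{\otimes m}$, so the Lefschetz collection exactly saturates the anticanonical class. Since $Y \in |\mathcal{O}(3)|$ is smooth, the divisor class coincides with the Lefschetz twist; this is precisely the framework of the cited theorem. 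Applying it yields both the full faithfulness of $f^*|_{\mathcal{B}}$ and the semiorthogonal decomposition $\mathrm{D}^b(Y) = \langle \mathcal{A}_Y, \mathcal{B}_Y \rangle$ with $\mathcal{B}_Y = f^*\mathcal{B}$; the fact that only a single copy of $\mathcal{B}_Y$ appears (rather than two) reflects that the restricted second block $f^*\mathcal{B}(3)$ gets absorbed into $\mathcal{A}_Y$ via Serre duality on $Y$, because $\omega_Y = \mathcal{O}_Y(-3)$ matches exactly one Lefschetz twist.

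Next I would apply \cite[Cor.~3.7]{10} to identify the Serre functor of $\mathcal{A}_Y$. In general Kuznetsov's corollary expresses $\mathbf{S}_{\mathcal{A}_Y}$ as the composition of a rotation autoequivalence (whose order governs the fractional-CY denominator) and a shift. In our specific case, since $m = 2$ and the divisor degree equals the Lefschetz twist $d = 3$, the rotation part becomes trivial and $\mathbf{S}_{\mathcal{A}_Y}$ is a pure shift. Using $\dim Y = 11$ and $\omega_Y = \mathcal{O}_Y(-3)$, together with the fact that on $\mathcal{A}_Y$ the twist $(-)\otimes\mathcal{O}_Y(-3)$ acts as $[-2]$ (a direct consequence of the two-block rectangular structure on $X$ combined with~\eqref{eq:kx}), the formula of~\cite[Cor.~3.7]{10} gives $\mathbf{S}_{\mathcal{A}_Y} = [\dim Y - 2] = [9]$. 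Hence $\mathcal{A}_Y$ is an honest Calabi--Yau category of dimension~$9$.

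The only point requiring a moment of thought is whether the non-standard first object $F$ in the block $\mathcal{B}$ causes any difficulty when invoking~\cite[Thm.~3.5]{10}. Since that theorem is stated for an arbitrary rectangular Lefschetz decomposition with respect to a line bundle, and Theorem~\ref{intro:main} establishes rectangularity of the whole collection with respect to $\mathcal{O}(3)$ regardless of the internal composition of $\mathcal{B}$, no additional argument is needed. Thus the proof is genuinely routine, with the main effort lying in the numerical bookkeeping that produces the shift~$[9]$.
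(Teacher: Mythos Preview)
Your proposal is correct and follows exactly the approach of the paper: the proposition is stated there without proof, as a direct citation of \cite[Theorem~3.5 and Corollary~3.7]{10} applied to the rectangular Lefschetz decomposition with respect to $\mathcal{O}(3)$ from Theorem~\ref{intro:main}. You have simply spelled out the verification of hypotheses and the numerical computation of the shift in more detail than the paper does; one small expository inaccuracy is your description of the second block being ``absorbed into $\mathcal{A}_Y$'' --- in Kuznetsov's framework the point is rather that for a divisor in the Lefschetz class only $m-d=1$ copies of $\mathcal{B}_Y$ survive --- but this does not affect the argument.
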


\begin{proposition}[{\cite[Theorem $3.5$ and Corollary $3.8$]{10}}]
Suppose that a map $f\colon Y \to \mathrm{IGr}(3,V)$ is a double covering branched in a divisor in the linear system  $\cO(6)$.
Then the functor $f^*\colon \mathrm{D}^b(\mathrm{IGr}(3,V))\to \mathrm{D}^b (Y)$ is fully faithful on $\mathcal{B}$ and
induces a semiorthogonal decomposition $$\mathrm{D}^b (Y)=\langle \mathcal{A}_Y, \mathcal{B}_Y  \rangle, $$
where $\mathcal{B}_Y= f^*\mathcal{B}$ and $\mathcal{A}_Y$ is the orthogonal subcategory.
Moreover, $\mathcal{A}_Y$ is a fractional Calabi–Yau category with the Serre functor
$\mathbf{S}_{\mathcal{A}_Y}=\tau[11] $, where $\tau$ is the involution of the covering.
\end{proposition}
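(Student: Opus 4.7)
The plan is to deduce the proposition directly from \cite[Theorem 3.5 and Corollary 3.8]{10}, using the rectangular Lefschetz decomposition
\begin{equation*}
\Db(\IGr(3,V)) = \langle \mathcal{B}, \mathcal{B}(3) \rangle,
\end{equation*}
with $\mathcal{B} = \langle F, \mathfrak{E}, \mathfrak{E}(1), \mathfrak{E}(2) \rangle$, from Theorem~\ref{intro:main} as the main input. This collection has $m = 2$ blocks with Lefschetz twist $\cO(d)$ for $d = 3$; combined with $\omega_X = \cO(-6) = \cO(-md)$ from~\eqref{eq:kx} and $\dim X = 12$, this places us exactly in the framework of \cite[Section 3]{10}.

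First I would record the geometric data of the cover. For the double cover $f\colon Y \to \IGr(3,V)$ branched over a divisor in $|\cO(2d)| = |\cO(6)|$, one has
\begin{equation*}
f_*\cO_Y \cong \cO_X \oplus \cO_X(-d), \qquad \dim Y = 12, \qquad \omega_Y = f^*\cO(-d) = f^*\cO(-3).
\end{equation*}
Next, to check full faithfulness of $f^*$ on $\mathcal{B}$, I would use adjunction together with the above splitting of $f_*\cO_Y$ to reduce the claim to the vanishing
\begin{equation*}
\operatorname{Ext}^\bullet_X(E, E'(-d)) = 0 \qquad \text{for all } E, E' \in \mathcal{B},
\end{equation*}
which is precisely the semiorthogonality of the two blocks in the rectangular Lefschetz collection. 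The same vanishing, together with the decomposition $\Db(X) = \langle \mathcal{B}, \mathcal{B}(d) \rangle$, then yields the semiorthogonal decomposition $\Db(Y) = \langle \mathcal{A}_Y, \mathcal{B}_Y \rangle$ with $\mathcal{B}_Y = f^*\mathcal{B}$.

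For the Serre functor formula, the plan is to invoke the general computation of~\cite[Corollary 3.8]{10}. The point is that $\cO(d)|_Y$ plays the role of a half-anticanonical class on $Y$, so the Serre functor of $Y$ twists by it; in the orthogonal complement $\mathcal{A}_Y$ this twist collapses into the deck involution $\tau$, which interchanges the two summands of $f_*\cO_Y$, while the remaining shift is $\dim Y - 1 = 11$. Substituting $\dim Y = 12$, $d = 3$, $m = 2$ into the formula of~\cite[Corollary 3.8]{10} gives $\bS_{\mathcal{A}_Y} = \tau[11]$.

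The only nontrivial step is verifying the vanishing hypothesis of \cite[Theorem 3.5]{10}; as indicated above it is immediate from the rectangular structure of Theorem~\ref{intro:main}, so no further computation on $Y$ itself is required. Consequently, the proposition reduces cleanly to the cited general theorem once the Lefschetz data of $X$ is in place.
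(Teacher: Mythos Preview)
Your proposal is correct and matches the paper's approach: the paper gives no proof at all for this proposition, simply citing \cite[Theorem~3.5 and Corollary~3.8]{10} in the statement header and relying on the rectangular Lefschetz decomposition of Theorem~\ref{intro:main} (with $m=2$ blocks and twist $\cO(3)$) together with $\omega_X=\cO(-6)$ and $\dim X=12$ as the input. Your writeup just makes explicit the verification that the hypotheses of the cited results are satisfied.
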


\subsection{Residual category}

In this subsection we compute the residual category for the first two exceptional collections of Theorem~\ref{intro:main}.
Let us first recall the definition.

\begin{definition}[\cite{KS}]
Let $E_\bullet$ be a Lefschetz exceptional collection in $\Db(X)$ with respect to a line bundle~$\cO_X(1)$
with the support partition~$\lambda$ (see Definition~\ref{def:lefschetz}).
The subcategory of $\mathrm{D}^{b}(X)$ orthogonal to its rectangular part is called its \textbf{residual category}:
$$\mathcal{R}=\langle E_{1}, E_{2},\ldots,E_{\lambda_{i-1}},E_{1}(1), E_{2}(1),\ldots,E_{\lambda_{i-1}}(1),\ldots,E_{1}(i-1), E_{2}(i-1),\ldots,E_{\lambda_{i-1}}(i-1) \rangle^{\perp}.$$
\end{definition}

The next theorem supports Conjecture 1.11 from \cite{KS}.

\begin{theorem} \label{residual}
The residual category $\cR$ for the Lefschetz decomposition
\begin{equation}
\label{47}
\Db(\IGr(3,V)) = \langle F,\mathfrak{E}, F(1),\mathfrak{E}(1),\mathfrak{E}(2),\mathfrak{E}(3),\mathfrak{E}(4),\mathfrak{E}(5)\rangle
\end{equation}
is generated by two completely orthogonal exceptional objects.
\end{theorem}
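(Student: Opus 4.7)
The plan is to identify $\cR$ explicitly and then verify complete orthogonality of its two generators. The Lefschetz collection~\eqref{47} has support partition $(6,6,5,5,5,5)$, with rectangular part $\cR^{\mathrm{rect}} := \langle \fE, \fE(1), \fE(2), \fE(3), \fE(4), \fE(5)\rangle$ of rank $30$; since $K_0(\IGr(3,V))$ has rank $32$, the residual category $\cR = (\cR^{\mathrm{rect}})^\perp$ has rank $2$. I would produce the two generators by a single left mutation: set $\tilde F := \LL_\fE(F(1))$. Using $\mathrm{Hom}^\bullet(\fE(i), F(1))=0$ for $i \ge 1$ (from the semiorthogonality of~\eqref{47}) together with $\mathrm{Hom}^\bullet(\fE(i), \fE) = 0$ for $i \ge 1$, one checks $\tilde F \in \cR$, while $F \in \cR$ is automatic since $F$ is the leftmost object of~\eqref{47}. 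The reorganized SOD
$$\Db(\IGr(3,V)) = \langle F, \tilde F, \fE, \fE(1), \fE(2), \fE(3), \fE(4), \fE(5)\rangle$$
then exhibits $\cR = \langle F, \tilde F\rangle$ as an exceptional pair, with exceptionality of both and the semi-orthogonality $\mathrm{Ext}^\bullet(\tilde F, F)=0$ automatic.

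The main content is then complete orthogonality: $\mathrm{Ext}^\bullet(F, \tilde F)=0$. Applying $\mathrm{Hom}(F, -)$ to the defining triangle $\mathrm{Ext}^\bullet(\fE, F(1)) \otimes \fE \to F(1) \to \tilde F$ of $\tilde F$ reduces this vanishing to showing that the natural composition map
$$\mathrm{Ext}^\bullet(F, \fE) \otimes \mathrm{Ext}^\bullet(\fE, F(1)) \longrightarrow \mathrm{Ext}^\bullet(F, F(1))$$
is an isomorphism of graded vector spaces; equivalently, every Ext-class from $F$ to $F(1)$ should arise uniquely by composition through the five-object block $\fE$.

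The main obstacle is to verify this isomorphism. My approach is to compute each of the three sides explicitly using: (i) the short exact sequence $0\to F\to T\to \Sigma^{2,1}\cU^\vee(-1)\to 0$ together with its $\cO(1)$-twist, which reduces the Ext-groups involving $F$ and $F(1)$ to Ext-groups involving $T$, $T(1)$, and twists of $\Sigma^{2,1}\cU^\vee$; (ii) the bicomplex presentation of $T$ from Section~\ref{section:exact}, which realizes $T$ through staircase-type building blocks and further reduces to Ext-groups between irreducible $\Sp(V)$-equivariant bundles; and (iii) the sharp Borel--Bott--Weil vanishings of Corollary~\ref{11} combined with the targeted Ext-computations of Section~\ref{section:db} (Lemmas~\ref{19}, \ref{18}, \ref{lemma:32}, \ref{T1ort}, \ref{21(-1)}, \ref{all31}, \ref{31(3)}). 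I expect most Ext-summands to vanish immediately by the sharp vanishing criteria, and the surviving contributions on both sides of the composition map to match via a unique-up-to-scalar $\Sp(V)$-equivariance argument of the type used in the proof of Proposition~\ref{bicomplex}. This would yield the desired isomorphism and hence the complete orthogonality, completing the proof.
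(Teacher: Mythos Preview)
Your setup is identical to the paper's: you correctly identify $\cR = \langle F, \tilde F\rangle$ with $\tilde F = \LL_{\fE}(F(1))$, and you correctly reduce the problem to showing $\mathrm{Ext}^\bullet(F, \tilde F) = 0$. However, your proposed route to this vanishing --- a direct computation of $\mathrm{Ext}^\bullet(F, F(1))$ and the composition map through $\fE$ via the bicomplex and Borel--Bott--Weil --- is not carried out and would be laborious; the ``I expect'' language signals you have not checked that the surviving terms actually match, and verifying that the composition map is an \emph{isomorphism} (not merely a dimension count) by equivariance is delicate when several irreducible summands are in play.

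The paper bypasses all of this with a one-line application of Corollary~\ref{FF}. Twisting the identity $\LL_{\fE(-2),\fE(-1)}(F) \cong F(-2)[4]$ by $\cO(1)$ gives $\LL_{\fE(-1),\fE}(F(1)) \cong F(-1)[4]$, hence
\[
\tilde F = \LL_{\fE}(F(1)) \cong \RR_{\fE(-1)}(F(-1))[4].
\]
Now $\mathrm{Ext}^\bullet(F, \tilde F)$ is computed from $\mathrm{Ext}^\bullet(F, F(-1))$ and $\mathrm{Ext}^\bullet(F, \fE(-1))$, both of which vanish immediately: they are $\cO(-1)$-twists of the semiorthogonalities $\mathrm{Ext}^\bullet(F(1), F) = 0$ and $\mathrm{Ext}^\bullet(F(1), \fE) = 0$ already present in the exceptional collection~\eqref{47}. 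So the complete orthogonality is a formal consequence of the structural result Corollary~\ref{FF} (equivalently Proposition~\ref{T}) that has already been established, and no new cohomology computation is needed.
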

\begin{proof}
By definition, $\cR$ is the orthogonal to the collection $\fE, \fE(1),\fE(2),\fE(3),\fE(4),\fE(5)$ in $\Db(\IGr(3,V))$.
Therefore, it is generated by the exceptional pair $(F, \mathbb{L}_{\mathfrak{E}}F(1))$, i.e.,
\begin{equation*}
\mathcal{R} \simeq \langle F,\mathbb{L}_{\mathfrak{E}}F(1)\rangle.
\end{equation*}
So, it remains to prove that this pair is completely orthogonal.
One semiorthogonality is evident.
Furthermore, by Corollary~\ref{FF} we have $\mathbb{L}_{\mathfrak{E}}F(1)\cong\mathbb{R}_{\mathfrak{E}(-1)}F(-1)$ up to a shift.
Since~\eqref{47} is an exceptional collection we have $\mathrm{Ext}^{\bullet}(F,\mathfrak{E}(-1))=0$ and $\mathrm{Ext}^{\bullet}(F,F(-1))=0,$ so we conclude that~$\mathrm{Ext}^{\bullet}(F,\mathbb{L}_{\mathfrak{E}}F(1))=\mathrm{Ext}^{\bullet}(F,\mathbb{R}_{\mathfrak{E}(-1)}F(-1))=0$ and we get the statement.
\end{proof}


\end{document}